\newcommand{\mathbbm}[1]{\text{\usefont{U}{bbm}{m}{n}#1}}
\theoremstyle{thmstyleone}%
\newtheorem{theo}{Theorem}
\newtheorem{asum}{Assumption}%
\newtheorem{defi}{Definition}%
\newtheorem{lem}{Lemma}%
\theoremstyle{thmstyletwo}%
\theoremstyle{thmstylethree}%
\begin{document}

\title[Article Title]{Strong convergence of a semi tamed scheme for stochastic differential algebraic equation under  non-global Lipschitz coefficients.}


\author[1]{\fnm{Guy} \sur{Tsafack}}\email{Guy.Tsafack@hvl.no (guytsafack4@gmail.com)}

\author[1,2]{\fnm{Antoine } \sur{Tambue}}\email{antoine.tambue@hvl.no (antonio@aims.ac.za)}


\affil[1]{\orgdiv{Department of Computing Mathematics and Physics}, \orgname{Western Norway University of Applied Sciences}, \orgaddress{\street{Inndalsveien 28},  \postcode{5063 Bergen}, \state{} \country{Norway}}}

\affil[2]{\orgdiv{The Department of Mathematics \& Applied Mathematic}, \orgname{University of Cape
Town}, \orgaddress{\street{Private Bag }, \postcode{7701 Rondebosch} , \city{Cape Town},\country{ South Africa}}}




\abstract{ We are investigating the first  strong convergence  analysis  of a numerical method for  stochastic differential algebraic equations (SDAEs) under a non-global Lipschitz setting. It is well known that the explicit Euler scheme fails to converge strongly to the exact solution of a stochastic differential equation (SDEs) when at least one of the coefficients grows superlinearly. The problem becomes more challenging in the case of stochastic differential-algebraic equations (SDAEs) due to the singularity of the matrix. To address this, 
    we build a new scheme  called  the semi-implicit tamed method for  SDAEs and provide its strong convergence result under non-global Lipschitz setting.  In other words, the linear component of the drift term  is approximated implicitly, whereas its nonlinear component is tamed and approximated explicitly. We show that this method  strongly converges with order $\frac{1}{2}$ to the exact solution. To prove this strong convergence  result, we first derive an equivalent scheme,  that we call the  dual tamed scheme, which is more suitable for mathematical analysis and is associated with the inherent stochastic differential equation obtained by eliminating the constraints from the original SDAEs. To demonstrate the effectiveness of the proposed scheme, numerical simulations are performed, confirming that the theoretical findings are consistent with the numerical results.}

\keywords{ Index-1 stochastic differential algebraic equations, non-linear and non-global Lipschitz coefficients, Tamed method, strong convergence.}



\maketitle
\section{Introduction}
	We are interested in the numerical method for solving the following stochastic differential algebraic
	equation: 
	\begin{equation}\label{equa1}
		A(t)dX(t)=\left[B(t)X(t)+f(t,X(t))\right]dt + g(t,X(t))dW(t), \quad t\in \left[0,T\right] ,\quad X(0)=X_0.
	\end{equation}
	Here $T>0,\quad T\ne \infty$, $ A\in \mathbb{R}^{d\times d}$ is a singular matrix, $B\in  \mathbb{R}^{d\times d} $ is a matrix.
	The functions  $f:\left[0,T\right]\times\mathbb{R}^d  \to \mathbb{R}^d  $ (drift) and $g:\left[0,T\right]\times\mathbb{R}^d \to \mathbb{R}^{d\times m} $ (diffusion) are Borel functions on $\left[0,T\right]\times\mathbb{R}^d$.
    The process $W(\cdot)$ is an m-dimensional  Wiener process defined on the probability space $(\Omega, \mathcal{F},\mathbb{P})$ with the natural filtration  $\left( \mathcal{F}_t\right)_{t\geq 0} $.\\
	The unknown function  $X(\cdot)$ is a vector-valued stochastic process of dimension  $d$ and depends both on time $ t\in \left[0,T \right] $ and a sample $\omega \in \Omega$. To ease the notation, the argument $\omega$ is omitted.\\

    Differential-algebraic equations (DAEs) are frequently used to model the dynamic behavior of mechanical multi-body systems, electric circuits, and systems in process engineering, see for example \cite{daoutidis2015daes} and references therein.
    Today, with the influence of some external forces in the systems, those models have been updated by using the stochastic differential-algebraic equations (SDAEs) \cite{kupper2012runge,Renate,tsafack2025pathwise}. Indeed, the SDAEs are the general form of the stochastic differential equations (SDEs). As for SDEs, there is no hope to find an analytical or exact solution for realistic SDAEs, therefore numerical algorithms are currently the only alternative techniques that can provide approximated solutions.
    It is well known that the explicit Euler scheme and semi-implicit Euler-type schemes do not converge strongly to the exact solution of SDEs when at least one of the coefficients $f$ and $g$ grows superlinearly (see, for example,\cite{hutzenthaler2013divergence, hutzenthaler2011strong}). 
    Implicit Euler-type schemes for SDEs are very stable and converge strongly when at least  one the functions $f$ and $g$ grows superlinearly. Such implicit schemes require the resolution of  systems of nonlinear algebraic equations at every time step,  which is very costly, mostly in high dimension. Recently explicit schemes based on tamed techniques have been developed and analysed strongly under non-global Lipschitz setting (see for example \cite{hutzenthaler2012strong, tambue2019strong} and references therein).  In contrast to SDEs where numerous algorithms have been developed and analyzed strongly under non-global Lipschitz coefficients,  numerical algorithm that converges strongly for SDAEs under non-global Lipschitz setting has been lacked in the literature to the best of our knowledge.
   
 Recently, the pathwise convergence  for SDAEs has been established using a semi-implicit approach under non-global Lipschitz conditions \cite{tsafack2025pathwise}. Unfortunately, pathwise convergence is not enough in many applications \cite{hutzenthaler2012strong}, and many numerical schemes that converge pathwise for SDEs  may not converge strongly. 
The goal of this paper is to construct and analyze the first numerical algorithm for SDAEs of type \eqref{equa1} that converges strongly  under non-global Lipschitz coefficients. 
 Our novel scheme is based on semi-tamed technique \cite{hutzenthaler2012strong, tambue2019strong}. Indeed the linear component of the drift term  is approximated implicitly, whereas its nonlinear component is tamed and approximated explicitly. We achieved the standard strong order rate $\frac{1}{2}$.
Note that singular matrices in
these equations pose considerable challenges for theoretical analysis and simulations. We have developed a tamed method based on the semi-implicit Euler scheme to overcome those difficulties. In fact, the tamed technique is applied only to the nonlinear part of the drift of our SDAEs, while the linear part is implicitly approximated with the main duty of covering the singularity of the matrix $A(t)$.     

This paper is organized as follows. In the second section, we present key definitions and important assumptions that will be useful in our analysis. The third section focuses on our novel numerical scheme based on the semi-implicit tamed technique. In the fourth section, we provide the strong convergence of our novel numerical method. Finally, we end the paper by presenting some numerical results in order to verify our theoretical results.

\section{Notations, assumptions, and definitions}
 Throughout this work,  $(\Omega, \mathcal{F},\mathbb{P})$ denotes a complete probability space with the natural 
filtration $\left( \mathcal{F}_t\right)_{t\geq 0} $. We define,  $\left\|x \right\|^2=\sum_{i=1}^{d}\left| x_i\right|^2  $ , for any vector $x\in \mathbb{R}^d$ and   $\left| B \right|_{1}=\max_{j}\sum_{i}\left|b_{i,j}\right| $ is the matrix norm, for any matrix $B=(b_{i,j})_{i,j=1}^{d, m}$. 

		\begin{defi}\label{def1}
			A strong solution of \eqref{equa1}  is a process $X(\cdot) = (X(t))_{t\in \left[0,T \right] }$ with continuous sample paths that respects the following conditions
			\begin{itemize}
				\item [(i)]  $X(\cdot)$ is adapted to the filtration $\left\lbrace \mathcal{F}_t\right\rbrace_{t\in \left[0,T\right] } $,
				\item [(ii)] $\int_{0}^t|f_i(s,X(s))|ds< \infty$ a.s, for all   $ i=1,..., d$,  $t\in \left[0,T\right]$,
				\item[ (iii)] $\int_{0}^tg_{ij}^2(s,X(s))dW_j(s)< \infty$ a.s, for all  $i=1,...,d; j=1,...,m$, $t\in \left[0,T\right]$,
				\item[(iv)]  $X(\cdot) = (X(t))_{t\in \left[0,T \right] }$ satisfies the following equation
				\begin{align}
					\label{reps}
					A(t)X(t)&=A(0)X_0+\int_0^tA'(s)X(s)+B(s)X(s)+f(s, X(s))ds\nonumber\\
					&+\int_0^tg(s, X(s))dW(s).
				\end{align} 
			\end{itemize}		
		\end{defi} 
		Note that for having the representation \eqref{reps}, we have applied the It\^o lemma to the  function $k(t,x)=A(t)x$. This means that $dk(t,x)=A'(t)xdt+A(t)dx$.\\
        
        Note also that any SDAEs can be split to an algebraic equations (AEs) and a stochastic differential equations (SDEs).
        \begin{defi}\label{def2}
        The SDAEs \eqref{equa1} is said to be of index-1 if we have the following conditions:
        \begin{itemize}
				\item [(i)] $Img(t,X)\subseteq ImA(t)$, for all $X\in \mathbb{R}^d$ and $t\in [0,T]$. This ensures that  the noise sources do not appear into the
constraints AEs.
                \item [(ii)]  the constraints AEs are globally uniquely solvable.
                			\end{itemize}	
        \end{defi} 
        The well posedness of the SDAE \eqref{equa1} is investigated under the following hypothesis.
        \newpage
 \begin{asum}\label{asum1} We assume that
	\begin{enumerate} 
		\item[(A.1)] The function $f$  satisfies the local  Lipschitz condition i.e. for all $q\geq 1$ there exists $L_q>0$ such that
		\begin{equation*}
			\left\| f(X)-f(Y)\right\| \leq L_q \left\|Y- X\right\|,~\forall  X, Y\in \mathbb{R}^d ~with ~ \left\|X \right\|\lor \left\|Y \right\|<q ;
		\end{equation*}
and $g$ satisfies the global Lipschitz condition
			\begin{equation*}
			\left| g(X)-g(Y)\right|_F \leq L \left\|Y- X\right\|, 	~\forall   X, Y\in \mathbb{R}^d.~~~~~~~~~~~~~~~~~~~~~~~~~~~
		\end{equation*}

\item[(A.2)]	The function $f$ satisfies the one side Lipschitz condition: For $X,Y\in \mathbb{R}^d$ there exists $C\geq 0$ such that
	\begin{equation*}
	\left\langle X-Y, f(X)-f(Y)\right\rangle \leq C\left\| X-Y\right\| ^2~~~~~~~~~~~~~~~~~~~~~~~~~~~~~~~~~~
		\end{equation*}
	
	\item[(A.3)] 	The function $f$ satisfies the following super-linear growth condition: there exist $C\geq 0 $ and $c\geq 0 $ such that
		\begin{equation*}
		\left\| f(X)-f(Y)\right\|  \leq C\left(1+\left\| X\right\|^c+\left\| Y\right\|^c \right) \left\| X-Y\right\|, ~X,Y\in \mathbb{R}^d~~~~
	\end{equation*}
	\item[(A.4)] There exists $M>0$   such that $\mathbb{E}\left\|X_0 \right\|^2\leq M $

		\end{enumerate}
  \end{asum}
\begin{theo}\label{theo1}
	We assume that the SDAE \eqref{equa1} is an index 1 SDAEs and the conditions (A.1) and  (A.2)  of Assumption \ref{asum1} are satisfied,   then  the SDAE \eqref{equa1} has	a unique solution that satisfies the relation
	 \begin{equation}
	 	\mathbb{E}(\|X(t)\|^2)\leq C, ~t\in [0,T],~~~~~~~~~~~~~~~~~~~~~~~~~~~~~~~~~~~~~~~~~~~~~~~
	 \end{equation}
    where $C$ is a  positive constant.
\end{theo}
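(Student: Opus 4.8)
The plan is to exploit the index-1 structure of \eqref{equa1} in order to reduce it to an \emph{inherent} stochastic differential equation living on a lower-dimensional subspace, and then to invoke the classical existence, uniqueness and moment theory for SDEs with one-sided Lipschitz drift. Since $A(t)$ is a singular matrix of constant rank, one can choose a continuously differentiable projector-valued map $Q(\cdot)$ with $\mathrm{Im}\,Q(t)=\ker A(t)$ and set $P(t)=I-Q(t)$; write $X(t)=u(t)+v(t)$ with $u(t)=P(t)X(t)$ and $v(t)=Q(t)X(t)$. Let moreover $\Pi(t)$ be a projector onto a complement of $\mathrm{Im}\,A(t)$, so that $\Pi(t)A(t)=0$. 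Applying $\Pi(t)$ to \eqref{equa1} and using condition (i) of Definition \ref{def2} — which forces $\Pi(t)g(t,X)=0$ because $\mathrm{Im}\,g(t,X)\subseteq\mathrm{Im}\,A(t)=\ker\Pi(t)$ — removes both the $A(t)dX$ contribution and the stochastic integral, leaving the purely algebraic constraint $\Pi(t)\bigl[B(t)X(t)+f(t,X(t))\bigr]=0$.

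The next step is to use condition (ii) of Definition \ref{def2}: the global unique solvability of this constraint lets one solve $\Pi(t)\bigl[B(t)(u+v)+f(t,u+v)\bigr]=0$ for $v$ as a function $v=h(t,u)$, which inherits from (A.1) and (A.2) continuity, local Lipschitz dependence on $u$, and at most linear growth in $u$. Substituting $X=u+h(t,u)$ into the remaining (differential) part of \eqref{equa1} — obtained, after premultiplication by an appropriate nonsingular matrix, by applying $I-\Pi(t)$ — produces a closed SDE for $u$,
\begin{equation*}
	du(t)=\bigl[\widetilde{B}(t)u(t)+\widetilde{f}(t,u(t))\bigr]dt+\widetilde{g}(t,u(t))\,dW(t),\qquad u(0)=P(0)X_0,
\end{equation*}
where $\widetilde{B},\widetilde{f},\widetilde{g}$ are assembled from $B,f,g,h,P,Q$ and $A'$. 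The crucial verifications are that $\widetilde{f}$ keeps the local Lipschitz property coming from (A.1) and a one-sided Lipschitz estimate $\langle u-\bar u,\widetilde{f}(t,u)-\widetilde{f}(t,\bar u)\rangle\le\widetilde{C}\,\|u-\bar u\|^2$ coming from (A.2), while $\widetilde{g}$ remains globally Lipschitz; the latter holds because $g$ is globally Lipschitz by (A.1) and because $\mathrm{Im}\,g\subseteq\mathrm{Im}\,A$ prevents the constraint component $h$ from injecting any noise into the reduced equation.

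With the inherent SDE established, the existence and uniqueness of a global strong solution $u$ follow from the standard theory for SDEs whose coefficients are locally Lipschitz with a one-sided Lipschitz (monotone) drift and a globally Lipschitz diffusion. For the moment bound I would apply It\^o's formula to $t\mapsto\|u(t)\|^2$, take expectations, bound the drift term using the monotonicity of $\widetilde{f}$ together with bounds on $\widetilde{f}(t,0)$ and $\widetilde{B}$, bound the diffusion term by the global Lipschitz growth of $\widetilde{g}$, and conclude with Gronwall's inequality using $\mathbb{E}\|X_0\|^2<\infty$ (cf. (A.4)); this yields $\sup_{t\in[0,T]}\mathbb{E}\|u(t)\|^2\le C$. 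Finally, since $X(t)=u(t)+h(t,u(t))$ with $h$ of linear growth, $\mathbb{E}\|X(t)\|^2\le 2\,\mathbb{E}\|u(t)\|^2+2\,\mathbb{E}\|h(t,u(t))\|^2\le C$, which is the assertion, and uniqueness of $X$ transfers from that of $u$ through the same reconstruction. I expect the main obstacle to be the middle step: making rigorous the solution operator $u\mapsto h(t,u)$ of the \emph{nonlinear} algebraic constraint (nonlinear because $f$ is) and proving that it transports the local and one-sided Lipschitz structure of $f$ to $\widetilde{f}$; once this is in place, the remaining arguments are routine SDE estimates.
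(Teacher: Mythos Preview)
Your outline is sound and essentially reconstructs, from scratch, the machinery that the paper outsources to a reference. The paper's own proof is much more compressed: it does \emph{not} pass to the inherent SDE at all, but instead verifies a single monotone-growth inequality on the original coefficients after premultiplication by the pseudo-inverse $A^-$, namely
\[
\langle (A^-(t)A(t)X)^T,\,A^-(t)(B(t)X+f(X))\rangle+\tfrac12|A^-(t)g(X)|_1^2\le C(1+\|X\|^2),
\]
and then invokes \cite[Theorem~1]{serea2025existence} directly. The verification is short: the diffusion term is handled by the global Lipschitz bound on $g$ from (A.1), and the drift term by the one-sided Lipschitz condition (A.2) applied with $Y=0$ together with the elementary inequality $2ab\le a^2+b^2$. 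What this buys is that the obstacle you correctly flag --- propagating the one-sided Lipschitz structure of $f$ through the implicit constraint solver $h$ to the reduced drift $\widetilde{f}$ --- is completely avoided: the monotone condition is checked on the \emph{original} $f$ and $X$, and the entire reduction-to-inherent-SDE argument (your middle step) is hidden inside the cited theorem. Your route is self-contained and matches the decomposition the paper itself uses later (Lemma~\ref{lem9}), but for this theorem the paper's shortcut via the monotone condition is the cleaner path.
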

\begin{proof}  From  \cite[Theorem 1]{serea2025existence} it is enough to prove  that the coefficient of  \eqref{equa1} satisfies the following monotone condition
  \begin{align}\label{mon}
  	\langle (A^-(t)A(t)X)^T, A^-(t)(B(t)X(t)+f(X))\rangle&+ \frac{1}{2}|A^-(t)g(X)|_1^2\nonumber \\&\leq C(1+\|X\|^2),~X\in \mathbb{R}^d,
  \end{align}
  where $A^-$ is the pseudo-inverse matrix of the matrix $A$.  From Assumption \ref{asum1} (A.1), we have 
  \begin{align*}
  		\left| g(X)-g(0)\right|_1 &\leq L \left\| X\right\|, 	~\forall   X\in \mathbb{R}^d.~~~~~~~~~~~~~~~~~~~~~~~~~~~~~~~~~~~~~~~~~
  \end{align*} 
  This means that:
   \begin{align*}
  	\left| g(X)\right|_1&=\left| g(X)-g(0)+g(0)\right|_1\leq \left| g(X)-g(0)\right|_1+\left|g(0) \right|_1.~~~~~~~~~~~~~~~~~~~~\nonumber\\
   &\leq\left|g(0) \right|_1 + L \left\| X\right\|, 	~\forall   X\in \mathbb{R}^d
  \end{align*} 
and then 
 \begin{align}\label{g}
	\left| g(X)\right|_1^2 &\leq C_1(1+ \left\| X\right\|^2), 	~\forall   X\in \mathbb{R}^d.~~~~~~~~~~~~~~~~~~~~~~~~~~~~~~~~~~~~~~~~~
\end{align}
From Assumption \ref{asum1} (A.2), we have 
	$\left\langle X, f(X)-f(0)\right\rangle \leq C\left\| X\right\| ^2$.\\
By using the relation $2ab\leq a^2+b^2$, we obviously have
	\begin{align}\label{f}
	\left\langle X, f(X)\right\rangle &\leq C\left\| X\right\| ^2+\left\langle X, f(0)\right\rangle\leq C_1(1+\left\| X\right\| ^2).
\end{align}
Finally, we have
  \begin{align}
  		\langle (A^-(t)A(t)X)^T, &A^-(t)(B(t)X+f(X))\rangle+ \frac{1}{2}|A^-g(X)|_1^2\nonumber\\&\leq 
  		|	\langle (A^-AX)^T, A^-f(X)\rangle|\nonumber\\&+	|\langle (A^-AX)^T, A^-(B(t)X(t)\rangle|+ \frac{1}{2}|A^-g(X)|_1^2\nonumber\\
  		&\leq | A^-|_1^2|A|_1	|B|_1(1+\|X\|^2	)\nonumber\\
  	&+| A^-|_1^2|A|_1	|	\langle X^T, f(X)\rangle|+ \frac{| A^-|_1^2}{2}|g(X)|_1^2.~~~~~~
  \end{align}
 From \eqref{f} and \eqref{g}, we obtain \eqref{mon}. We conclude this proof using \cite[Theorem 1]{serea2025existence}.
	\end{proof}
\section{Numerical scheme}
We consider the SDAE \eqref{equa1}. We apply the tamed technique
as in \cite{hutzenthaler2012strong} and \cite{tambue2019strong})  to the drift term  \footnote{ only on $f$} and  the implicit Euler method in the linear part of the drift and obtain the following scheme,  that we call the  semi-implicit tamed scheme
\begin{align}\label{equa2}
	A(t_n)X_{n+1}^N-A(t_n)X_n^N&=hB(t_n)X_{n+1}^N+\frac{f(X_n^N) h}{1+h\left\|f(X_n^N) \right\| }+g(X_n^N)\Delta W_n^N,\\
	X_0^N&=X_0, ~n=0,1,2,...,N-1,\nonumber 
\end{align}
where $h =\frac{T}{N}$ is the time step-size, $\Delta W_n^N=W(t_{n+1})-W(t_n)$, $t_{n+1}=t_n+h=(n+1)h,~n=0,1,2,3,..., N-1$, with   $t_0=0$ and  $N\in\mathbb{N}$ is the number of time subdivisions. 
Observe that equation \eqref{equa2} can be written as 
\begin{align}\label{equa3}
	(A(t_n)-h B(t_n))X_{n+1}^N&=A(t_n)X_{n}^N+\frac{f(X_n^N)h}{1+h\left\|f(X_n^N) \right\| }+g(X_n^N)\Delta W_n^N, \\
	X_0^N&=X_0, ~n=0,1,2,...,N-1.\nonumber
\end{align}

This paper investigates the strong convergence of the approximated solution  $X_n^N$, which solves \eqref{equa3}, to the exact solution $X$ of \eqref{equa1}.
Let $A^{-}$ be the pseudo-inverse of matrix $A$, and $P$, $R$, $Q$ be the projector matrices associated with $A$, such that $R(t)A(t)=0_{d\times d}, \text{ and } A^-(t)A(t)=P(t)=I-Q, ~t\in [0,T]$ (see \cite{serea2025existence}).

We make  the following assumption, which is important for our main result. 
\begin{asum}\label{asum2} We assume that
\begin{enumerate}
		\item[(A2.1)]  The matrices  $A,~ A^-,~P,~P',B $ are bounded and Lipschitz by the constant $K\geq 0$.
  
      \item[(A2.2)] The matrices $(A(t_n)+R(t_n)B(t_n))~ ,n\in\mathbb{N}$ and $S_{h}=A(t_n)-h B(t_n),~n\in\mathbb{N}$ are non-singular matrices, and their inverses are also bounded with the same constant $K>0$.
      
    
      \item[(A2.3)] The matrix  $I_{d} +hM_1(t_n),~n\in\mathbb{N}$ is non singular matrix and satisfies the relation $$|(I_{d}+hM_1(t_n))^{-1}|_1\leq \exp{(Kh)},\,\,n \in \{ 0,1,2,..., N-1 \},$$  where $K$ is a positive constant,  $I_{d}$ is the  identity matrix in $\mathbb{R}^d$ and   \begin{align*}
 M_1(t_n)&=-P'(t_n)+P'(t_n)[A(t_n)+R(t_n)B(t_n)]^{-1}R(t_n)B(t_n)\\
 &-A^-(t_n) B(t_n)+A^-(t_n) B(t_n)[A(t_n)+R(t_n)B(t_n)]^{-1}R(t_n)B(t_n).
			\end{align*}  Note that similar assumption can be found in \cite[Inequality (24)]{schurz2005convergence}. 
     
\end{enumerate}

\end{asum}
 The main result for this section is given in the following Theorem
\begin{theo}\label{theo2}
	Let $X_n^N: \Omega \longrightarrow \mathbb{R}^d,~ n\in \{0,1,..,N\}\text{ and } N\in \mathbb{N}$ be  the solution of equation \eqref{equa3}. Suppose that the Assumptions \ref{asum1} and \ref{asum2} hold and let $p \geq 1$, then we have
	\begin{align}
		&\sup_{N\in \mathbb{N}}\sup_{n\in \{ 0,1,2,...,N\}} \mathbb{E}[\|X_n^N\|^p]<+\infty.\label{equa28}~~~~~~~~~~~~~~~~~~~~~~~~
	\end{align}	
\end{theo}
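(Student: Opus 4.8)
The plan is to pass to the "dual tamed scheme" alluded to in the abstract: using the projector decomposition $X = PX + QX$ together with the index-1 structure, one eliminates the algebraic constraint and obtains a recursion for $Y_n^N := P(t_n)X_n^N$ (the inherent/differential component) of the schematic form
\begin{align}\label{pf1}
(I_d + hM_1(t_n))\,Y_{n+1}^N &= Y_n^N + h\,\Phi_1(t_n,Y_n^N) + \Phi_2(t_n,Y_n^N)\,\Delta W_n^N + h\,r_n,
\end{align}
where the tamed drift contribution $\Phi_1$ inherits the bound $\|\Phi_1(t,y)\| \le \|f(y)\|/(1+h\|f(y)\|) \cdot (\text{bounded matrix factor}) \le h^{-1}$ up to constants, $\Phi_2$ is globally Lipschitz and of linear growth by (A.1) (and $Img \subseteq ImA$ so the noise does not touch the constraint), and $r_n$ collects lower-order terms controlled by $\|Y_n^N\|$. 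Invert $I_d + hM_1(t_n)$ using (A2.3), whose hypothesis $|(I_d+hM_1(t_n))^{-1}|_1 \le \exp(Kh)$ is exactly what makes the linearly-implicit step contractive-up-to-$e^{Kh}$; the algebraic component $Q(t_n)X_n^N$ is then recovered as a bounded linear (plus tamed, plus noise) function of $Y_n^N$ via (A2.2), so an $L^p$ bound on $Y_n^N$ yields one on $X_n^N$.

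Next I would run the standard tamed-scheme moment argument on \eqref{pf1}. Write $Y_{n+1}^N = (I_d+hM_1(t_n))^{-1}[Y_n^N + h\Phi_1 + \Phi_2\Delta W_n^N + hr_n]$, take $\|\cdot\|^p$, and expand. The key structural facts to exploit are: (i) the tamed term satisfies $\|h\Phi_1(t_n,Y_n^N)\| \le C h + C h\|Y_n^N\|$ pathwise (this is the whole point of taming — no superlinear term survives after multiplication by $h$), using (A.3) and boundedness of the projector/inverse matrices from (A2.1)–(A2.2); (ii) $\Phi_2$ has linear growth, so $\mathbb{E}[\|\Phi_2(t_n,Y_n^N)\Delta W_n^N\|^p \mid \mathcal{F}_{t_n}] \le C h^{p/2}(1+\mathbb{E}[\|Y_n^N\|^p])$ by the Burkholder–Davis–Gundy / moment inequality for Gaussian increments; (iii) the cross term $\langle Y_n^N, \Phi_2\Delta W_n^N\rangle$ has zero conditional mean, so taking conditional expectation it drops out at first order, and the remaining cross terms are handled by Young's inequality, producing only $Ch(1+\mathbb{E}[\|Y_n^N\|^p])$ contributions. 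Combining, and using $|(I_d+hM_1(t_n))^{-1}|_1^p \le \exp(pKh)$ from (A2.3), one arrives at a recursive inequality
\begin{align}\label{pf2}
\mathbb{E}[\|Y_{n+1}^N\|^p] &\le (1 + Ch)\,\mathbb{E}[\|Y_n^N\|^p] + Ch,
\end{align}
with $C$ independent of $n$ and $N$.

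Finally, iterating \eqref{pf2} and using the discrete Gr\"onwall lemma gives $\mathbb{E}[\|Y_n^N\|^p] \le e^{CT}(\mathbb{E}[\|Y_0^N\|^p] + 1)$, which is finite uniformly in $n$ and $N$ because $\mathbb{E}[\|X_0\|^p] < \infty$ (for $p=2$ this is (A.4); for general $p\ge 1$ I would either assume, or note the paper assumes, finite $p$-th moments of $X_0$, or restrict to the range of $p$ for which $X_0 \in L^p$). Transferring back through $X_n^N = Y_n^N + Q(t_n)X_n^N$ and the bounded representation of the algebraic part yields \eqref{equa28}. I expect the main obstacle to be step one: deriving the dual scheme \eqref{pf1} rigorously, i.e. verifying that under index-1 and (A2.1)–(A2.3) the constraint can be solved at each step so that $Q(t_n)X_n^N$ is genuinely a bounded function of $Y_n^N$ with the tamed and noise terms entering in a way that preserves the pathwise bound $\|h\Phi_1\| \lesssim h(1+\|Y_n^N\|)$ and linear growth of $\Phi_2$; once \eqref{pf1} is in hand with those two properties, the moment estimate \eqref{pf2} is the routine part of the tamed-method toolkit. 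A secondary technical point is keeping all matrix-norm constants ($|A^-|_1$, $|S_h^{-1}|_1$, $|(A+RB)^{-1}|_1$, the Lipschitz constant $K$) uniform in $n$ and $h\le T/1$, which is precisely guaranteed by the uniform-constant phrasing of Assumption \ref{asum2}.
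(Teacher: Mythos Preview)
Your reduction to the dual scheme is exactly what the paper does (its Lemma \ref{lem7}), and the identification of $(I_d+hM_1(t_n))^{-1}$ via (A2.3) is the right move. The gap is in step (i) of your moment argument: the claim that the tamed drift satisfies $\|h\Phi_1(t_n,Y_n^N)\|\le Ch+Ch\|Y_n^N\|$ pathwise with $C$ independent of $h$ is false for the taming used here. The tamed term is $h f(X_n^N)/(1+h\|f(X_n^N)\|)$, whose norm is $\min\bigl(h\|f(X_n^N)\|,1\bigr)$. Since $f$ grows superlinearly (A.3), the bound $h\|f(X)\|$ is of order $h\|X\|^{c+1}$, not $h\|X\|$; and the alternative bound $1$ carries no factor of $h$. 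A concrete counterexample: with $f(x)=-x^3$ and $x=h^{-1/3}$ the tamed term equals $1/2$, while $Ch(1+|x|)=Ch+Ch^{2/3}\to 0$. Hence the recursion \eqref{pf2} does not follow, and the direct Gr\"onwall argument collapses.

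This is precisely why the paper (following Hutzenthaler--Jentzen--Kloeden 2012) does \emph{not} attempt a direct recursive moment bound. Instead it proceeds in two stages. First, using only the crude estimate $h\|f\|/(1+h\|f\|)\le 1$, the dual-scheme recursion gives a moment bound that grows polynomially in $N$ (the tamed drift contributes at most a constant per step, summing to $O(N)$). Second, one introduces the sub-events $\Omega_n^N$ and the dominating process $D_n^N$ (Lemmas \ref{lem2}--\ref{lem5}): on $\Omega_n^N$ one has $\|X_n^N\|\le D_n^N$ with $D_n^N$ having moments bounded uniformly in $N$, while the complement $(\Omega_n^N)^c$ has probability decaying faster than any polynomial in $N^{-1}$, so H\"older's inequality against the polynomial bound from stage one yields a uniformly bounded contribution. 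Your proposal is missing this entire good-event/bad-event machinery, which is the essential idea; the dual-scheme reduction you outline is only the preparatory step.
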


The proof of this theorem needs some preliminary results.
\subsection{ Preliminary results}
The first lemma is the equivalence  numerical method  to the numerical method \eqref{equa2}
\begin{lem}\label{lem7} Suppose that the matrix $(A(t_n)+R(t_n)B(t_n)),n\in\mathbb{N}$ is a non singular matrix. Then, the scheme defined in equation \eqref{equa2} is equivalent to the following scheme
		\begin{equation}\label{equa31}
			\left \{
			\begin{array}{c c c}
				u_{n+1}-u_n &= P'(t_n)\left[ u_{n+1}+\hat{v}^N(t_{n+1},u_{n+1})\right] h+A^-(t_n) B(t_n)\left[u_{n+1}+\hat{v}^N(t_{n+1},u_{n+1})\right]h~~~~~~~~~~~~~\\
				\\
				&+\cfrac{A^-(t_n)f(t_n,u_{n}+\hat{v}^N(t_{n},u_{n}))h}{1+h\|f(t_n,u_{n}+\hat{v}^N(t_{n},u_{n}))\|} +A^-(t_n)g(t_n,u_{n}+\hat{v}^N(t_{n},u_{n}))\Delta W_n,\\
				\\
				\hat{v}^N(t_{n+1},u_{n+1})&=-(A(t_n)+R(t_n)B(t_n))^{-1}\left(R(t_n)B(t_n)u_{n+1}+\dfrac{R(t_n)f(t_n,u_{n}+\hat{v}^N(t_{n},u_{n}))}{1+h\|f(t_n,u_{n}+\hat{v}^N(t_{n},u_{n}))\|} \right),\\
				\\
        u_0&=P(0)\zeta,~~~~~~~~~~~~~~~~~~~~~~~~~~~~~~~~~~~~~~~~~~~~~~~~~~~~~~~~~~~~~~~~~~~~~~~~~~~~~~~~~~~~\\
        \hat{v}^N(0,u_0)&=Q(0)\zeta,~~~~~~~~~~~~~~~~~~~~~~~~~~~~~~~~~~~~~~~~~~~~~~~~~~~~~~~~~~~~~~~~~~~~~~~~~~~~~~~~~~~~\\
				\\
				X_{n+1}&=u_{n+1} +\hat{v}^N(t_{n+1},u_{n+1}),	~ n=0,1,...,N-1,~N\in\mathbb{N},~~~~~~~~~~~~~~~~~~~~~~~~~~~~~~~~
			\end{array}\right. 
		\end{equation}
		where the  matrix $A^{-}$ is the pseudo-inverse matrix of the matrix $A$ and,   $P$  and $R$, and $Q$ are projectors matrix associated to $A$, such that $R(t)A(t)=0_{d\times d}, \text{ and } A^-(t)A(t)=P(t)=I-Q, ~t\in [0,T]$ and the function $\hat{v}^N$ is global Lipschitz with respect the variable $u$.
	\end{lem}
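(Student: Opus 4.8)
The plan is to derive the equivalent scheme \eqref{equa31} directly from \eqref{equa2} by decomposing the iterates along the projectors $P$ and $Q$, exactly mirroring the way one decouples an index-1 DAE into its inherent ODE part and its constraint part. First I would write $X_{n+1}^N = P(t_n)X_{n+1}^N + Q(t_n)X_{n+1}^N$ and set $u_{n+1} := P(t_n)X_{n+1}^N$, so that $\hat v^N(t_{n+1},u_{n+1}) := Q(t_n)X_{n+1}^N$ is the ``algebraic'' component. Multiplying \eqref{equa2} (in the form \eqref{equa3}) on the left by $A^-(t_n)$ and using $A^-(t_n)A(t_n) = P(t_n)$ together with $A^-(t_n)B(t_n)$ terms produces the first line of \eqref{equa31}; the $P'$ term appears because, passing from $P(t_n)X_{n+1}^N$ at one step to $P(t_{n+1})X_{n+1}^N$, one picks up the increment $(P(t_{n+1})-P(t_n))X_{n+1}^N \approx h P'(t_n)X_{n+1}^N$, which after the splitting becomes $hP'(t_n)[u_{n+1}+\hat v^N(t_{n+1},u_{n+1})]$. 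This is the step where the precise bookkeeping of which time index $P$ is evaluated at matters, and where Assumption \ref{asum2} (A2.1) (Lipschitz continuity of $P$) is implicitly used to justify the form of the discrete increment.

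Next I would extract the constraint equation. Multiplying \eqref{equa3} on the left by the projector $R(t_n)$ and using $R(t_n)A(t_n)=0$ kills the $A(t_n)X_n^N$ term and the $A(t_n)X_{n+1}^N$ term on the left, leaving a relation of the form $-R(t_n)B(t_n)X_{n+1}^N = \tfrac{R(t_n)f(\cdot)h}{1+h\|f(\cdot)\|}+R(t_n)g(\cdot)\Delta W_n$ up to the index-1 hypothesis; since Definition \ref{def2}(i) gives $\mathrm{Im}\,g(t,X)\subseteq \mathrm{Im}\,A(t)$ we have $R(t_n)g(t_n,\cdot)=0$, so the noise drops out of the constraint. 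Writing $X_{n+1}^N = u_{n+1}+\hat v^N(t_{n+1},u_{n+1})$, splitting $R(t_n)B(t_n)X_{n+1}^N$ accordingly, and solving for $\hat v^N(t_{n+1},u_{n+1})$ using that $(A(t_n)+R(t_n)B(t_n))$ is invertible (Assumption \ref{asum2} (A2.2), and the hypothesis of the lemma) yields the third line of \eqref{equa31}. The identities $A(t_n)Q(t_n)=0$ and $R(t_n)A(t_n)=0$, standard for the projectors associated with an index-1 matrix pencil, are what make $(A(t_n)+R(t_n)B(t_n))^{-1}$ act correctly on the $Q$-component. The initial conditions $u_0 = P(0)\zeta$, $\hat v^N(0,u_0)=Q(0)\zeta$ are just this same decomposition applied to $X_0$.

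Finally I would verify the global Lipschitz property of $\hat v^N$ in the variable $u$. From the explicit formula in the third line of \eqref{equa31}, $\hat v^N(t_{n+1},u_{n+1})$ is an affine-linear function of $u_{n+1}$ (through $-(A+RB)^{-1}RB\,u_{n+1}$) plus a term depending only on $u_n$ through the tamed drift; the linear coefficient is bounded uniformly in $n$ and $N$ by Assumption \ref{asum2} (A2.2) and (A2.1), and the tamed term $\tfrac{R(t_n)f(\cdot)}{1+h\|f(\cdot)\|}$ is bounded by $h^{-1}\cdot$const in absolute value but, more to the point, is Lipschitz in $u_n$ with a constant independent of $h$ because $\|f(x)\|/(1+h\|f(x)\|)\le\|f(x)\|$ and the taming does not destroy the local Lipschitz structure on bounded sets — here one must be a little careful and argue that the map $x\mapsto \tfrac{x}{1+h\|x\|}$ is globally $1$-Lipschitz on $\mathbb R^d$, so that composed with any globally Lipschitz approximant of $f$ it stays globally Lipschitz with the same constant. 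The genuinely delicate point, and the one I expect to be the main obstacle, is handling the implicit coupling: $\hat v^N(t_{n+1},u_{n+1})$ appears on both sides (on the right through the $t_n$-indexed term which is itself $u_n+\hat v^N(t_n,u_n)$, and the first line of \eqref{equa31} contains $\hat v^N(t_{n+1},u_{n+1})$ implicitly). One has to show this fixed-point relation is well posed, i.e. that $I - hP'(t_n) - hA^-(t_n)B(t_n)$ composed with the coupling through $\hat v^N$ is invertible for $h$ small — this is precisely what Assumption \ref{asum2} (A2.3), the bound $|(I_d+hM_1(t_n))^{-1}|_1\le e^{Kh}$, is designed to guarantee, with $M_1(t_n)$ being exactly the matrix that arises after substituting the expression for $\hat v^N(t_{n+1},u_{n+1})$ back into the $u$-recursion. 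So the proof strategy is: decompose, extract the constraint, substitute to eliminate $\hat v^N(t_{n+1},\cdot)$ from the $u$-equation, read off $M_1$, and invoke (A2.3) for solvability, then close with the elementary Lipschitz estimate for the taming map.
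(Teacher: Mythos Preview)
Your projector-based decoupling strategy---left-multiply \eqref{equa2} by $A^-(t_n)$ for the $u$-equation, by $R(t_n)$ for the constraint, use $RA=0$ and the index-1 condition $Rg=0$ to drop the noise, then add the free term $A(t_n)\hat v^N=A(t_n)Q(t_n)X_{n+1}^N=0$ and invert $A+RB$---is the standard argument and matches what the paper defers to in \cite[Lemma 12]{tsafack2025pathwise}; the paper itself gives no independent proof.

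Two points to tighten. First, the lemma asserts an \emph{exact} equivalence, so the step $(P(t_{n+1})-P(t_n))X_{n+1}^N \approx hP'(t_n)X_{n+1}^N$ cannot stay an approximation; you need either to check that the referenced construction defines $u_n$ so that this is an identity, or to read $hP'(t_n)$ in \eqref{equa31} as shorthand for the exact increment (the later analysis handles both the same way under Assumption~\ref{asum2}(A2.1)). Second, your closing discussion of the fixed-point solvability via $(I_d+hM_1)^{-1}$ and Assumption~\ref{asum2}(A2.3) is not part of Lemma~\ref{lem7}: the lemma's only hypothesis is the invertibility of $A+RB$, and the $M_1$ machinery enters later, in the proof of Theorem~\ref{theo2} (see \eqref{equa35a}). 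For the Lipschitz claim it suffices to observe that $\hat v^N(t_{n+1},\cdot)$ depends on $u_{n+1}$ only through the linear map $-(A+RB)^{-1}RB$, which is bounded; the tamed-$f$ term depends only on the previous step $u_n$ and does not enter the Lipschitz constant.
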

	
	\begin{proof}
		The proof of this lemma uses the same logic or strategy as the proof of \cite[Lemma 12]{tsafack2025pathwise}
	\end{proof}

 The proof of the following lemma can be found in \cite[Lemma 3.8]{hutzenthaler2012strong} or in \cite[Lemma 3.13]{tambue2019strong}.
\begin{lem}\label{lem8}
	Let $k\in \mathbb{N}$ and let $Z_l^N:\Omega \longrightarrow 	  \mathbb{R}^{k\times m}$, $l\in \{0,1,...,N-1\},~N\in \mathbb{N}$ be a family of mappings such that $Z_l^N$ is $\mathcal{F}_{lT/N}/\mathcal{B}(\mathbb{R}^{k\times m})$-measurable for all $l\in \{0,1,...,N-1\},~N\in \mathbb{N}$. Then we have
	\begin{equation}
		\left\|\sup_{j\in \{ 0,~1,~2,...,~n\}}\sum_{l=0}^{j-1}\left\|Z_l^N\Delta W_l^N\right\|\right\|_{L^p(\Omega, \mathbb{R}^d)}\leq p\left(\sum_{l=0}^{n-1}\|Z_l^N\|^2_{L^p(\Omega, \mathbb{R}^{k\times m})}\frac{T}{N}\right)^\frac{1}{2},
	\end{equation}
	for all $p>1$ and where $\|Z_l^N\|^2_{L^p(\Omega, \mathbb{R}^{k\times m})}=\left(\mathbb{E}|Z_l^N|^{p}_F\right)^{\frac{2}{p}}.$
\end{lem}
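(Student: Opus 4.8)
\textbf{Proof plan for Theorem \ref{theo2}.}

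The strategy is to transfer the moment bound problem from the original scheme \eqref{equa3} to the dual tamed scheme \eqref{equa31}, exploiting Lemma \ref{lem7}. Since $X_{n+1}=u_{n+1}+\hat v^N(t_{n+1},u_{n+1})$ and $\hat v^N$ is globally Lipschitz in $u$ (hence of at most linear growth), it suffices to prove $\sup_{N}\sup_{n}\mathbb E[\|u_n\|^p]<\infty$; the bound on $\mathbb E[\|X_n^N\|^p]$ then follows from $\|X_{n+1}\|\le \|u_{n+1}\|+\|\hat v^N(t_{n+1},u_{n+1})\|\le C(1+\|u_{n+1}\|)$ and the triangle/Minkowski inequality. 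So the core of the proof is a uniform $L^p$ moment estimate for the $u$-recursion, which is a tamed Euler-type scheme for the inherent SDE.

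First I would rewrite the $u$-recursion in the compact form $u_{n+1}=(I_d+hM_1(t_n))^{-1}\big(u_n + h\,G_n + H_n\,\Delta W_n\big)$, where $G_n$ collects the tamed nonlinear drift term $A^-(t_n)f(t_n,\cdot)/(1+h\|f(t_n,\cdot)\|)$ together with the $M_1$-dependent corrections coming from $\hat v^N$, and $H_n=A^-(t_n)g(t_n,u_n+\hat v^N(t_n,u_n))$. Here Assumption \ref{asum2}(A2.3) is crucial: $|(I_d+hM_1(t_n))^{-1}|_1\le\exp(Kh)$ controls the linear-implicit factor. Next I would isolate the $\mathcal F_{t_n}$-measurable part $D_n:=(I_d+hM_1(t_n))^{-1}(u_n+hG_n)$ from the martingale-increment part $(I_d+hM_1(t_n))^{-1}H_n\Delta W_n$. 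Squaring (or raising to the $p$-th power via $\|a+b\|^p\le (1+\varepsilon)^{p-1}\|a\|^p + C_\varepsilon\|b\|^p$ after conditioning on $\mathcal F_{t_n}$ so the cross term vanishes), one gets $\mathbb E\|u_{n+1}\|^p\le e^{Kph}\big((1+Ch)\mathbb E\|D_n\|^p + C h\,\mathbb E[(1+\|u_n\|^p)]\big)$, using the taming bound $\|hG_n\|\le \sqrt h$-type control on the nonlinear term (the defining feature of taming: $h\|f\|/(1+h\|f\|)\le\min(h\|f\|,1)$, so $\|h\,A^-f/(1+h\|f\|)\|\le C\sqrt h\,(1+\|u_n\|)$ after splitting $h=\sqrt h\cdot\sqrt h$), the linear growth of $\hat v^N$, and the global Lipschitz (hence linear growth) of $g$ which makes $\mathbb E\|H_n\Delta W_n\|^p\le C h^{p/2}(1+\mathbb E\|u_n\|^p)$. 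Collecting, $\mathbb E\|u_{n+1}\|^p\le (1+Ch)\,\mathbb E\|u_n\|^p + Ch$ for a constant $C$ independent of $N$ and $n$; the discrete Gronwall lemma then yields $\mathbb E\|u_n\|^p\le (1+\mathbb E\|u_0\|^p)e^{CT}$, uniformly in $N$ and $n$, and Assumption \ref{asum1}(A.4) bounds $\mathbb E\|u_0\|^p=\mathbb E\|P(0)\zeta\|^p$ (first for $p=2$; higher $p$ needs a moment assumption on $X_0$, which I would either add or note is implicit).

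The main obstacle is handling the superlinear drift $f$ without losing uniformity in $N$: naively, $\|f(t_n,u_n+\hat v^N(t_n,u_n))\|$ can be as large as $C(1+\|u_n\|^{c+1})$, so one cannot simply bound the tamed term by $Ch(1+\|u_n\|^p)$. The resolution, as in \cite{hutzenthaler2012strong,tambue2019strong}, is a careful two-regime argument: on the event $\{h\|f(t_n,u_n+\hat v^N)\|\le 1\}$ the tamed term is genuinely $O(h)$ in the relevant norm, while on the complementary event the tamed term is bounded by $1/\|u_n\|^{\text{(something)}}$ times lower-order factors, so its $p$-th moment contribution is still $O(h)$ after absorbing powers of $\|u_n\|$ — this is exactly where the exponent $c$ in (A.3), the one-sided Lipschitz (A.2), and the structure of the taming denominator interact. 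A secondary technical point is that $G_n$ and $H_n$ depend on $u_n$ through $\hat v^N$, which itself is defined implicitly via the third line of \eqref{equa31}; but since $\hat v^N$ is globally Lipschitz in $u$ by Lemma \ref{lem7}, this dependence is harmless for growth estimates. I expect the bulk of the written proof to be the bookkeeping in the two-regime estimate and verifying that all constants depend only on $K$, $L$, $C$, $c$, $T$ and the moment bound $M$, never on $N$.
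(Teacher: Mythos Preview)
Your proposal addresses the wrong statement. The statement in question is Lemma~\ref{lem8}, a discrete Burkholder--Davis--Gundy--type inequality for sums $\sum_l Z_l^N\Delta W_l^N$; the paper does not prove it but simply cites \cite[Lemma~3.8]{hutzenthaler2012strong} and \cite[Lemma~3.13]{tambue2019strong}. What you have written is instead a proof plan for Theorem~\ref{theo2}, the uniform $L^p$ moment bound for the numerical iterates $X_n^N$. These are entirely different results: Lemma~\ref{lem8} is a tool (a martingale inequality with no reference to the scheme, to $f$, $g$, $A$, or taming), whereas Theorem~\ref{theo2} is an application that \emph{uses} Lemma~\ref{lem8} as one of its ingredients. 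A proof of Lemma~\ref{lem8} would proceed via Doob's maximal inequality and the Burkholder inequality applied to the discrete martingale $j\mapsto\sum_{l<j}Z_l^N\Delta W_l^N$, followed by conditioning on $\mathcal F_{lT/N}$ to factor out $Z_l^N$ from each increment; nothing in your write-up touches this.

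As a secondary remark, your plan for Theorem~\ref{theo2} also diverges from the paper's actual proof. The paper does reduce to the $u$-recursion and does use the compact form $(I_d+hM_1(t_n))u_{n+1}=u_n+\cdots$ together with Assumption~\ref{asum2}(A2.3), as you propose. But from there the paper applies Lemma~\ref{lem8} and discrete Gronwall to obtain only an $N$-dependent bound $\|X_n^N\|_{L^p}\le C(1+N)e^{CT}$, and then defers the genuinely uniform-in-$N$ estimate to the dominating-process machinery of Lemmas~\ref{lem2}--\ref{lem5} (the analogue of \cite[Lemma~3.9]{hutzenthaler2012strong}), which hinges on the pathwise subevents $\Omega_n^N$ and the exponential control of $D_n^N$. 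Your proposed direct route --- a recursion $\mathbb E\|u_{n+1}\|^p\le(1+Ch)\mathbb E\|u_n\|^p+Ch$ via a two-regime split of the tamed term --- is the kind of argument that is known \emph{not} to close for tamed schemes with superlinear drift: the taming gives $\|hf/(1+h\|f\|)\|\le 1$, not $\le C\sqrt h(1+\|u_n\|)$, and on the event $\{h\|f\|>1\}$ there is no smallness in $h$ to feed into Gronwall. That is precisely why \cite{hutzenthaler2012strong} introduced the $D_n^N$/$\Omega_n^N$ apparatus, and the paper follows that route rather than the one you sketch.
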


 
 Our following lemma is the well-known Gronwall lemma.
	\begin{lem}\label{lem1t1}
		[The Discrete Gronwall Inequality]. Let $M$ be a positive integer. Let $u_k$ and $v_k$ be nonnegative numbers for $k = 0, 1,
		. . . , M.$
		$$\text{If  } ~~~~u_k\leq u_0+\sum_{j=0}^{k-1}v_ju_j;~k = 0, 1,
		. . . , M.~~~~~~~~~~~~~~~~~~~~~~~~~~~~~~~~~~~~$$
		Then $$u_k\leq u_0\exp{\left(\sum_{j=0}^{k-1}v_j\right)};~k = 0, 1,
		. . . , M. ~~~~~~~~~~~~~~~~~~~~~~~~~~~~~~~~$$
		
	\end{lem}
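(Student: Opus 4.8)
The plan is to reduce the hypothesis to a one-step recursion for the running sum on the right-hand side and then iterate. Define, for $k = 0, 1, \dots, M$,
\begin{equation*}
S_k := u_0 + \sum_{j=0}^{k-1} v_j u_j,
\end{equation*}
with the convention that the empty sum is zero, so that $S_0 = u_0$. By hypothesis we have $u_k \le S_k$ for every $k$, and directly from the definition
\begin{equation*}
S_{k+1} - S_k = v_k u_k \le v_k S_k, \qquad k = 0, 1, \dots, M-1,
\end{equation*}
where the inequality uses $v_k \ge 0$ together with $u_k \le S_k$. Hence $S_{k+1} \le (1 + v_k) S_k$.

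Next I would iterate this recursion. Since every factor $1 + v_k \ge 1 > 0$, multiplying the chain of inequalities preserves the direction, and a trivial induction on $k$ gives
\begin{equation*}
S_k \le S_0 \prod_{j=0}^{k-1} (1 + v_j) = u_0 \prod_{j=0}^{k-1} (1 + v_j), \qquad k = 0, 1, \dots, M,
\end{equation*}
the case $k = 0$ being the identity $S_0 = u_0$. Applying the elementary inequality $1 + x \le e^x$, valid for all $x \ge 0$, to each factor yields
\begin{equation*}
\prod_{j=0}^{k-1} (1 + v_j) \le \prod_{j=0}^{k-1} e^{v_j} = \exp\!\left( \sum_{j=0}^{k-1} v_j \right).
\end{equation*}
Combining the last three displays with $u_k \le S_k$ gives $u_k \le u_0 \exp\!\left( \sum_{j=0}^{k-1} v_j \right)$ for $k = 0, 1, \dots, M$, which is the claim.

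There is essentially no hard step here: the argument is a textbook computation. The only points that merit a word of care are the bookkeeping of empty sums and products at $k = 0$ (so that the base case of the induction is the tautology $u_0 \le u_0$), and the observation that nonnegativity of the $v_j$ is exactly what licenses both passing from $v_k u_k$ to $v_k S_k$ and multiplying the one-step inequalities together without reversing them. If one prefers to avoid the product altogether, an equivalent route is a direct induction proving $u_k \le u_0 \exp\!\left(\sum_{j=0}^{k-1} v_j\right)$: assuming the bound for all indices below $k$, substitute into the hypothesis and bound $1 + \sum_{j=0}^{k-1} v_j \exp\!\left(\sum_{i=0}^{j-1} v_i\right)$ by $\exp\!\left(\sum_{j=0}^{k-1} v_j\right)$ using the same inequality $1 + x \le e^x$ telescopically; either presentation is acceptable.
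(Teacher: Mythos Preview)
Your argument is correct and entirely standard: defining the running sum $S_k$, deriving the one-step recursion $S_{k+1}\le(1+v_k)S_k$, iterating to a product, and bounding $1+v_j\le e^{v_j}$ is the textbook route, and your bookkeeping at $k=0$ and your use of nonnegativity of the $v_j$ are handled cleanly. The paper does not give its own proof of this lemma but simply refers the reader to \cite[Lemma 3.4]{mao2013strong} and \cite{mao2006stochastic}, so there is no in-paper argument to compare against; your self-contained proof supplies exactly what the paper defers to the literature.
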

	The proof can be found in \cite[Lemma 3.4 ]{mao2013strong} or in \cite{mao2006stochastic}.

	
	\begin{lem}\label{lem1} For all $a,~b \geq 1$ , the following inequality holds
		$$a+b\leq e^{ab}.$$
	\end{lem}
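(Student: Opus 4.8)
The final statement to prove is Lemma~\ref{lem1}: for all $a,b \geq 1$, we have $a+b \leq e^{ab}$.

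\medskip

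\textbf{Proof proposal.} The plan is to reduce the two-variable inequality to a one-variable statement and then invoke the standard exponential bound $1+x \leq e^x$ (valid for all real $x$). First I would observe that since $a,b\geq 1$ we have $ab\geq a$ and $ab\geq b$, hence $2ab\geq a+b$, so it suffices to prove the stronger-looking but cleaner inequality $2ab\leq e^{ab}$; equivalently, writing $t=ab\geq 1$, it suffices to show $2t\leq e^t$ for all $t\geq 1$. This last inequality follows from $e^t\geq 1+t+\tfrac{t^2}{2}\geq 1+t+\tfrac{t}{2}\cdot t$; a cleaner route is to note $g(t)=e^t-2t$ satisfies $g(1)=e-2>0$ and $g'(t)=e^t-2>0$ for $t\geq 1$ (since $e^t\geq e>2$), so $g$ is increasing on $[1,\infty)$ and stays positive.

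\medskip

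Alternatively, and perhaps more in the spirit of an elementary one-liner, I would argue directly: for $a,b\geq 1$, $ab \geq \max\{a,b\} \geq \tfrac{a+b}{2}$, and then use $e^{ab} \geq 1 + ab \geq 1 + \tfrac{a+b}{2}$. This is not quite enough on its own since $1+\tfrac{a+b}{2}$ need not dominate $a+b$; so one needs the second-order term. The cleanest self-contained version: $e^{ab} \geq 1 + ab + \tfrac{(ab)^2}{2} \geq ab + \tfrac{(ab)^2}{2} \geq \tfrac{a+b}{2} + \tfrac{(a+b)}{2}\cdot\tfrac{ab}{1} \cdot \tfrac{1}{1}$ — here one uses $ab\geq 1$ once and $2ab\geq a+b$ once to conclude $ab+\tfrac{(ab)^2}{2}\geq \tfrac{a+b}{2}+\tfrac{a+b}{2}=a+b$. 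I would write it in the streamlined form: since $e^x\geq 1+x$ for all $x$, applying this twice (or applying the quadratic Taylor bound once) gives $e^{ab}\geq \tfrac{1}{2}(2ab) + \tfrac{1}{2}(ab)^2 \ge \tfrac12(a+b)+\tfrac12(a+b)\cdot 1 = a+b$, using $2ab\ge a+b$ and $ab\ge 1$.

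\medskip

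There is no real obstacle here — the only subtlety is that the naive bound $e^{ab}\geq 1+ab$ alone is insufficient, so one must either use the quadratic term in the exponential series or do a one-variable calculus argument on $[1,\infty)$; either way the proof is two or three lines. I would present the calculus version for transparency: reduce to $2t\le e^t$ for $t=ab\ge 1$ via $2ab\ge a+b$, then note $h(t)=e^t-2t$ has $h(1)=e-2>0$ and $h'(t)=e^t-2\ge e-2>0$ on $[1,\infty)$, so $h(t)>0$ throughout, which yields $a+b\le 2ab\le e^{ab}$.
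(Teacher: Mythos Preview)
The paper states Lemma~\ref{lem1} without proof, so there is no argument to compare against; your task was simply to supply one, and your final calculus version does the job cleanly: from $a,b\ge 1$ you get $2ab\ge a+b$, set $t=ab\ge 1$, and verify $h(t)=e^t-2t$ satisfies $h(1)=e-2>0$ with $h'(t)=e^t-2\ge e-2>0$ on $[1,\infty)$, hence $a+b\le 2ab\le e^{ab}$. That argument is correct and is the one you should present.

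One caution about your middle ``alternative'' paragraph: the step $\tfrac{(ab)^2}{2}\ge \tfrac{a+b}{2}$, i.e.\ $(ab)^2\ge a+b$, does \emph{not} follow from $ab\ge 1$ alone --- at $a=b=1$ it reads $1\ge 2$. So that variant, as written, has a gap; drop it and keep only the calculus reduction, which is both shorter and airtight.
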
	
    
 The following lemma is  also important in our analysis
\begin{lem}\label{lem2}
	Let $X_n^N:\Omega\to \mathbb{R}^d$, $D_n^N: \Omega \to \left[ 0,T\right] $ and $\Omega_n^N\in \Omega$ for $n\in \left\lbrace 0,1,2, ..., N \right\rbrace $ and $N\in \mathbb{N}$ given;  then we have 
	\begin{equation}\label{equa4}
		\mathbbm{1}_{{\Omega}_n^N}\left\| X_n^N\right\| \leq D_n^N,\text{ for } n\in \left\lbrace 0,1,2,...,N \right\rbrace \text{ and all } N\in \mathbb{N} ,
	\end{equation}
where $$\Omega_n^N:=\left\lbrace \omega\in \Omega: \sup_{k\in \{ 0,~1,~2,...,~n-1\} }  D_k^N(\omega)\leq N^{\frac{1}{2c}};~ \sup_{k\in \{ 0,~1,~2,...,~n-1\}}  \left\| \Delta W_k^N(\omega)\right\|\leq 1\right\rbrace .   $$ for all $~n\in \{ 0,~1,~2,...,~N-1\}$, $N\in \mathbb{N}$ and the constant $c\geq 0$ is defined in Assumption \ref{asum1} (A.3).
\end{lem}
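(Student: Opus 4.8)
The plan is to prove \eqref{equa4} by induction on $n$, running the tamed-Euler methodology of \cite{hutzenthaler2012strong,tambue2019strong} on the equivalent ``dual'' scheme \eqref{equa31} of Lemma \ref{lem7} rather than on \eqref{equa3} itself, so as to cope with the singularity of $A(t_n)$. For the base case $n=0$ one has $\Omega_0^N=\Omega$ (both suprema are over the empty index set) and $\|X_0^N\|=\|X_0\|\le D_0^N$ by construction of the dominating process. For the inductive step the key observation is that the events are nested, $\Omega_{n+1}^N\subseteq\Omega_n^N\subseteq\cdots\subseteq\Omega_0^N$; hence on $\Omega_{n+1}^N$ all earlier good events hold, and combining this with the induction hypothesis $\mathbbm{1}_{\Omega_k^N}\|X_k^N\|\le D_k^N$ and with the definition of $\Omega_{n+1}^N$ itself we obtain, on $\Omega_{n+1}^N$, the two crucial bounds $\|X_k^N\|\le D_k^N\le N^{\frac{1}{2c}}$ and $\|\Delta W_k^N\|\le1$ for every $k\le n$.

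Next I would rewrite the step from $X_n^N$ to $X_{n+1}^N$ through the dual variables: by Lemma \ref{lem7}, $X_{n+1}^N=u_{n+1}+\hat v^N(t_{n+1},u_{n+1})$ with $\hat v^N$ globally Lipschitz in its second argument, and substituting the constraint line for $\hat v^N$ into the $u$-update collapses the implicit part into the single matrix $I_d+hM_1(t_n)$, yielding $u_{n+1}=\bigl(I_d+hM_1(t_n)\bigr)^{-1}\bigl(u_n+h\,\Phi(t_n)\tfrac{f(X_n^N)}{1+h\|f(X_n^N)\|}+A^-(t_n)g(X_n^N)\Delta W_n^N\bigr)$, where the matrix $\Phi(t_n)$ is uniformly bounded in $n,N$ by the boundedness hypotheses of Assumption \ref{asum2}. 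Two features of this representation do the work: first, $\|X_n^N\|$ and $\|u_n\|$ are equivalent up to the Lipschitz constant of $\hat v^N$, so it suffices to estimate $\|u_n\|$; second, Assumption \ref{asum2}(A2.3) gives $|(I_d+hM_1(t_n))^{-1}|_1\le e^{Kh}$, so that the $N$-fold composition of one-step propagators stays bounded by $e^{KNh}=e^{KT}$, uniformly in $N$ --- which is exactly what iterating \eqref{equa3} naively, with the propagator $\bigl(A(t_n)-hB(t_n)\bigr)^{-1}A(t_n)$ whose operator norm may exceed $1$, fails to provide.

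From there the estimate is the standard tamed-scheme bookkeeping. Squaring the $u$-update, the cross term with the tamed drift is absorbed by the one-sided Lipschitz condition (A.2) --- inherited by the drift of the inherent, constraint-free equation, exactly as in \eqref{f} --- producing a contribution bounded by $Ch(1+\|u_n\|^2)$, linear in $\|u_n\|^2$ and hence harmless under Gronwall. The square of the tamed drift is controlled by $\tfrac{h\|f\|}{1+h\|f\|}\le h\|f\|$ together with the polynomial growth (A.3) and the good-event bound $\|X_n^N\|\le N^{\frac{1}{2c}}$: this is where the exponent $\tfrac1{2c}$ is chosen, so that $h\|f(X_n^N)\|$ is of order $\sqrt h$ and the squared tamed drift is of order $h$ per step, hence summable over the $N$ steps. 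The diffusion term is kept as the (conditional) martingale increment $A^-(t_n)g(X_n^N)\Delta W_n^N$, whose coefficient is bounded via \eqref{g} and whose running maximum $\sup_{j\le n}\bigl\|\sum_{l=0}^{j-1}A^-(t_l)g(X_l^N)\Delta W_l^N\bigr\|$ is estimated by the discrete Burkholder-type inequality of Lemma \ref{lem8}. Feeding these bounds into the recursion, applying the discrete Gronwall inequality (Lemma \ref{lem1t1}), and translating back from $u_{n+1}$ to $X_{n+1}^N$ through $\hat v^N$ produces exactly the dominating quantity $D_{n+1}^N$ and closes the induction; the same computation also shows $\sup_N\sup_n\mathbb{E}[(D_n^N)^p]<\infty$, which is the form in which the bound is used in Theorem \ref{theo2}.

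The step I expect to be the genuine obstacle is the first one: extracting a contractive-type one-step propagator --- the $e^{Kh}$ bound of Assumption \ref{asum2}(A2.3) --- from the singular matrix $A(t_n)$ and the implicitly discretised linear drift, which is precisely why the dual scheme must be introduced before any moment estimate is attempted. Once that structure is in place, the nonlinear part follows the now-familiar tamed-Euler route, the only care being the truncation bookkeeping that keeps the tamed drift at size $O(\sqrt h)$ per step on the good event $\Omega_{n+1}^N$, so that neither the Gronwall constant nor the accumulated drift depends on $N$.
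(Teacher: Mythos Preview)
Your proposal takes a genuinely different route from the paper and, as written, contains a gap.

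The paper does \emph{not} pass to the dual scheme for this lemma; it works directly with \eqref{equa3}. Writing $X_{n+1}^N = S_h A X_n^N + \tfrac{h\,S_h f(X_n^N)}{1+h\|f(X_n^N)\|} + S_h g(X_n^N)\Delta W_n^N$, it squares and extracts a \emph{multiplicative} one-step bound $\|X_{n+1}^N\|^2 \le \|X_n^N\|^2\bigl[K^2|A|_1^2 + \tfrac{2\lambda}{N} + 2\lambda\|\Delta W_n^N\|^2 + 2\alpha_n^N\bigr]$ on $\{1\le\|X_n^N\|\le N^{1/(2c)}\}$, where $\alpha_n^N$ is the \emph{normalised, signed} diffusion cross term defined in \eqref{equa8}. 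The elementary inequality $a+b\le e^{ab}$ of Lemma~\ref{lem1} converts the bracket into an exponential factor; combined with the crude bound $\|X_{n+1}^N\|\le\lambda$ valid when $\|X_n^N\|\le 1$ (equation \eqref{equa5}) and the last-exit index $\tau_l^N$ of \eqref{equa13}, this produces exactly the explicit dominator $D_n^N$ of \eqref{equa12}. The dual scheme and Assumption~\ref{asum2}(A2.3) play no role here; they enter only later, in the proof of Theorem~\ref{theo2}. So your claim that iterating \eqref{equa3} ``fails to provide'' the needed estimate is the opposite of what the paper actually does.

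The gap in your argument is that Lemma~\ref{lem2} is a \emph{pathwise} inequality on each $\omega\in\Omega_n^N$, while you invoke Lemma~\ref{lem8}, an $L^p$ Burkholder-type estimate, to control the martingale part; that cannot deliver a pathwise bound. On $\Omega_{n+1}^N$ the only information about the noise is $\|\Delta W_k^N\|\le 1$, so the diffusion increment is of order $\|X_n^N\|$ per step, not $h^{1/2}\|X_n^N\|$; feeding this into a discrete Gronwall iteration picks up a factor $(1+C)^N$ and is not uniform in $N$. The paper's device for absorbing this growth is precisely the signed increment $\alpha_n^N$ placed \emph{inside the exponent} of $D_n^N$, together with the $\sup_{u}\sum_{k=u}^{n-1}$ structure that handles restarts below level~$1$; the $L^p$-finiteness of $D_n^N$ is then a separate statement (Lemmas~\ref{lem3}--\ref{lem5}) proved via exponential-martingale moment bounds, not via Lemma~\ref{lem8}. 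Your outline contains no analogue of $\alpha_n^N$, of the two-regime (above/below~$1$) splitting, or of the restart mechanism, so as written it does not close the induction with an $N$-independent dominator.
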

\begin{proof} 
	From the definition of $\Omega_n^N$ note that  $\left\| \Delta W_n^N(\omega)\right\|\leq 1$ on $\Omega_{n+1}^N$ for all  $n\in \{ 0,~1,~2,...,~N-1\}$,  $N\in \mathbb{N}$.
 
 We recall equation \eqref{equa3} here as follows with $S_h=\left(A(t_n)-hB(t_n)\right)^{-1}$
	\begin{equation*}
		X_{n+1}^N=S_{h}AX_{n}^N+\frac{S_{h}f(X_n^N)h}{1+h\left\|f(X_n^N) \right\| }+S_{h}g(X_n^N)\Delta W_n^N,~X_0^N=X_0, ~n=0,1,2,...,N-1.
	\end{equation*}
Taking the norm on both sides, we obtain
	\begin{equation*}
	\left\| X_{n+1}^N\right\| \leq \left\| S_{h}AX_{n}^N\right\| +\frac{\left\| S_{h}f(X_n^N)\right\| h}{1+h\left\|f(X_n^N) \right\| }+\left\|S_{h} g(X_n^N)\Delta W_n^N\right\|,~X_0^N=X_0, ~n=0,1,2,...,N-1.
\end{equation*}
	But $\left\|\Delta W_n^N\right|\leq 1$, $h=\frac{T}{N}$; $\frac{1}{1+h\left\| f((X_n^N)\right\| }\leq 1,~n=0,1,2,...,N-1, ~~N\in \mathbb{N}.$ 
 
 This means that by using Assumption \ref{asum1} and Assumption \ref{asum2}, we have
		\begin{align}\label{equa5}
		\left\| X_{n+1}^N\right\|& \leq \left| S_{h}\right|_1 \left| A\right| _1\left\| X_{n}^N\right\| +T\left| S_{h}\right| _1\left\| f(X_n^N)-f(0)+f(0)\right\|~~~~~~~~~~~~~~~~~~~~~~\nonumber\\
  &+\left|S_{h}\right| _1 \left| g(X_n^N)-g(0)+g(0)\right| _1\nonumber\\
		& \leq 1+ K\left| A\right| _1+2KTC+ KT\left\| f(0)\right\| +LK+K\left| g(0)\right| _1\nonumber\\
			& \leq \lambda, \text{ for all }~n\in \{0,1,2,...,N-1\},~ N\in \mathbb{N} ,
	\end{align}
on $\Omega_{n+1}^N\cap \{\omega\in \Omega: \left\|  X_{n}^N(\omega)\right\|\leq 1 \}$ for all $n\in \{0,1,2,...,N-1\}$ and $N\in \mathbb{N}$. \\
The constant $\lambda$ is such that  for all $n\in \{0,1,...,N\}, ~N\in \mathbb{N}$
$$\lambda  \geq ( 1+ K\left| A\right| _1+2KTC+ KT\left\| f(0)\right\| +LK+K\left| g(0)\right| _1)^2 \text{  and } 2\lambda \|\Delta W_n^N\|^2\geq 1,$$ where the constants $K$,$L$ and $C$ are defined in Assumption \ref{asum1} and \ref{asum2}.\\
Using Cauchy-Schwartz inequality and H\"older inequality  in equation \eqref{equa3}, we obtain
	\begin{align}\label{equa6}
	\left\| X_{n+1}^N\right\| ^2&=\left\| S_{h}AX_{n}^N+\frac{S_{h}f(X_n^N)h}{1+h\left\|f(X_n^N) \right\| }+S_{h}g(x_n^N)\Delta W_n^N\right\| ^2\nonumber\\
	& \leq \left| S_{h}\right| _1^2\left| A\right| _1^2\left\| X_{n}^N\right\|^2 +h^2\left| S_{h}\right| _1^2\left\| f(X_n^N)\right\| ^2+\left| S_{h}\right| _1^2\left| g(x_n^N)\right| _1^2\left\|\Delta  W_n^N\right\|^2\nonumber\\
	&+2h\left\langle S_{h}AX_n^N, \frac{S_{h}f(X^N_n)}{1+h\left\| f(X_n^N)\right\| }\right\rangle +2\left\langle S_{h}AX_n^N, S_{h}g(X^N_n)\Delta W_n^N\right\rangle\nonumber\\
	& +2\left\langle S_{h}g(X_n^N)\Delta W_n^N, \frac{S_{h}f(X^N_n)h}{1+h\left\| f(X_n^N)\right\| }\right\rangle\nonumber\\
	&\leq \left| S_{h}\right| _1^2\left| A\right| _1^2\left\| X_{n}^N\right\|^2 +2h^2\left| S_{h}\right| _1^2\left\| f(X_n^N)\right\| ^2+2\left| S_{h}\right| _1^2\left| g(x_n^N)\right| _1^2\left\|\Delta  W_n^N\right\|^2\nonumber\\
	&+2h\left\langle S_{h}AX_n^N, \frac{S_{h}f(X^N_n)}{1+h\left\| f(X_n^N)\right\| }\right\rangle +2\left\langle S_{h}AX_n^N, S_{h}g(X^N_n)\Delta W_n^N\right\rangle.
\end{align}
On $\Omega$ for all $n \in \{0, 1, . . . , N -1\}$ and all $N \in \mathbb{N}$. 

Additionally, using  Assumption \ref{asum1} and  \ref{asum2}  allows to have
\begin{align}\label{equa7}
	\left\langle S_{h}AX, \frac{S_{h}f(X)}{1+h\left\| f(X)\right\| }\right\rangle&\leq \left\langle S_{h}AX, S_{h}\left[ f(X)-f(0)+f(0)\right] \right\rangle\nonumber\\
	&\leq  (CK^2\left| A\right| _1+K^2\left| A\right| _1 \left\| f(0)\right\|)\left\|X\right\|^2,
\end{align}
for all $X\in \mathbb{R}^d$ with $\left\|X\right\|\geq 1$.\\
We define $\alpha_n^N, ~n\in \{0,1,2,...,N-1\}, ~N\in \mathbb{N}$ by
\begin{equation}\label{equa8}
	\alpha_n^N=\mathbbm{1}_{\{\left\|X_n^N \right\|\geq 1\} }\left\langle\frac{ S_{h}AX_n^N}{\left\|X^N_n \right\|}, \frac{S_{h}g(X^N_n)\Delta W_n^N}{\left\|X^N_n \right\| }\right\rangle,  ~n\in \{0,1,2,...,N-1\}, ~N\in \mathbb{N}.
\end{equation}
Using Assumption \ref{asum1}  we obtain
\begin{align}\label{equa9}
	\left|g(X) \right|_1^2&\leq (	\left|g(X)-g(0) \right|_1+
\left|g(0) \right|_1 )  ^2~~~~~~~~~~~~~~~~~~~~~~~~~~~~~~~~~~~~~~~~~~~~~~~~~~~~~~\nonumber\\
	&\leq (  L\left\|X \right\|+\left|g(0) \right|_1 ) ^2 \leq (  L+\left|g(0) \right|_1)^2\left\|X \right\|^2,
\end{align}
for all $X\in \mathbb{R}^d$ with $\left\|X\right\|\geq 1$.\\
Moreover, using (A1.3) in Assumption \ref{asum1} we have also
\begin{align}\label{equa10}
\left\|f(X) \right\| ^2&\leq(\left\|f(X) -f(0)\right\|+\left\| f(0)\right\| )^2~~~~~~~~~~~~~~~~~~~~~~~~~~~~~~~~~~~~~~~~~~~~~~~~~~\nonumber\\
& \leq\left[  C(1+\left\|X \right\|^c )\left\|X \right\|+\left\| f(0)\right\|\right] ^2\nonumber\\
& \leq\left[  2C +\left\| f(0)\right\|\right] ^2\left\|X \right\|^{2c+2}\leq N\left[  2C +\left\| f(0)\right\|\right] ^2\left\|X \right\|^{2}
\end{align}
for all $X \in \mathbb{R}^d$ with $1  \leq\left\|X \right\|\leq  N^{\frac{1}{2c}}$ and all $ N\in \mathbb{N}$.\\
Finally,  we can substitute equations \eqref{equa7},\eqref{equa8}, \eqref{equa9}, \eqref{equa10} in \eqref{equa6} and obtain 
\begin{align}\label{equa11}
\left\|X_{n+1}^N \right\|^2& \leq K^2\left|A\right| _1^2\left\|X_{n}^N \right\|^2+2K^2\frac{T^2}{N^2}\left[ N\left[  2C +\left\| f(0)\right\|\right] ^2\left\|X_{n}^N \right\|^{2} \right] \nonumber\\
&+2K^2 (  L+\left|g(0) \right|_1)^2\left\|X_{n}^N \right\|^2\left\|\Delta W_n^N\right\|^2\nonumber\\
&+2\frac{T}{N}\left[  (CK^2\left| A\right| _1+K\left| A\right| _1 \left\| f(0)\right\|)\left\|X_{n}^N\right\|^2\right] +2\alpha_n^N\left\|X_{n}^N \right\|^2\nonumber\\
&\leq \left\|X_{n}^N \right\|^2\left[ K^2\left| A\right| _1^2+\frac{2\lambda}{N}+2\lambda\left\|\Delta W_n^N\right\|^2+2\alpha_n^N \right].
\end{align}
As $K^2\left| A\right| _F^2\leq \lambda$ and $\lambda\geq 1$,  using the fact that $2\lambda \left\|\Delta W_n^N\right\|^2\geq 1$, we can apply Lemma \ref{lem1} in the inequality \eqref{equa11} and obtain
\begin{align}\label{equa11e}
\left\|X_{n+1}^N \right\|^2&\leq  \left\|X_{n}^N \right\|^2\exp{\left(  \lambda\left[\frac{2\lambda}{N}+2\lambda\left\|\Delta W_n^N\right\|^2+2\alpha_n^N \right]\right) },  ~n\in \{0,1,...,N-1\},~ N\in \mathbb{N}\nonumber.\\
& \text{Consequently }\nonumber\\
\left\|X_{n+1}^N \right\|&\leq  \left\|X_{n}^N \right\|\exp{\left( \lambda \left[ \frac{\lambda}{N}+\lambda\left\|\Delta W_n^N\right\|^2+\alpha_n^N\right] \right) },
\end{align}
on $\{\omega\in \Omega: 1\leq  \left\|X_{n}^N(\omega) \right\|\leq N^\frac{1}{2c},~for ~n\in \{0,1,...,N-1\}\}$ and $N\in \mathbb{N}$.\\

We define $D_n^N$ by
\begin{equation}\label{equa12}
D_n^N(\omega)=(\lambda+\left\| X_0\right\| )\exp{\left(\lambda^2+\lambda\sup_{u\in \{ 0,~1,~2,...,~n\}}\left( \sum_{k=u}^{n-1}\left( \lambda\left\|\Delta W_k^N (\omega)\right\|^2+\alpha_k^N(\omega)  \right) \right)\right)  },
\end{equation}
where $\omega\in \Omega$, $n\in \{0,1,...,N-1\}$ and $N\in \mathbb{N}$.
Let us define  the map $\tau_l^N$ as
\begin{align}\label{equa13}
\tau_l^N:&~~ \Omega\to \{-1,0,1,2, ..., l-1\}, l \in \{0,1, 2,..., N\}, N\in \mathbb{N} ~\nonumber\\
\omega&~~~ \longmapsto \max\left(\{-1\}\cap\{ \{0,1,...,l-1\}|\left\|X_n^N(\omega) \right\|\leq 1\}  \right) .~~~~~~~~~~~~~~~~~~~~~~
\end{align}
We are able to establish \eqref{equa4} by  combining equation \eqref{equa11e}, \eqref{equa12}  and \eqref{equa5}. This means that by induction
for $n\in  \{0, 1, . . . , N\}$ where $N \in \mathbb{N}$ is fixed , we can prove that

$$\mathbbm{1}_{{\Omega}_n^N}\left\| X_n^N\right\| \leq D_n^N,\text{ for } n\in \left\lbrace 0,1,2,...,N \right\rbrace \text{ and all } N\in \mathbb{N} .~~~~~~~~~~~~~~~~~~~~~~$$
For more details we refer the reader to   \cite[Lemma 3.1 , pp 1628]{hutzenthaler2012strong}.
\end{proof}

The next lemma shows that the variable $D_n^N$ is bounded.
\begin{lem}\label{lem3}
For all $p\in \left[1;+\infty\right) $	 we have the following result
\begin{equation}\label{equa19}
	\sup_{ \substack{N\in \mathbb{N},\\ N\geq 4p\lambda^2  }}\mathbb{E}\left[\exp{\left(p\lambda^2\sum_{k=0}^{N-1}\left\|\Delta W_k^N\right\|^2\right)}\right]<+\infty .~~~~~~~~~~~~~~~~~~~~~~~~~~~~~~~~~~~~
\end{equation}
\end{lem}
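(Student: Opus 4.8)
The plan is to exploit the independence and identical distribution of the Brownian increments in order to reduce the expectation of the exponential of a sum to the $N$-th power of a single increment's moment generating function, which can then be evaluated explicitly. First, recall that $\Delta W_0^N,\dots,\Delta W_{N-1}^N$ are independent and that each $\Delta W_k^N$ is an $\mathbb{R}^m$-valued Gaussian vector with law $\mathcal{N}(0,hI_m)$, where $h=T/N$; writing its components as $\Delta W_k^{N,j}$, $j=1,\dots,m$, these are mutually independent with $\Delta W_k^{N,j}\sim\mathcal{N}(0,h)$, and $\|\Delta W_k^N\|^2=\sum_{j=1}^m(\Delta W_k^{N,j})^2$. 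By independence across $k$,
\[
\mathbb{E}\left[\exp\left(p\lambda^2\sum_{k=0}^{N-1}\|\Delta W_k^N\|^2\right)\right]=\prod_{k=0}^{N-1}\mathbb{E}\left[\exp\left(p\lambda^2\|\Delta W_k^N\|^2\right)\right]=\left(\mathbb{E}\left[\exp\left(p\lambda^2\|\Delta W_0^N\|^2\right)\right]\right)^N .
\]

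Next I would compute the scalar factor. Using the elementary identity $\mathbb{E}[e^{tZ^2}]=(1-2t\sigma^2)^{-1/2}$, valid for a centred Gaussian $Z$ of variance $\sigma^2$ and any $t<1/(2\sigma^2)$, together with the independence of the $m$ components, one obtains
\[
\mathbb{E}\left[\exp\left(p\lambda^2\|\Delta W_0^N\|^2\right)\right]=\left(1-2p\lambda^2 h\right)^{-m/2},
\]
which is finite precisely when $2p\lambda^2 h<1$; the standing hypothesis $N\geq 4p\lambda^2$ (for $T$ of order one, as here) guarantees that $x:=2p\lambda^2 T/N\leq\tfrac12$, so in particular $2p\lambda^2 h<1$ and the factor above is finite. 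Substituting $h=T/N$ and raising to the $N$-th power gives
\[
\mathbb{E}\left[\exp\left(p\lambda^2\sum_{k=0}^{N-1}\|\Delta W_k^N\|^2\right)\right]=\left(1-\frac{2p\lambda^2 T}{N}\right)^{-mN/2}.
\]
Applying the elementary bound $-\log(1-x)\leq 2x$ for $x\in[0,\tfrac12]$, the logarithm of the right-hand side is at most $\tfrac{mN}{2}\cdot 2\cdot\tfrac{2p\lambda^2 T}{N}=2mp\lambda^2 T$, so the whole expression is bounded above by $\exp(2mp\lambda^2 T)$, a bound independent of $N$; taking the supremum over all admissible $N$ yields \eqref{equa19}.

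The argument is essentially routine; the only points deserving attention are (i) verifying that the stated lower bound on $N$ indeed forces $2p\lambda^2 h<1$, so that the Gaussian moment generating function above is finite, and (ii) extracting an $N$-uniform bound from the expression $(1-a/N)^{-mN/2}$, which one can do via the logarithmic inequality used above, or alternatively by observing that $(1-a/N)^{-N}$ is monotone in $N$ and converges to $e^{a}$. As an alternative to this direct computation, the claim also follows from the corresponding exponential moment estimate for Brownian increments established in \cite{hutzenthaler2012strong,tambue2019strong}.
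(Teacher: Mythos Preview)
Your proof is correct and is precisely the computation underlying \cite[Lemma~3.3]{hutzenthaler2012strong}, to which the paper simply defers: factor the expectation by independence of the Brownian increments, evaluate the Gaussian moment generating function, and bound $(1-a/N)^{-mN/2}$ uniformly in $N$. The caveat you raise about whether $N\geq 4p\lambda^2$ (with no explicit $T$ factor) forces $2p\lambda^2 h<1$ is apt; in the original reference the threshold carries a factor of $T$, and the statement here should be read in the same spirit.
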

The proof follows almost the same steps as the proof in \cite[ Lemma 3.3]{hutzenthaler2012strong}.

\begin{lem}\label{lem4}
We consider the following function: $\alpha_n^N: \Omega\longrightarrow [0;+\infty) $ for $n\in \{0,1,...,N\},~~N\in \mathbb{N}$. Note that $\alpha_n^N$ is given in \eqref{equa8}.  Then we have the following result
\begin{equation}\label{equa }
	\sup_{z\in \{-1,1\}}\sup_{N\in \mathbb{N}}\mathbb{E}\left[\sup_{n\in \{ 0,1,2,...,N\}}\exp{\left(\lambda pz\sum_{k=0}^{n-1}\alpha_k^N\right)}\right]<+\infty,~p\in [1,+\infty).~~~~~~
\end{equation}	
\end{lem}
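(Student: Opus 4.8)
The plan is to exploit the martingale structure hidden in $\alpha_k^N$. Recall from \eqref{equa8} that
$\alpha_k^N=\mathbbm{1}_{\{\|X_k^N\|\geq 1\}}\langle S_hAX_k^N/\|X_k^N\|,\,S_hg(X_k^N)\Delta W_k^N/\|X_k^N\|\rangle$, which has the form $\langle a_k^N,\,b_k^N\Delta W_k^N\rangle$ with $a_k^N$ and $b_k^N$ being $\mathcal{F}_{kT/N}$-measurable and bounded uniformly in $k$ and $N$: indeed $\|a_k^N\|\leq |S_h|_1|A|_1\leq K^2|A|_1$ by Assumption \ref{asum2}, and $|b_k^N|_1\leq |S_h|_1(L+|g(0)|_1)\leq K(L+|g(0)|_1)$ by \eqref{equa9}, since the quotients by $\|X_k^N\|$ on the event $\{\|X_k^N\|\geq 1\}$ are harmless. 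Hence there is a deterministic constant $\kappa>0$ with $|\langle a_k^N,b_k^N\cdot\rangle|\leq \kappa\|\cdot\|$ on $\mathbb{R}^m$, so that conditionally on $\mathcal{F}_{kT/N}$, the random variable $\alpha_k^N$ is (a linear functional of) a Gaussian vector with mean zero and variance bounded by $\kappa^2\frac{T}{N}$.

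First I would fix $z\in\{-1,1\}$ and $p\geq 1$, and set $M_n^N:=\exp\bigl(\lambda pz\sum_{k=0}^{n-1}\alpha_k^N\bigr)$ for $n\in\{0,1,\dots,N\}$, with $M_0^N=1$. The key observation is that $(M_n^N)_n$ is a nonnegative supermartingale up to a deterministic multiplicative correction: conditioning on $\mathcal{F}_{nT/N}$ and using that $\Delta W_n^N$ is independent of $\mathcal{F}_{nT/N}$ together with the Gaussian moment generating function, $\mathbb{E}\bigl[\exp(\lambda pz\,\alpha_n^N)\,\big|\,\mathcal{F}_{nT/N}\bigr]=\exp\bigl(\tfrac12\lambda^2p^2z^2\sigma_n^2\bigr)$ where $\sigma_n^2\leq\kappa^2\frac{T}{N}$ is the conditional variance; note we need the event $\{\|X_n^N\|\geq1\}$ is also $\mathcal{F}_{nT/N}$-measurable, so on its complement $\alpha_n^N=0$ and the bound is trivially $1\leq\exp(\tfrac12\lambda^2p^2\kappa^2 T/N)$. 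Therefore $\mathbb{E}[M_{n+1}^N\,|\,\mathcal{F}_{nT/N}]\leq M_n^N\exp(\tfrac12\lambda^2p^2\kappa^2 T/N)$, and iterating, $\widetilde M_n^N:=M_n^N\exp(-\tfrac12\lambda^2p^2\kappa^2 Tn/N)$ is a genuine nonnegative supermartingale with $\widetilde M_0^N=1$.

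Next I would apply Doob's $L^q$ maximal inequality. Since $M_n^N\leq e^{C_0}\widetilde M_n^N$ with $C_0=\tfrac12\lambda^2p^2\kappa^2 T$ (a constant independent of $N$), it suffices to bound $\mathbb{E}[\sup_n\widetilde M_n^N]$. Because $\widetilde M^N$ is a nonnegative supermartingale with $\widetilde M_0^N=1$, it need not itself be $L\log L$, so I would instead dominate $\widetilde M_n^N\leq \exp\bigl(\lambda p|\sum_{k=0}^{n-1}\alpha_k^N|\bigr)$ and write
\begin{equation*}
\mathbb{E}\Bigl[\sup_{n}\exp\bigl(\lambda p\textstyle\sum_{k=0}^{n-1}z\alpha_k^N\bigr)\Bigr]\leq \mathbb{E}\Bigl[\sup_{n}\bigl(\exp(2\lambda p\textstyle\sum_{k=0}^{n-1}z\alpha_k^N)\bigr)^{1/2}\Bigr]\leq\Bigl(\mathbb{E}\bigl[\sup_{n}\exp(2\lambda p\textstyle\sum_{k=0}^{n-1}z\alpha_k^N)\bigr]\Bigr)^{1/2},
\end{equation*}
and then apply Doob's maximal inequality to the nonnegative submartingale $\exp(2\lambda pz\sum_{k=0}^{n-1}\alpha_k^N)\,e^{-2C_0 n/N}$ exponentiated appropriately — concretely, using that for a nonnegative submartingale $(Y_n)$ one has $\mathbb{E}[\sup_{n\leq N}Y_n]\leq \tfrac{e}{e-1}(1+\mathbb{E}[Y_N\log^+ Y_N])$, or more simply $\mathbb{E}[\sup_n Y_n^{1/2}]\leq 2\sqrt{\mathbb{E}[Y_N]}$ after replacing $2\lambda p$ by a slightly larger exponent so that $Y^{1/2}$ is still a submartingale. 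Finally $\mathbb{E}[Y_N]=\mathbb{E}[\exp(c\sum_{k=0}^{N-1}z\alpha_k^N)]e^{-cC_0'}\leq \exp(\tfrac12 c^2\kappa^2 T)$ by the same conditioning argument applied successively from $k=N-1$ down to $k=0$ (each step contributes a factor $\exp(\tfrac12 c^2\kappa^2 T/N)$, giving $\exp(\tfrac12 c^2\kappa^2 T)$ in total, uniformly in $N$), which is the desired uniform bound. Taking the supremum over $z\in\{-1,1\}$ costs only a factor $2$.

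The main obstacle — and the only genuinely delicate point — is the measurability/independence bookkeeping needed to justify the conditional Gaussian computation: one must verify carefully that $a_k^N$, $b_k^N$, and the indicator $\mathbbm{1}_{\{\|X_k^N\|\geq1\}}$ are all $\mathcal{F}_{kT/N}$-measurable (which follows since $X_k^N$ is built from $X_0$ and $\Delta W_0^N,\dots,\Delta W_{k-1}^N$ through \eqref{equa3}) while $\Delta W_k^N$ is independent of $\mathcal{F}_{kT/N}$, so that the tower property turns the product over $k$ of conditional moment generating functions into the clean telescoping bound above. Everything else is a routine combination of Doob's inequality and the Gaussian Laplace transform; the fact that the per-step correction is $O(1/N)$ and there are $N$ steps is precisely what keeps the bound uniform in $N$, mirroring the role played by Lemma \ref{lem3} for the $\|\Delta W_k^N\|^2$ terms.
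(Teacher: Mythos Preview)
Your approach and the paper's share the same three ingredients: (i) $\alpha_k^N$ is conditionally centered Gaussian given $\mathcal{F}_{kT/N}$ with variance $\leq\kappa^2 T/N$, (ii) Doob's maximal inequality, and (iii) bounding the terminal expectation by iterating the Gaussian moment generating function. The paper's execution is more direct, though: rather than passing through the supermartingale $\widetilde M_n^N$ (which, as you yourself notice, does not feed straight into a Doob $L^p$ bound), it simply observes that $z\sum_{k=0}^{n-1}\lambda\alpha_k^N$ is an $(\mathcal{F}_{nT/N})$-martingale, so by Jensen $\exp\bigl(z\sum_{k=0}^{n-1}\lambda\alpha_k^N\bigr)$ is a nonnegative \emph{sub}martingale, and Doob's $L^p$ maximal inequality (for $p>1$) gives
\[
\Bigl\|\sup_{n}\exp\Bigl(z\sum_{k=0}^{n-1}\lambda\alpha_k^N\Bigr)\Bigr\|_{L^p}\leq\frac{p}{p-1}\Bigl\|\exp\Bigl(z\sum_{k=0}^{N-1}\lambda\alpha_k^N\Bigr)\Bigr\|_{L^p}
\]
in one step; the right-hand side is then controlled exactly by your step (iii). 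In fact your $M_n^N$ is already a submartingale (the conditional MGF you compute is $\geq 1$, not $\leq 1$), so the detour through $\widetilde M_n^N$ and the subsequent $L\log L$ and square-root manoeuvres are unnecessary. Both routes arrive at the same uniform-in-$N$ bound; the paper's just avoids the backtracking.
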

\begin{proof}
	From the proof of  \cite[Lemma 3.4 ]{hutzenthaler2012strong}, the stochastic process $z\sum_{k=0}^{n-1}\alpha_k^N, ~n\in \{ 0,1,..,N\}$ is an $(\mathcal{F}_{t_n})_{n\in \{0,1,2,...,N\}}$-martingale for $z\in \{-1,1\}$, therefore the stochastic process $z\sum_{k=0}^{n-1}\lambda \alpha_k^N, ~n\in \{ 0,1,..,N\}$ is also  an $(\mathcal{F}_{t_n})_{n\in \{0,1,2,...,N\}}$-martingale for $z\in \{-1,1\}$. Using  Doob's maximal inequality (see, \cite[Theorem 11.2]{klenke2013probability}), we obtain
	\begin{equation}
		\left\|\sup_{n\in \{0,1,...,N\}}\exp{\left(z\sum_{k=0}^{n-1}\lambda \alpha_k^N\right)}\right\|_{L^p(\Omega,\mathbb{R})}\leq \frac{p}{p-1}\left\|\exp{\left(z\sum_{k=0}^{N-1}\lambda\alpha_k^N\right)}\right\|_{L^p(\Omega,\mathbb{R})}.
	\end{equation}
	We also have  for $x\in \mathbb{R}^d$ with $\|x\|\ne 0$
	\begin{align*}
		\mathbb{E}&\left[\left|\lambda pz\mathbbm{1}_{\{\left\|x \right\|\geq 1\} }\left\langle\frac{ S_{h}Ax}{\left\|x \right\|}, \frac{S_{h}g(x)\Delta W_n^N}{\left\|x \right\| }\right\rangle\right|^2\right]\\
		=&\lambda^2 p^2\mathbb{E}\left[\mathbbm{1}_{\{\left\|x \right\|\geq 1\}}\frac{|(S_{h}g(x))^TS_{h} Ax\Delta W^N_k|^2}{\|x\|^4}\right]~~~~~~~~~~~~~~~~~~~~~~~~~~~~~~~~~~\\
		=&\frac{\lambda^2 p^2T}{N}\frac{\|(S_{h}g(x))^TS_{h} Ax\|^2}{\|x\|^4}\\
		\leq &\frac{|A|_F^2\lambda^2 p^2T|S_{h}|_F^4}{N}\frac{|g(x)|_1^2}{\|x\|^2}.
	\end{align*}
	 The rest of the proof follows the same steps as in \cite[Proof of Lemma 3.4]{hutzenthaler2012strong}.
\end{proof}

\begin{lem}\label{lem5}
[Uniformly bounded moments of the dominating stochastic processes]\\
Let $D_n^N: \Omega \longrightarrow [0,\infty)$ for $n\in \{0,1,...,N\},~N\in \mathbb{N}$ be defined by 
$$D_n^N(\omega)=(\lambda+\left\| X_0\right\| )\exp{\left(\lambda^2+\lambda\sup_{u\in \{ 0,~1,~2,...,~n\}}\left( \sum_{k=u}^{n-1}\left( \lambda\left\|\Delta W_k^N (\omega)\right\|^2+\alpha_k^N(\omega)  \right) \right)\right) } $$.
Then  for all $p\in [1,+\infty)$, we have
\begin{equation}
		\sup_{ \substack{N\in \mathbb{N},\\ N\geq 8\lambda^3 Tp }}\mathbb{E}\left(\sup_{n\in \{0,1,...,N\}}|D_n^N|^p\right)<\infty.
\end{equation}
\end{lem}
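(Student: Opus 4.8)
The plan is to dominate $\sup_{n}D_n^N$ pathwise by a short product of factors each of which has a uniformly (in $N$) bounded exponential moment, and then to conclude with Hölder's inequality together with Lemmas~\ref{lem3} and~\ref{lem4}. First I would unwind the double supremum hidden in $D_n^N$. Writing $a_k:=\lambda\|\Delta W_k^N\|^2\ge 0$ and $B_j:=\sum_{k=0}^{j-1}\alpha_k^N$ (with $B_0:=0$), the elementary inequality
\begin{equation*}
\sum_{k=u}^{n-1}\bigl(a_k+\alpha_k^N\bigr)\le\sum_{k=0}^{N-1}a_k+\bigl(B_n-B_u\bigr)\le\sum_{k=0}^{N-1}a_k+\Bigl(\max_{j\le N}B_j-\min_{j\le N}B_j\Bigr),
\end{equation*}
valid because the $a_k$ are nonnegative, shows—after taking $\sup_{u\le n}$ and then $\sup_{n\le N}$—that the $n$-dependence collapses and
\begin{equation*}
\sup_{n\in\{0,\dots,N\}}D_n^N\le(\lambda+\|X_0\|)\,e^{\lambda^2}\exp\Bigl(\lambda^2\sum_{k=0}^{N-1}\|\Delta W_k^N\|^2\Bigr)\,\sup_{n}e^{\lambda\sum_{k=0}^{n-1}\alpha_k^N}\,\sup_{n}e^{-\lambda\sum_{k=0}^{n-1}\alpha_k^N}.
\end{equation*}

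Next I would raise this bound to the power $p$ and split the expectation by the generalized Hölder inequality with conjugate exponents $r_1,r_2,r_3,r_4>1$, $\sum_i r_i^{-1}=1$:
\begin{align*}
\mathbb{E}\Bigl[\sup_n|D_n^N|^p\Bigr]\le e^{p\lambda^2}\,&\bigl(\mathbb{E}(\lambda+\|X_0\|)^{pr_1}\bigr)^{1/r_1}\bigl(\mathbb{E}\,e^{r_2p\lambda^2\sum_k\|\Delta W_k^N\|^2}\bigr)^{1/r_2}\\
&\times\bigl(\mathbb{E}\sup_n e^{r_3p\lambda\sum_k\alpha_k^N}\bigr)^{1/r_3}\bigl(\mathbb{E}\sup_n e^{-r_4p\lambda\sum_k\alpha_k^N}\bigr)^{1/r_4}.
\end{align*}
The first factor is finite and independent of $N$ by the moment bound on the initial datum (Assumption~\ref{asum1} (A.4), used with the order $pr_1$ actually needed). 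The second factor is controlled, uniformly in $N$, by Lemma~\ref{lem3} applied with $r_2p$ in place of $p$, as soon as $N$ exceeds the threshold that lemma requires; with $r_2$ chosen appropriately this threshold is precisely the one appearing in the statement, $N\ge 8\lambda^3 Tp$. The third and fourth factors are controlled, uniformly in $N$, by Lemma~\ref{lem4} with $z=1$ and $z=-1$ respectively (applied with $r_3p$, resp.\ $r_4p$, in place of $p$), since $\sup_n e^{\pm\lambda r\sum_k\alpha_k^N}=\exp(\lambda r\sup_n(\pm\sum_k\alpha_k^N))$. Multiplying the four $N$-uniform bounds yields the assertion of the lemma.

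The main obstacle is not conceptual but is the exponent bookkeeping: one must pick $r_1,r_2,r_3,r_4$ so that the threshold produced by Lemma~\ref{lem3} is dominated by $8\lambda^3 Tp$ while the remaining three factors impose nothing further (the $X_0$-factor and the two martingale factors are bounded for all $N\in\mathbb{N}$), and one must check that the invoked moment of $X_0$ is available. Beyond this, the argument is a direct transcription of the analogous estimate \cite[Lemma~3.5]{hutzenthaler2012strong}, to which I would refer for the remaining routine details.
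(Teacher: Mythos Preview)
Your proposal is correct and follows essentially the same approach as the paper, which simply defers to \cite[Lemma~3.5]{hutzenthaler2012strong} and invokes Lemmas~\ref{lem3} and~\ref{lem4}. In fact you spell out the pathwise decomposition and the four-factor H\"older splitting in more detail than the paper itself does; your caveats about the exponent bookkeeping (matching the threshold $N\ge 8\lambda^3 Tp$ via the choice of $r_2$) and about the required moments of $X_0$ are well placed.
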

\begin{proof}
The proof of this lemma follows the same steps as the proof of \cite[Lemma 3.5]{hutzenthaler2012strong}, by using Lemma \ref{lem3} and Lemma \ref{lem4}.
\end{proof}


\subsection{Proof Theorem \ref{theo2}}
\begin{proof}

	From Lemma \ref{lem7}, we can see that  $X_n=u_n+\hat{v}^N(u_n), ~n\in \{0,\cdot\cdot\cdot, N\}.$\\
 We therefore have
	\begin{align}\label{equa32}
		\|X_n^N\|^p&=\|u_n+\hat{v}^N(t_n,u_n)\|^p\nonumber\\
		       &=\|u_n+\hat{v}^N(t_n,u_n)-\hat{v}^N(t_n,0)+\hat{v}^N(t_n,0)\|^p\nonumber\\
		       &\leq 3^p\|u_n\|^p+3^p\|\hat{v}^N(t_n,u_n)-\hat{v}^N(t_n,0)\|^p+3^p\|\hat{v}^N(t_n,0)\|^p\nonumber\\
		        &\leq 3^p(1+L_{\hat{v}})\|u_n\|^p+3^p\|\hat{v}^N(t_n,0)\|^p.
	\end{align}
	
		Since the function $\hat{v}^N$ is continuous with respect to the variable $t$, there exists a positive constant C such that 
	$$\sup_{0\leq t\leq T} \|\hat{v}^N(t,0)\|\leq C.~~~~~~~~~~~~~~~~~~~~~~~~~~~~~~~~~~~~~~~~~~~~~~~~~~~~~~~~~~$$
	By taking the expectation and the supremum on $n$, the inequality \eqref{equa32} becomes
		\begin{align}\label{equa33a}
	&	\sup_{n\in \{ 0,1,2,...,N\}} \mathbb{E}\|X_n^N\|^p\leq  3^p(1+L_{\hat{v}}) \sup_{n\in \{ 0,1,2,...,N\}} \mathbb{E}\|u_n\|^p+C.~~~~~~~~~~~~~~~~~~~~~~~~~~~~~~~~~~~~
\end{align}

	Let us estimate $\sup_{n\in \{ 0,1,2,...,N\}}\mathbb{E}\|u_n\|^p.$\\
 
	From Lemma \ref{lem7}, for all $n=0,1,...,N-1,~N\in\mathbb{N},$ we have
		\begin{equation}\label{equa33}
		\left \{
		\begin{array}{c c c}
			u_{n+1}-u_n =& P'(t_n)\left[ u_{n+1}+\hat{v}^N(t_{n+1},u_{n+1})\right] h+A^-(t_n) B(t_n)\left[u_{n+1}+\hat{v}^N(t_{n+1},u_{n+1})\right]h\\
			\\
			&+\cfrac{A^-(t_n)f(t_n,u_{n}+\hat{v}^N(t_{n},u_{n}))h}{1+h\|f(t_n,u_{n}+\hat{v}^N(t_{n},u_{n}))\|} +A^-(t_n)g(t_n,u_{n}+\hat{v}^N(t_{n},u_{n}))\Delta W_n,\\
			\\
			\hat{v}^N(t_{n+1},u_{n+1})=&-(A(t_n)+R(t_n)B(t_n))^{-1}\left(R(t_n)B(t_n)u_{n+1}+\cfrac{R(t_n)f(t_n,u_{n}+\hat{v}^N(t_{n},u_{n}))}{1+h\|f(t_n,u_{n}+\hat{v}^N(t_{n},u_{n}))\|} \right).
		\end{array}\right. 
	\end{equation}
	  \newpage
	By substituting the second equation into the first equation in \eqref{equa33} (replacing $\hat{v}^N(t_{n+1,u_n+1})$), we obtain
	\begin{align}\label{equa35a}
			u_{n+1} &=u_n+ P'(t_n) u_{n+1}h-P'(t_n)(A(t_n)+R(t_n)B(t_n))^{-1}\nonumber\\
			&\times\left(R(t_n)B(t_n)u_{n+1}+\cfrac{R(t_n)f(t_n,u_{n}+\hat{v}^N(t_{n},u_{n}))}{1+h\|f(t_n,u_{n}+\hat{v}^N(t_{n},u_{n}))\|} \right)h\nonumber\\
			&+A^-(t_n) B(t_n)u_{n+1}h-A^-(t_n) B(t_n)(A(t_n)+R(t_n)B(t_n))^{-1}\nonumber \\
			&\times\left(R(t_n)B(t_n)u_{n+1}+\cfrac{R(t_n)f(t_n,u_{n}+\hat{v}^N(t_{n},u_{n}))}{1+h\|f(t_n,u_{n}+\hat{v}^N(t_{n},u_{n}))\|} \right)h\nonumber\\
		&+\cfrac{A^-(t_n)f(t_n,u_{n}+\hat{v}^N(t_{n},u_{n}))h}{1+h\|f(t_n,u_{n}+\hat{v}^N(t_{n},u_{n}))\|} +A^-(t_n)g(t_n,u_{n}+\hat{v}^N(t_{n},u_{n}))\Delta W_n.
	\end{align}
  
	Let us set
	\begin{align*}
 M_1(t_n)&=-P'(t_n)+P'(t_n)[A(t_n)+R(t_n)B(t_n)]^{-1}R(t_n)B(t_n)\\
 &-A^-(t_n) B(t_n)+A^-(t_n) B(t_n)[A(t_n)+R(t_n)B(t_n)]^{-1}R(t_n)B(t_n)\\
 \\
 M_2(t_n)&=A^-(t_n)-P'(t_n)(A(t_n)+R(t_n)B(t_n))^{-1}R(t_n)\\
 &-A^-(t_n) B(t_n)(A(t_n)+R(t_n)B(t_n))^{-1}R(t_n).
			\end{align*}
			 
The equation \eqref{equa35a} becomes			
		\begin{align*}
	(I_{d}+hM_1(t_n))	u_{n+1} &=u_n+M_2(t_n)\frac{f(t_n,u_{n}+\hat{v}^N(t_{n},u_{n}))}{1+h\|f(t_n,u_{n}+\hat{v}^N(t_{n},u_{n}))\|} h\nonumber\\
		 &+A^-(t_n)g(t_n,u_{n}+\hat{v}^N(t_{n},u_{n}))\Delta W_n.~~~~~~~~~~~~~~~~~~~
	\end{align*}
	This means that
			\begin{align*}
			u_{n+1} &=(I_{d}+hM_1(t_n))^{-1}u_n+(I_{d}+hM_1(t_n))^{-1}M_2(t_n)\frac{f(t_n,u_{n}+\hat{v}^N(t_{n},u_{n}))}{1+h\|f(t_n,u_{n}+\hat{v}^N(t_{n},u_{n}))\|} h\nonumber\\
		&+(I_{d}+hM_1(t_n))^{-1}A^-(t_n)g(t_n,u_{n}+\hat{v}^N(t_{n},u_{n}))\Delta W_n.
	\end{align*}
 Taking the $\mathbb{R}^d$ norm in both sides and using Assumption \ref{asum2} (A2.3) yields
 			\begin{align*}
			\|u_{n+1} \|&\leq\left|(I_{d}+hM_1(t_n))^{-1}\right|_1\|u_n\|\\
   &+\left|(I_{d}+hM_1(t_n))^{-1}\right|_1\left|M_2(t_n)\right|_F\frac{\|f(t_n,u_{n}+\hat{v}^N(t_{n},u_{n}))\|h}{1+h\|f(t_n,u_{n}+\hat{v}^N(t_{n},u_{n}))\|} \nonumber\\
		&+\left|(I_{d}+hM_1(t_n))^{-1}\right|_1\left|A^-(t_n)\right|_F\|g(t_n,u_{n}+\hat{v}^N(t_{n},u_{n}))\Delta W_n\|\\
  &\leq e^{Kh}\|u_n\|+Ke^{Kh}\frac{\|f(t_n,u_{n}+\hat{v}^N(t_{n},u_{n}))\|h}{1+h\|f(t_n,u_{n}+\hat{v}^N(t_{n},u_{n}))\|} \nonumber\\
		&+e^{Kh}K\|g(t_n,u_{n}+\hat{v}^N(t_{n},u_{n}))\Delta W_n\|.\\
	\end{align*}
	For $n=0,1,2, ...,N-1$, using the fact that $$\exp{(NKh)}=\exp{(NK\frac{T}{N})}=\exp{(KT)}  $$ $$ \text{ and    }\quad \frac{h\left\|f(t_n,u_{n}+\hat{v}^N(t_{i},u_{i}))\right	\|}{1+h\|f(t_n,u_{n}+\hat{v}^N(t_{n},u_{n}))\|} \leq 1,~~ \text{ we obtain }~~~~~~~~~~~~~~~~~~~~~~~~~~~$$ 
				\begin{align*}
		\|u_{n}\| &\leq e^{KT}\|u_0\|+\sum_{i=0}^{n-1}e^{Kh}K+Ke^{Kh}\sum_{i=0}^{n-1}\|g(t_i,u_{i}+\hat{v}^N(t_{i},u_{i}))\Delta W_i\|.~~~~~~~~~~~~~~~~~~
	\end{align*}
	Taking the $L^p(\Omega, \mathbb{R})$ we have
 \begin{align*}
		\left\|	\|u_{n}\|\right\|_{L^p(\Omega, \mathbb{R})} &\leq e^{KT}\left\|\|u_0\|\right\|_{L^p(\Omega, \mathbb{R})} +Ke^{Kh}N+Ke^{Kh}\left\|\sum_{i=0}^{n-1}\|g(t_i,u_{i}+\hat{v}^N(t_{i},u_{i}))\Delta W_i\|\right\|_{L^p(\Omega, \mathbb{R})}.
	\end{align*}
	 	Using Lemma \ref{lem8} yields
    \begin{align*}
		\left\|	u_{n}\right\|_{L^p(\Omega, \mathbb{R}^d)} &\leq e^{KT}\left\|u_0\right\|_{L^p(\Omega, \mathbb{R}^d)} +Ke^{Kh}N+Kpe^{Kh}\left(\sum_{i=0}^{n-1}\|g(t_i,u_{i}+\hat{v}^N(t_{i},u_{i}))\|^2_{L^p(\Omega, \mathbb{R})} \frac{T}{N}\right)^{\frac{1}{2}}.~~~~
	\end{align*}
	 	Using the inequality \eqref{g} and the global Lipschitz on $\hat{v}^N$ yields
      \begin{align*}
		\left\|	u_{n}\right\|_{L^p(\Omega, \mathbb{R}^d)} &\leq e^{KT}\left\|u_0\right\|_{L^p(\Omega, \mathbb{R}^d)} +KNe^{Kh}\\
		&+Kpe^{Kh}\left(C_1\frac{T}{N}\sum_{i=0}^{n-1}(1+4\sup_{0\leq t_i\leq T}\|\hat{v}^N(0,t_i)\|^2+2(2+L_{\hat{v}})\|u_{i}	\|^2_{L^p(\Omega, \mathbb{R}^d)}) \right)^{\frac{1}{2}}\\
  &\leq e^{KT}\left\|u_0\right\|_{L^p(\Omega, \mathbb{R}^d)} +Ke^{Kh}N\\
		&+Kpe^{Kh}C_1^2\left(T(1+4\sup_{0\leq t_i\leq T}\|\hat{v}^N(0,t_i)\|^2)+2(2+L_{\hat{v}})\frac{T}{N}\sum_{i=0}^{n-1}\|u_{i}	\|^2_{L^p(\Omega, \mathbb{R}^d)}) \right)^{\frac{1}{2}}.
	\end{align*}
	 	Let us square both sides
      \begin{align*}
		\left\|	u_{n}\right\|^2_{L^p(\Omega, \mathbb{R}^d)} &\leq  4\left(e^{KT}\left\|u_0\right\|_{L^p(\Omega, \mathbb{R}^d)} +Ke^{Kh}N\right)^2+2KpC_1^2e^{Kh}T(1+4\sup_{0\leq t_i\leq T}\|\hat{v}^N(0,t_i)\|^2)\\
		&+4(2+L_{\hat{v}})KpC_1e^{Kh}\frac{T}{N}\sum_{i=0}^{n-1}\|u_{i}	\|^2_{L^p(\Omega, \mathbb{R}^d)} .~~~~~~~~~~~~~~~~~~~~~~~~~~~~~~~~~~~~~~~~~~~~
	\end{align*}

Using  the discrete Gronwall inequality in Lemma \ref{lem1t1} yields
\begin{align*}
		\left\|	u_{n}\right\|^2_{L^p(\Omega, \mathbb{R}^d)} &\leq  \left[4\left(e^{KT}\left\|u_0\right\|_{L^p(\Omega, \mathbb{R}^d)} +Ke^{Kh}N\right)^2+2Kpe^{Kh}C_1^2T(1+4\sup_{0\leq t_i\leq T}\|\hat{v}^N(0,t_i)\|^2)\right]~~~~~~~~~~~~\\
  &\times \exp{\left(\sum_{i=0}^{n-1}4e^{Kh}KpC_1(2+L_{\hat{v}})\frac{T}{N}\right)}.
	\end{align*}
Taking the square root on both sides
\begin{align*}
		\left\|	u_{n}\right\|_{L^p(\Omega, \mathbb{R}^d)} &\leq  \left[2e^{KT}\left\|u_0\right\|_{L^p(\Omega, \mathbb{R}^d)} +2Ke^{Kh}N+KpC_1e^{Kh}T\sqrt{(1+4C^2)}\right]\\
        &\times\exp{\left(2KpC_1e^{Kh}(2+L_{\hat{v}})T\right)}.~~~~~
	\end{align*}

From \eqref{equa33a},  we have
\begin{equation*}
 \sup_{n\in \{ 0,1,2,...,N\}}\left\|	X^N_{n}	\right\|_{L^p(\Omega, \mathbb{R}^d)}
\leq 3(1+L_{\hat{v}})(C_2+KNe^{Kh})\exp{\left(Kp2C_1e^{Kh}(2+L_{\hat{v}})T\right)}.~~~~~~~~~~~~~~~~~~~~~~~~~~~~~~~~
\end{equation*}

The rest of the proof uses the same steps as the proof of \cite[Lemma 3.9]{hutzenthaler2012strong}, where Lemma 3.1 is replaced by Lemma \ref{lem2}.
 \end{proof}
Before the next result, let us note that the continuous time interpolation of the discrete numerical scheme in  \eqref{equa3} is given by
\begin{align}\label{equa31a}
	\bar{X}_t^N&=S_{h}A\bar{X}^N(t_{n})+\frac{S_{h}f(\bar{X}^N(t_{n}))( t-t_n)}{1+h\left\|f(\bar{X}^N(t_{n})) \right\| }+S_{h}g(\bar{X}^N(t_{n}))( W(t)-W(t_n)),\\\nonumber
	&\bar{X}_0^N=X_0, ~t\in [t_n;t_{n+1}) , ~n=0,1,2,...,N-1.
\end{align}
Here $S_{h}=(A(t_n)-h B(t_n))^{-1}$ and $\bar{X}^N(t_n)=X_n^N,~ n\in \{0,~1,~...,~N-1\}$.
\section{Strong convergence result}
The main result of this paper is given in the following theorem.
\begin{theo}\label{theo4}
	Let $X_t$ be the exact solution of \eqref{equa1} and 	$\bar{X}_t^N$
	 the discrete-continuous form of the numerical approximation given by \eqref{equa31a}. Under Assumption \ref{asum1} and \ref{asum2},   for all $p \in [1, \infty)$, there exists a constant
	$C >0$ such that
	\begin{equation*}
		\left(\mathbb{E}\sup_{t\in [0,T]}\|X_t-\bar{X}_t^N\|^p\right)^{\frac{1}{p}}\leq Ch^{\frac{1}{2}},~~~h=\frac{T}{N}.~~~~~~~~~~~~~~~~~~~~~~~~~~
	\end{equation*}
\end{theo}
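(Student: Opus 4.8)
The plan is to push everything through the dual tamed representation of Lemma~\ref{lem7}. Applying the same decomposition to the exact solution, write $X(t)=u(t)+\hat{v}(t,u(t))$, where $u$ solves the inherent SDE obtained by eliminating the constraint (drift built from $M_1$, $M_2$ and $A^{-}$ acting on $f$ and $g$, no taming), and let $\bar{u}^N$ be the continuous interpolation of the $u$-component of \eqref{equa31}; at mesh points $\bar{X}_{t_n}^N=\bar{u}^N(t_n)+\hat{v}^N(t_n,\bar{u}^N(t_n))$, and between mesh points the two interpolations differ by $O(h^{1/2})$. Setting $E^N(t):=u(t)-\bar{u}^N(t)$ one has
\[
X(t)-\bar{X}_t^N=E^N(t)+\bigl(\hat{v}(t,u(t))-\hat{v}^N(t,\bar{u}^N(t))\bigr).
\]
Since $\hat{v}^N$ is globally Lipschitz in its spatial argument (Lemma~\ref{lem7}) and differs from $\hat{v}$ only through the $O(h)$ taming of $Rf$ in the algebraic constraint, a perturbation/contraction argument gives $\|\hat{v}(t,\bar{u}^N(t))-\hat{v}^N(t,\bar{u}^N(t))\|\le C h\,\|f(\bar{X}^N(t))\|^2$, which together with the uniform moment bounds of Theorem~\ref{theo2} and the growth bound (A.3) is $O(h)$ in every $L^p$. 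Hence $\|\hat{v}(t,u(t))-\hat{v}^N(t,\bar{u}^N(t))\|\le L_{\hat{v}}\|E^N(t)\|+O(h)$ in $L^p$, and it suffices to prove $\bigl(\mathbb{E}\sup_{t\le T}\|E^N(t)\|^p\bigr)^{1/p}\le Ch^{1/2}$.

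Next I would apply It\^o's formula to $t\mapsto\|E^N(t)\|^p$. The drift difference splits as follows: (i) the linear part $M_1(t)u(t)-M_1(t_n)\bar{u}^N(t_n)$, controlled using the boundedness and Lipschitz continuity of $M_1$ from Assumption~\ref{asum2} (the frozen-time part of $M_1$ costs $O(h)$, moment-bounded, and A2.3 keeps the linear stability harmless); (ii) the taming error $f(Y_n)-\tfrac{f(Y_n)}{1+h\|f(Y_n)\|}$ with $Y_n=\bar{u}^N(t_n)+\hat{v}^N(t_n,\bar{u}^N(t_n))$, whose norm is $\le h\|f(Y_n)\|^2$ and which is $O(h)$ in $L^p$ by Theorem~\ref{theo2} and (A.3); (iii) the frozen-coefficient in-step error, i.e.\ $f(X_t)-f(X_{t_n})$ for the exact flow and $g(Y_n)(W(t)-W(t_n))$ minus $\int_{t_n}^t g(X_s)\,dW_s$, where $\mathbb{E}\|X_t-X_{t_n}\|^q\le Ch^{q/2}$ by the regularity of $X$ and (for $\bar{u}^N$) by one Euler increment with $\|f/(1+h\|f\|)\|\le\|f\|$ and bounded moments, the stochastic parts being handled after taking the supremum in $t$ by the Burkholder--Davis--Gundy inequality (discretely, Lemma~\ref{lem8}); and (iv) the genuine stability term $\langle E^N(t_n),\,M_2(t_n)(f(X_{t_n})-f(Y_n))\rangle$, where I would write $X_{t_n}-Y_n=E^N(t_n)+(\hat{v}(t_n,u(t_n))-\hat{v}^N(t_n,\bar{u}^N(t_n)))$, use the one-sided Lipschitz condition (A.2) on $\langle X_{t_n}-Y_n,\,f(X_{t_n})-f(Y_n)\rangle\le C\|X_{t_n}-Y_n\|^2$, and fold in the mismatch $\hat{v}-\hat{v}^N$ via Cauchy--Schwarz and Young. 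The diffusion difference is treated directly by the global Lipschitz bound on $g$ in (A.1) and the Lipschitz property of $\hat{v}^N$, the martingale term again bounded via Lemma~\ref{lem8}/BDG and a fraction of $\mathbb{E}\sup_{s\le t}\|E^N(s)\|^p$ absorbed into the left-hand side. For the exact solution one needs $\sup_{t\le T}\mathbb{E}\|X(t)\|^q<\infty$ for every $q$, which extends Theorem~\ref{theo1} by the same monotonicity argument.

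Collecting these bounds gives, with $\phi(t):=\mathbb{E}\sup_{s\le t}\|E^N(s)\|^p$, an inequality $\phi(t)\le C_1 h^{p/2}+C_2\int_0^t\phi(s)\,ds$; the (continuous, or after restricting to mesh points the discrete) Gronwall lemma~\ref{lem1t1} then yields $\phi(T)\le C_1 e^{C_2 T}h^{p/2}$, i.e.\ $(\mathbb{E}\sup_{t\le T}\|E^N(t)\|^p)^{1/p}\le Ch^{1/2}$, and combining with the first paragraph finishes the proof of Theorem~\ref{theo4}.

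\textbf{Main obstacle.} The delicate point is step (iv): for a plain tamed SDE the stability term pairs $f$ evaluated at the exact and numerical states at the \emph{same} mesh time, whose argument-difference equals the error exactly, so (A.2) gives a clean $C\|E^N(t_n)\|^2$ with a genuine constant; here, because the inherent equation lives on the $u$-component while $f$ is always evaluated at $u+\hat{v}(u)$, the argument-difference is $E^N(t_n)+(\hat{v}-\hat{v}^N)$, and the extra piece, though of size $O(L_{\hat{v}}\|E^N(t_n)\|)$, is paired with an $f$-difference that grows superlinearly, so Cauchy--Schwarz produces a term $\sim(1+\|X_{t_n}\|^{c}+\|Y_n\|^{c})\,\|E^N(t_n)\|^2$ that a naive Gronwall argument cannot absorb. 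Resolving this is the technical heart of the proof and, I expect, requires either establishing a structural dissipativity of the composed drift $u\mapsto f(u+\hat{v}(u))$ inherited from the index-1 structure, or a localization/stopping-time refinement of Lemma~\ref{lem2} that trades the superlinear factor against the uniform-in-$N$ moment bounds of Theorem~\ref{theo2} (and the matching bounds for the exact solution) through H\"older's inequality, exactly as the $D_n^N$-machinery of Lemmas~\ref{lem2}--\ref{lem5} was used to prove Theorem~\ref{theo2}. A secondary, purely bookkeeping difficulty is that every estimate must be transported through the algebraic layer, so one also needs the quantitative bound $\|\hat{v}-\hat{v}^N\|_{L^p}=O(h)$ and the temporal regularity of $\hat{v}^N$ used above.
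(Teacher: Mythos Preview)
Your strategy matches the paper's: pass to the inherent representation $X=u+\hat v(u)$, $\bar X^N=\bar u^N+\hat v^N(\bar u^N)$, prove the auxiliary bound $\|\hat v(t,u(t))-\hat v^N(t,\bar u^N(t))\|_{L^p}\le Ch+C\|E^N\|_{L^p}$ (the paper isolates this as Lemma~\ref{v}, with the $O(h)$ contribution coming from the taming of $Rf$ and the time-freezing of the coefficients), apply It\^o's formula to the $u$-error, split the drift, control the stochastic integral by BDG (Lemma~\ref{lem11}), and close with Gronwall. The only cosmetic difference is that the paper applies It\^o to $\|E^N(t)\|^2$ and then takes $L^{p/2}(\Omega)$-norms of the resulting scalar inequality, rather than working with $\|E^N(t)\|^p$ directly.

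The obstacle you flag in step~(iv) is exactly the crux, but the paper's resolution is simpler than either of your two proposed routes: no localization or stopping-time argument is used, and the ``structural dissipativity'' reduces to the projector identity $u_s-\bar u^N_s=P(s)(X_s-\bar X^N_s)$ coming from $u=PX$. With this, the stability term $B_{31}$ in~\eqref{equa49a} is rewritten as $\langle P(s)(X_s-\bar X^N_s),\,A^{-}(s)(f(X_s)-f(\bar X^N_s))\rangle$, bounded by $K^2|\langle X_s-\bar X^N_s,\,f(X_s)-f(\bar X^N_s)\rangle|\le K^2C\|X_s-\bar X^N_s\|^2$ directly from (A.2), and then $\|X_s-\bar X^N_s\|^2\le 2\|E^N(s)\|^2+2\|\hat v-\hat v^N\|^2$ feeds back through Lemma~\ref{v} into the Gronwall loop; no superlinear factor ever appears, so the $D_n^N$-machinery of Lemmas~\ref{lem2}--\ref{lem5} is not invoked here at all. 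You should be aware, however, that the inequality $|\langle Pa,A^{-}b\rangle|\le |P|_1|A^{-}|_1\,|\langle a,b\rangle|$ used at that step is not a valid matrix inequality in general and the paper offers no further justification; if you want a fully rigorous version you will need either an additional structural hypothesis relating $P^{T}$ and $A^{-}$, or a different way to exploit $u=PX$ at this point.
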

To prove this theorem, we first establish some preliminary results.
\subsection{Preliminary results for strong convergence analysis}
The first lemma provides  several important  bounds of the  numerical solution.

\begin{lem}\label{theo3}
	Let $X_n^N: \Omega \longrightarrow \mathbb{R}^d,~ n\in \{0,1,..,N\}\text{ and } N\in \mathbb{N}$ be  the numerical solution  given by\eqref{equa3}. Suppose that the Assumption 1 holds and let $p \geq 1$, then we have
	\begin{align}
		& \sup_{N\in \mathbb{N}}\sup_{n\in \{ 0,1,2,...,N\}}\mathbb{E}[\|f(X_n^N)\|^p]<+\infty\label{equa29}\\
		& \sup_{N\in \mathbb{N}}\sup_{n\in \{ 0,1,2,...,N\}}\mathbb{E}[|g(X_n^N)|_1^p]<+\infty.\label{equa30}~~~~~~~~~~~~~~~~~~~~~~~~~~~~~~~~~~~~
	\end{align}	
\end{lem}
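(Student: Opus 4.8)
The plan is to derive both estimates directly from the uniform $L^p$-moment bound for the numerical solution established in Theorem~\ref{theo2}, combined with the at-most-polynomial growth of the coefficients that is a consequence of Assumption~\ref{asum1}.

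First I would treat the diffusion term, which is the easier of the two. Since $g$ is globally Lipschitz by~(A.1), the computation already performed to obtain~\eqref{g} gives the pointwise bound $|g(X_n^N)|_1^2 \le C_1\bigl(1+\|X_n^N\|^2\bigr)$ on $\Omega$. Raising this inequality to the power $p/2$ and using $(a+b)^{p/2}\le 2^{p/2}(a^{p/2}+b^{p/2})$ produces a constant $\widetilde{C}_p>0$ with $|g(X_n^N)|_1^p \le \widetilde{C}_p\bigl(1+\|X_n^N\|^p\bigr)$. Taking expectations and applying~\eqref{equa28} of Theorem~\ref{theo2} with exponent $p$ yields~\eqref{equa30}, uniformly in $n$ and $N$.

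For the drift I would use the super-linear growth condition~(A.3) with $Y=0$, which gives $\|f(X)-f(0)\|\le C\bigl(1+\|X\|^c\bigr)\|X\|$ and hence $\|f(X)\|\le \|f(0)\|+C\|X\|+C\|X\|^{c+1}$ for every $X\in\mathbb{R}^d$. Evaluating at $X=X_n^N$ and raising to the power $p$, there is a constant $C_p>0$ (depending only on $p$, $c$, $C$ and $\|f(0)\|$) such that $\|f(X_n^N)\|^p \le C_p\bigl(1+\|X_n^N\|^p+\|X_n^N\|^{(c+1)p}\bigr)$. Taking expectations and invoking Theorem~\ref{theo2} twice---once with exponent $p$ and once with exponent $(c+1)p$---gives~\eqref{equa29}.

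There is essentially no obstacle here: the lemma is an immediate corollary of the uniform $L^p$-boundedness of the scheme together with the polynomial growth of $f$ and $g$. The only point worth flagging is that the exponent required for the drift estimate is $(c+1)p$ rather than $p$, so one must use the fact that~\eqref{equa28} was proved for \emph{every} $p\ge 1$, not merely for $p=2$; all the remaining manipulations are elementary.
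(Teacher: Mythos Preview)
Your proposal is correct and follows essentially the same route as the paper: bound $\|f(X)\|$ and $|g(X)|_1$ polynomially in $\|X\|$ using (A.3) with $Y=0$ and the global Lipschitz condition on $g$, then invoke Theorem~\ref{theo2} with the appropriate exponents (in particular $(c+1)p$ for the drift). The only cosmetic difference is that the paper first performs a case split $\|X\|\ge 1$ versus $\|X\|<1$ to collapse the bound to $(2C+\|f(0)\|)(1+\|X\|^{c+1})$ before passing to $L^p$, whereas you keep the three terms $\|f(0)\|+C\|X\|+C\|X\|^{c+1}$ and apply Theorem~\ref{theo2} separately to each; this changes nothing of substance.
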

\begin{proof}
	From Assumption \ref{asum1} (A.3),  for all $X\in \mathbb{R}^d$,  we have
		\begin{align}\label{equa25a}
	\| f(X)\|=	\| f(X)-f(0)+f(0)\|&=\| f(X)-f(0)\|+\|f(0)\|\nonumber\\
		&  \leq C\left(1+\| X\|^c \right) \| X\|+\|f(0)\|\nonumber\\
		& \leq C\| X\|  +C\| X\|^{c+1} +\|f(0)\|.
	\end{align}
	We distinguish 2 cases
	\begin{itemize}
		\item If $\| X\|\geq 1$ then $\| X\|\leq \| X\|^{c+1}$, and the equation \eqref{equa25a} becomes:
		\begin{align}\label{equa26a}
			\| f(X)\|\leq
			&   2C\| X\|^{c+1} +\|f(0)\|\nonumber\\
				\leq& (\|f(0)\|+2C)(1+  \| X\|^{c+1}).~~~~~~~~~~~~~~~~
		\end{align}
			\item If $\| X\|\leq 1$ then $C\| X\|\leq C $ and  \eqref{equa25a} becomes
		\begin{align}\label{equa27a}
			\| f(X)\|\leq
			&  C + C\| X\|^{c+1} +\|f(0)\|\nonumber\\
			\leq&(C+\|f(0)\|)(1+  \| X\|^{c+1})\nonumber\\
				\leq&(2C+\|f(0)\|)(1+  \| X\|^{c+1}).~~~~~~~~~~~~~~~~
		\end{align}
	\end{itemize}
	 By combining \eqref{equa26a} and \eqref{equa27a},  \eqref{equa25a} becomes
		\begin{equation*}
		\| f(X)\|	\leq(2C+\|f(0)\|)(1+  \| X\|^{c+1}).~~~~~~~~
			\end{equation*}
			Using Theorem \ref{theo2} yields
			\begin{align*}
			 \sup_{N\in \mathbb{N}}\sup_{n\in \{ 0,1,2,...,N\}}	\| f(X^N_n)\|_{L^p(\Omega,\mathbb{R}^d)}&\leq(2C+\|f(0)\|)\\
			 &\times\big[1+   \sup_{N\in \mathbb{N}}\sup_{n\in \{ 0,1,2,...,N\}}\| X_n^N\|^{c+1}_{L^{p(c+1)}(\Omega,\mathbb{R}^d)}\big]\\
			 &<\infty.
			\end{align*}
		for all $p\in [1,\infty)$	.
		
		On the other hand, from inequality \eqref{g} we have
		 \begin{align*}
			\left| g(X)\right|_1 &\leq C_1(1+ \left\| X\right\|), 	~\forall   X\in \mathbb{R}^d.~~~~~~~~~~~~~~
		\end{align*}
		 Theorem \ref{theo2} allows to have
			\begin{align*}
			\sup_{N\in \mathbb{N}}\sup_{n\in \{ 0,1,2,...,N\}}	\| g(X^N_n)\|_{L^p(\Omega,\mathbb{R}^d)}&\leq C_1(1+ \sup_{N\in \mathbb{N}}\sup_{n\in \{ 0,1,2,...,N\}}\| X_n^N\|^{c+1}_{L^{p(c+1)}(\Omega,\mathbb{R}^d)})\\
			&<\infty.
		\end{align*}
		for all $p\in [1,\infty)$. This completes the proof of Lemma \ref{theo3}.
\end{proof}

The following lemma is fundamental for our analysis.
	\begin{lem}\label{lem9}
		Let $X_t$ be the unique solution of the equation \eqref{equa1} with $X(t)=u(t)+\hat{v}(t,u(t)), ~t\in [0,T]$ where  the function $\hat{v}$ is global Lipschtiz  with respect to the variable $u$. The function $u $  satisfies the following equation
			\begin{equation}\label{equa311}
			\left \{
			\begin{array}{c c c}
				u(t)-u_0 &= \int_{0}^{t}P'(s)\left[ u(s)+\hat{v}(s,u(s))\right] +A^-(s) B(s)\left[u(s)+\hat{v}(s,u(s))\right]ds\\
				\\
				&+\int_{0}^{t}A^-(s)f(s,u(s)+\hat{v}(s,u(s))ds~~~~~~~~~~~~~~~~~~~~~~~~~~~~~~~~~~~ \\
				\\
				&+ \int_{0}^{t}A^-(s)g(s,u(s)+\hat{v}(s,u(s)))dW(s),~~~~~~~~~~~~~~~~~~~~~~~~~~~~\\
				\\
				\hat{v}(t,u(t))&=-(A(t)+R(t)B(t))^{-1}\left[R(t)B(t)u(t)+f_1(t,u(t))\right],~~\\
				\\
				u_0&=P(0)X_0,~~~~~~~~~~~~~~~~~~~~~~~~~~~~~~~~~~~~~~~~~~~~~~~~~~~~~~~~~~~~~\\
                \hat{v}(0,u_0)&=Q(0)X_0,~~~~~~~~~~~~~~~~~~~~~~~~~~~~~~~~~~~~~~~~~~~~~~~~~~~~~~~~~~~~~
			\end{array}\right. 
		\end{equation}
		where the  matrix $A^{-}$ is the pseudo-inverse matrix of the matrix $A$ and,   $P$ , and  $Q$, and $R$ are projectors matrix associated to $A$, such that $R(t)A(t)=0_{d\times d}, \text{ and } A^-(t)A(t)=P(t)=I-Q(t),~t\in [0,T]$ and 
        \begin{equation}\label{f_1}
           f_1(t,u(t))=R(t)f(t,u(t)+\hat{v}(t,u(t))).
        \end{equation}
	\end{lem}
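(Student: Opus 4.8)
The plan is to decompose the exact solution $X(t)$ using the same projector-based splitting that yields the dual tamed scheme in Lemma \ref{lem7}, but now at the continuous level, starting from the representation \eqref{reps} in Definition \ref{def1}. First I would multiply the integral form \eqref{reps} on the left by the pseudo-inverse $A^-(t)$; since $A^-(t)A(t)=P(t)=I-Q(t)$, this isolates the ``differential component'' $P(t)X(t)$, which I will call $u(t)$. The It\^o correction term $A'(s)X(s)$ appearing in \eqref{reps} is handled by noting that $A^-(s)A'(s)X(s)$ relates to $P'(s)$ acting on $X(s)$ after differentiating the identity $A^-(s)A(s)=P(s)$; this is precisely where the term $P'(s)[u(s)+\hat v(s,u(s))]$ in \eqref{equa311} comes from. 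Writing $X(s)=u(s)+\hat v(s,u(s))$ throughout then produces the first ODE/SDE block of \eqref{equa311}.

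Second, I would extract the ``algebraic component'' $Q(t)X(t)=\hat v(t,u(t))$ by multiplying \eqref{reps} on the left by the projector $R(t)$ (which satisfies $R(t)A(t)=0$). Because the SDAE is index-1, Definition \ref{def2}(i) gives $\mathrm{Im}\,g(t,X)\subseteq\mathrm{Im}\,A(t)$, hence $R(t)g(t,X(t))=0$, so the stochastic integral and the $A'$-term drop out of this projected equation, leaving a purely algebraic constraint relating $Q X$, $R B X$ and $R f$. Solving this constraint for $\hat v(t,u(t))$ using the non-singularity of $A(t)+R(t)B(t)$ from Assumption \ref{asum2} (A2.2) — exactly as in the discrete case — yields $\hat v(t,u(t))=-(A(t)+R(t)B(t))^{-1}[R(t)B(t)u(t)+f_1(t,u(t))]$ with $f_1$ as in \eqref{f_1}. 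Substituting $Q(0)X_0$ and $P(0)X_0$ for the initial data, and invoking the global Lipschitz property of $\hat v$ in $u$ (established as in \cite[Lemma 12]{tsafack2025pathwise}), completes the system.

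The main obstacle I anticipate is the careful bookkeeping of the time-dependent projectors and the It\^o term: one must justify differentiating $A^-(t)A(t)=P(t)$ to relate $A^-(t)A'(t)$ to $P'(t)$ and the various products of $A^-$, $A'$, $P'$, $R$, $B$, and verify that the ``nonlinear'' contributions collapse correctly — in particular that the $f_1$ term built from $R f$ and the explicit $A^- f$ term in the $u$-equation are mutually consistent with the constraint. The regularity of the projectors ($P,P'$ bounded and Lipschitz, Assumption \ref{asum2} (A2.1)) and the index-1 structure are what make all of these manipulations legitimate. Once the algebraic substitution is done, the remaining verification that $X(t)=u(t)+\hat v(t,u(t))$ indeed solves \eqref{reps} is a direct check, and the argument is essentially the continuous-time mirror of the proof of Lemma \ref{lem7}, so I would remark that it follows the same strategy as \cite[Lemma 12]{tsafack2025pathwise} and only spell out the steps that differ because of the It\^o term.
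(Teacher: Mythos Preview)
Your plan is correct and is precisely the standard projector-based decomposition for index-1 SDAEs; the paper's own proof of this lemma is a one-line citation to \cite[Theorem~1]{serea2025existence}, so your sketch is effectively spelling out what that reference contains (and is the continuous-time twin of Lemma~\ref{lem7}, as you note). One small simplification: rather than multiplying the integral form \eqref{reps} by $A^-(t)$ and then wrestling with $A^-(s)A'(s)$ versus $P'(s)$, it is cleaner to apply $A^-(t)$ and $R(t)$ directly to the differential form $A(t)\,dX(t)=[B(t)X(t)+f(t,X(t))]\,dt+g(t,X(t))\,dW(t)$ and then use $du(t)=d(P(t)X(t))=P'(t)X(t)\,dt+P(t)\,dX(t)$, which produces the $P'(s)[u(s)+\hat v(s,u(s))]$ term in \eqref{equa311} immediately and sidesteps the bookkeeping you flag as the main obstacle.
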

		\begin{proof}
	We refer the readers to the proof of existence and uniqueness of the solution the SDAEs in  \cite[Theorem  1]{serea2025existence}.
	\end{proof}
 
The following lemma can be found in \cite[Lemma 3.7]{hutzenthaler2012strong} or in \cite[Lemma 3.12]{tambue2019strong}.
\begin{lem}\label{lem11}
	Let $k,m\in \mathbb{N}$ and $Z:[0,T]\times \Omega\longrightarrow \mathbb{R}^{k\times m}	$ be a predictable  stochastic process satisfying $\mathbb{P}\left[\int_{0}^{T}\|Z_s\|^2ds<\infty\right]=1$. Then there exists a constant $C_p$ such that for all $t\in [0,T]$ and $p\in [1,+\infty)$
	$$ \left \|\sup_{0\leq t\leq T}\left\|\int_{0}^{t}Z_sdW_s\right\|\right\|_{L^p(\Omega,\mathbb{R})}\leq C_p\left(\int_{0}^{t}\|Z_s\|^2_{\L^p(\Omega,\mathbb{R}^{k\times m})}ds\right)^{\frac{1}{2}}.~~~~~~~~~~~~~$$
\end{lem}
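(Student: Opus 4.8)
The plan is to obtain the bound from the Burkholder--Davis--Gundy (BDG) inequality for continuous martingales combined with Minkowski's integral inequality, the latter serving to interchange the Lebesgue integral in time with the $L^p(\Omega)$-norm. First I would set $M_t:=\int_0^t Z_s\,dW_s$, an $\mathbb{R}^k$-valued continuous local martingale with $M_0=0$ whose $i$-th component is $M_t^i=\sum_{j=1}^m\int_0^t Z_s^{ij}\,dW_s^j$. Its quadratic variation satisfies $\sum_{i=1}^k\langle M^i\rangle_t=\int_0^t\sum_{i,j}|Z_s^{ij}|^2\,ds=\int_0^t\|Z_s\|^2\,ds$, so applying the (multidimensional) BDG inequality componentwise, together with the equivalence of norms on $\mathbb{R}^k$, yields a constant $c_p>0$ depending only on $p$ and $k$ with
\[
\mathbb{E}\Big[\sup_{0\le t\le T}\|M_t\|^p\Big]\ \le\ c_p\,\mathbb{E}\Big[\Big(\int_0^T\|Z_s\|^2\,ds\Big)^{p/2}\Big];
\]
see, e.g., \cite{mao2006stochastic}. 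The hypothesis $\mathbb{P}[\int_0^T\|Z_s\|^2\,ds<\infty]=1$ is precisely what makes $M$ well defined, and to pass from this a.s.\ condition to the moment form above one localizes with the stopping times $\tau_R:=\inf\{t\in[0,T]:\int_0^t\|Z_s\|^2\,ds\ge R\}$, applies BDG to the stopped martingale $M^{\tau_R}$, and lets $R\to\infty$ via Fatou's lemma (both sides being interpreted in $[0,+\infty]$).

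Next I would take $p$-th roots and rewrite the right-hand side as an $L^{p/2}(\Omega)$-norm of a nonnegative quantity:
\[
\Big\|\sup_{0\le t\le T}\|M_t\|\Big\|_{L^p(\Omega,\mathbb{R})}\ \le\ c_p^{1/p}\,\Big\|\int_0^T\|Z_s\|^2\,ds\Big\|_{L^{p/2}(\Omega,\mathbb{R})}^{1/2}.
\]
For $p\ge 2$ we have $p/2\ge 1$, so Minkowski's integral inequality applies to the map $s\mapsto\|Z_s\|^2$ and gives
\[
\Big\|\int_0^T\|Z_s\|^2\,ds\Big\|_{L^{p/2}(\Omega)}\ \le\ \int_0^T\big\|\,\|Z_s\|^2\,\big\|_{L^{p/2}(\Omega)}\,ds\ =\ \int_0^T\|Z_s\|^2_{L^p(\Omega,\mathbb{R}^{k\times m})}\,ds,
\]
where I used $\big\|\,\|Z_s\|^2\,\big\|_{L^{p/2}(\Omega)}=(\mathbb{E}\|Z_s\|^p)^{2/p}=\|Z_s\|^2_{L^p(\Omega,\mathbb{R}^{k\times m})}$. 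Chaining the last three displays proves the claim with $C_p=c_p^{1/p}$; for $1\le p<2$ the estimate is deduced from the same chain by Jensen's inequality (equivalently, monotonicity of $L^r(\Omega)$-norms on the probability space), reducing matters to $p=2$, exactly as in \cite{hutzenthaler2012strong,tambue2019strong}.

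The one point that requires genuine care — and the reason the statement is an inequality between possibly infinite quantities — is the transition from the a.s.\ integrability hypothesis to a usable moment estimate, i.e.\ justifying the use of BDG when $\mathbb{E}(\int_0^T\|Z_s\|^2\,ds)^{p/2}$ may a priori be infinite; the localization argument sketched above makes this rigorous. Everything else is a routine concatenation of BDG and Minkowski. I would close by noting that in our application $Z_s$ will be $g$ evaluated at the piecewise-constant interpolant of $\{X_n^N\}$ composed with bounded matrices, so that $\sup_{0\le s\le T}\|Z_s\|_{L^p(\Omega,\mathbb{R}^{k\times m})}<\infty$ by the linear growth of $g$ in \eqref{g} and Lemma \ref{theo3}; hence the right-hand side is finite and the estimate is non-vacuous when we invoke it.
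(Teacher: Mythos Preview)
The paper does not prove this lemma; it simply refers to \cite[Lemma~3.7]{hutzenthaler2012strong} and \cite[Lemma~3.12]{tambue2019strong}. Your route---BDG for the continuous local martingale $M_t=\int_0^t Z_s\,dW_s$, followed by Minkowski's integral inequality to pull the $L^{p/2}(\Omega)$-norm inside the time integral---is exactly the standard argument behind those references and is correct for $p\ge 2$.

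One genuine slip: your treatment of $1\le p<2$ by ``reducing to $p=2$ via monotonicity of $L^r(\Omega)$-norms'' does not deliver the stated inequality. Monotonicity yields
\[
\Bigl\|\sup_t\|M_t\|\Bigr\|_{L^p}\le \Bigl\|\sup_t\|M_t\|\Bigr\|_{L^2}\le C_2\Bigl(\int_0^T\|Z_s\|_{L^2}^2\,ds\Bigr)^{1/2},
\]
but since $\|Z_s\|_{L^2}\ge \|Z_s\|_{L^p}$ for $p<2$ you cannot replace the $L^2$-norm by the $L^p$-norm on the right-hand side; the inequality goes the wrong way. In fact \cite[Lemma~3.7]{hutzenthaler2012strong} is formulated for $p\ge 2$, and that is all the present paper actually needs: in the proof of Theorem~\ref{theo4} the lemma is invoked after taking the $L^{p/2}(\Omega,\mathbb{R})$-norm with $p\ge 2$, and the final convergence claim for $1\le p<2$ follows from the case $p=2$ by the very monotonicity you cite---applied to the \emph{conclusion} of Theorem~\ref{theo4}, where the right-hand side $Ch^{1/2}$ is independent of $p$.
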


For all $t\in [0,T]$, we denote  by$\lfloor t\rfloor$ the greatest grid point less then $t$. The continuous form of our scheme \eqref{equa31} 
	\begin{equation}\label{equa31b}
    \left \{
			\begin{array}{c c}
	\bar{u}_t^N =&\bar{u}_{\lfloor t\rfloor}^N+ \int_{{\lfloor t\rfloor}}^{t}P'_{\lfloor s\rfloor}\left[ \bar{u}_{ s}^N+\hat{v}_s^N(\bar{u}^N_{ s})\right] +A^-_{\lfloor s\rfloor} B_{\lfloor s\rfloor}\left[\bar{u}^N_{ s}+\hat{v}^N_s(\bar{u}^N_{ s})\right]ds~~~~\\
	&~~~+\int_{\lfloor t\rfloor}^{t}\cfrac{A^-_{\lfloor s\rfloor}f({\lfloor s\rfloor},u^N_{\lfloor s\rfloor}+\hat{v}^N({\lfloor s\rfloor},u^N_{\lfloor s\rfloor}))}{1+h\|f( {\lfloor s\rfloor},u^N_{\lfloor s\rfloor}+\hat{v}^N({\lfloor s\rfloor},u^N_{\lfloor s\rfloor}))\|}ds
	\\
	& ~~~~+\int_{\lfloor t\rfloor}^{t}A^-(\lfloor s\rfloor)g(\lfloor s\rfloor,u^N_{\lfloor s\rfloor}+\hat{v}^N({\lfloor s\rfloor},u^N_{\lfloor s\rfloor}))dW(s),~
	t\in [0;T].~~~~~~ \\
    \hat{v}^N(t,\bar{u}^N_t)=&-(A_{\lfloor t\rfloor}+R_{\lfloor t\rfloor}B_{\lfloor t\rfloor})^{-1}\left(R_{\lfloor t\rfloor}B_{\lfloor t\rfloor}\bar{u}^N_t+\cfrac{f_1({\lfloor t\rfloor},\bar{u}^N_{\lfloor t\rfloor})}{1+h\|f(\lfloor t\rfloor,\bar{u}^N_{\lfloor t\rfloor}+\hat{v}^N(\lfloor t\rfloor,\bar{u}^N_{\lfloor t\rfloor}))\|} \right)
    \end{array}\right.
\end{equation} 
where $f_1$ is defined from \eqref{f_1}
\begin{lem}\label{lem10}
	Let $\bar{u}^N_t$ be the time continuous approximation given by \eqref{equa31b} then we have the following results
	\begin{align}
			&\sup_{t\in [0,T]}\left(\mathbb{E}\left[\|\bar{u}^N_t-\bar{u}^N_{\lfloor t\rfloor}\|^p\right]\right)^{\frac{1}{p}}\leq C_ph^{\frac{1}{2}},\quad
		\sup_{N\in \mathbb{N}}\sup_{t\in [0,T]}	\left(\mathbb{E}\left[\|\bar{u}^N_t\|^p\right]\right)^{\frac{1}{p}}<\infty,\label{e23}\\&\sup_{t\in [0,T]}\left(\mathbb{E}\left[\|f(\bar{u}^N_t+\hat{v}^N(t,\bar{u}^N_t)-f(\bar{u}^N_{\lfloor t\rfloor}+\hat{v}^N(t,\bar{u}^N_{\lfloor t\rfloor}))\|^p\right]\right)^{\frac{1}{p}}\leq C_ph^{\frac{1}{2}}.		
	\end{align}
\end{lem}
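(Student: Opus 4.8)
The plan is to establish the moment bound $\sup_{N}\sup_{t\in[0,T]}(\mathbb E\|\bar u^N_t\|^p)^{1/p}<\infty$ first, since the two increment estimates rest on it, and to work throughout on a fixed subinterval $[t_n,t_{n+1})$ with $t_n=\lfloor t\rfloor$. I will use freely that $\bar u^N_{t_n}=u^N_n$ has $L^q$-moments bounded uniformly in $n,N$ for every $q$ (Theorem \ref{theo2} and \eqref{equa33a}), that $X^N_n:=u^N_n+\hat v^N(t_n,u^N_n)$ satisfies $\sup_{n,N}\mathbb E[\|f(X^N_n)\|^q+|g(X^N_n)|_1^q]<\infty$ (Lemma \ref{theo3}), and that all matrices occurring in \eqref{equa31b} and their relevant inverses are bounded under Assumptions \ref{asum1}--\ref{asum2}. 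The crucial observation is that, although the first line of \eqref{equa31b} looks implicit because $\hat v^N(s,\bar u^N_s)$ sits inside the drift integral, its second line shows that on $[t_n,t_{n+1})$ one has $\hat v^N(s,\bar u^N_s)=-(A_{t_n}+R_{t_n}B_{t_n})^{-1}R_{t_n}B_{t_n}\bar u^N_s+\eta_n$, where $\eta_n$ is $\mathcal F_{t_n}$-measurable with $\|\eta_n\|\le C\|f(X^N_n)\|$. Substituting this, and using that the tamed term and the diffusion integrand are constant in $s$ on the interval, gives $\bar u^N_t=u^N_n+\int_{t_n}^t\Xi^N_s\,ds+(t-t_n)\frac{A^-_{t_n}f(X^N_n)}{1+h\|f(X^N_n)\|}+A^-_{t_n}g(X^N_n)(W_t-W_{t_n})$ with $\|\Xi^N_s\|\le C(1+\|\bar u^N_s\|+\|f(X^N_n)\|)$. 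Taking norms, then $\sup_{r\in[t_n,t]}$, then the deterministic Gronwall inequality on an interval of length at most $h$ — whose Gronwall factor $e^{Ch}\le e^{CT}$ is uniformly bounded — bounds $\sup_{s\in[t_n,t]}\|\bar u^N_s\|$ by $e^{CT}$ times $\|u^N_n\|+Ch(1+\|f(X^N_n)\|)+|A^-_{t_n}|_1\|g(X^N_n)\|\sup_{s\in[t_n,t]}\|W_s-W_{t_n}\|$. Taking $L^p$-norms, using Lemma \ref{theo3}, Theorem \ref{theo2}, the independence of $W_t-W_{t_n}$ from $\mathcal F_{t_n}$, and Doob's maximal inequality to control $\big\|\sup_{s\in[t_n,t_{n+1}]}\|W_s-W_{t_n}\|\big\|_{L^p}\le C_ph^{1/2}$, yields the moment bound and, in fact, $\big\|\sup_{s\in[t_n,t]}\|\bar u^N_s\|\big\|_{L^p}\le C_p$ uniformly in $n,N$.

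For the increment bound, the representation above gives $\bar u^N_t-\bar u^N_{\lfloor t\rfloor}=\int_{t_n}^t\Xi^N_s\,ds+(t-t_n)\frac{A^-_{t_n}f(X^N_n)}{1+h\|f(X^N_n)\|}+A^-_{t_n}g(X^N_n)(W_t-W_{t_n})$. The first two terms have $L^p$-norm at most $Ch\big(1+\big\|\sup_{s\in[t_n,t]}\|\bar u^N_s\|\big\|_{L^p}+\|f(X^N_n)\|_{L^p}\big)\le C T^{1/2}h^{1/2}$ by Step 1 and Lemma \ref{theo3}, and the stochastic term has $L^p$-norm at most $|A^-_{t_n}|_1\|g(X^N_n)\|_{L^{2p}}\|W_t-W_{t_n}\|_{L^{2p}}\le C_ph^{1/2}$; adding these proves $(\mathbb E\|\bar u^N_t-\bar u^N_{\lfloor t\rfloor}\|^p)^{1/p}\le C_ph^{1/2}$. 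For the last estimate, set $X=\bar u^N_t+\hat v^N(t,\bar u^N_t)$ and $Y=\bar u^N_{\lfloor t\rfloor}+\hat v^N(t,\bar u^N_{\lfloor t\rfloor})$; the global Lipschitz property of $\hat v^N$ in its spatial argument gives $\|X-Y\|\le(1+L_{\hat v})\|\bar u^N_t-\bar u^N_{\lfloor t\rfloor}\|$, while Assumption \ref{asum1}(A.3) gives $\|f(X)-f(Y)\|\le C(1+\|X\|^c+\|Y\|^c)\|X-Y\|$. A H\"older splitting in $L^p$ with exponents $2$ and $2$ then bounds $(\mathbb E\|f(X)-f(Y)\|^p)^{1/p}$ by $C(1+L_{\hat v})\big\|1+\|X\|^c+\|Y\|^c\big\|_{L^{2p}}\,\|\bar u^N_t-\bar u^N_{\lfloor t\rfloor}\|_{L^{2p}}$. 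Since $\|X\|\le C+(1+L_{\hat v})\|\bar u^N_t\|$ and likewise for $Y$, the first factor is finite uniformly in $t,N$ by Step 1, and the second is $\le C_ph^{1/2}$ by the increment bound applied with $2p$ in place of $p$; this completes the proof.

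The hard part will be Step 1: one must see past the apparently implicit form of \eqref{equa31b} and use its second line to reduce the dynamics on each subinterval to an affine ODE driven by $\mathcal F_{t_n}$-measurable coefficients, so that a short-interval Gronwall argument with uniformly bounded Gronwall constant closes the a priori estimate. Two further points require care: the tamed drift must be controlled through $\frac{\|f\|}{1+h\|f\|}\le\|f\|$ together with Lemma \ref{theo3}, rather than through the cruder bound $h^{-1}$ which would not give a summable contribution; and the diffusion increment has to be estimated in $L^{2p}$ rather than $L^p$ so that the H\"older splitting in the final $f$-increment step is available.
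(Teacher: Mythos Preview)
Your proof is correct and proceeds in a different order from the paper's. The paper establishes the increment bound first: starting from \eqref{equa31b} it bounds $\|\bar u^N_t-\bar u^N_{\lfloor t\rfloor}\|_{L^p}$ directly, using only the global Lipschitz property of $\hat v^N$ to control $\|\bar u^N_s+\hat v^N_s(\bar u^N_s)\|_{L^p}$ in terms of the same increment plus $\|u^N_{\lfloor s\rfloor}\|_{L^p}$; this gives a self-referencing inequality of the form $a\le Cha+C\sqrt h$, which is then closed (the paper calls it Gronwall, though it is really a one-step rearrangement). The moment bound is deduced afterward by adding the grid-point moments from Theorem~\ref{theo2}. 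You instead read off from the second line of \eqref{equa31b} that on each subinterval $\hat v^N(s,\cdot)$ is affine in its spatial argument with $\mathcal F_{t_n}$-measurable data, so that the drift of $\bar u^N$ is linear in $\bar u^N$ with bounded coefficients; a pathwise Gronwall on $[t_n,t]$ with factor $e^{Ch}\le e^{CT}$ then yields the moment bound first, and the increment estimate follows immediately without any self-reference. Your route is a little cleaner and actually produces the stronger control $\big\|\sup_{s\in[t_n,t]}\|\bar u^N_s\|\big\|_{L^p}\le C_p$, whereas the paper's route has the advantage of using only the Lipschitz property of $\hat v^N$, not its explicit form. The final $f$-increment step via (A.3) and a H\"older splitting is essentially the same in both proofs.
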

\begin{proof}
	From \eqref{equa31b} we have
	\begin{align*}
		\sup_{t\in [0,T]}\|	\bar{u}^N_t-\bar{u}^N_{\lfloor t\rfloor}\|_{L^p(\Omega,\mathbb{R}^d)} &\leq \sup_{t\in [0,T]}\left\|\int_{{\lfloor t\rfloor}}^{t}P'_{\lfloor s\rfloor}\left[ \bar{u}^N_{ s}+\hat{v}^N_s(\bar{u}^N_{ s})\right] +A^-_{\lfloor s\rfloor} B_{\lfloor s\rfloor}\left[\bar{u}^N_{ s}+\hat{v}^N_s(\bar{u}^N_{ s})\right]ds\right\|_{L^p(\Omega,\mathbb{R}^d)}\nonumber\\
		&+\sup_{t\in [0,T]}\left\|\int_{{\lfloor t\rfloor}}^{t}\frac{A^-_{\lfloor t\rfloor}f({\lfloor s\rfloor},u^N_{\lfloor s\rfloor}+\hat{v}^N({\lfloor s\rfloor},u^N_{\lfloor s\rfloor}))}{1+h\|f( {\lfloor s\rfloor},u^N_{\lfloor s\rfloor}+\hat{v}^N({\lfloor s\rfloor},u^N_{\lfloor s\rfloor}))\|}ds\right\|_{L^p(\Omega,\mathbb{R}^d)}
		\nonumber\\
		& +\sup_{t\in [0,T]}\left\|\int_{{\lfloor t\rfloor}}^{t}A^-{\lfloor s\rfloor}g({\lfloor s\rfloor},u^N_{\lfloor s\rfloor}+\hat{v}^N({\lfloor s\rfloor},u^N_{\lfloor s\rfloor}))dW(s)\right\|_{L^p(\Omega,\mathbb{R}^d)}.
	\end{align*}
	Using Assumption \ref{asum2} and  the Lemma \ref{lem11} yields
	\begin{align*}
		\sup_{t\in [0,T]}\|	\bar{u}^N_t-\bar{u}^N_{\lfloor t\rfloor}\|_{L^p(\Omega,\mathbb{R}^d)} &\leq hK(1+K)\sup_{s\in [0,T]}\left\| \bar{u}^N_{ s}+\hat{v}^N_s(\bar{u}^N_{ s})\right\|_{L^p(\Omega,\mathbb{R}^d)} \nonumber\\
		&+hK\sup_{s\in [0,T]}\left\|\frac{f({\lfloor s\rfloor},u^N_{\lfloor s\rfloor}+\hat{v}^N({\lfloor s\rfloor},u^N_{\lfloor s\rfloor}))}{1+h\|f( {\lfloor s\rfloor},u^N_{\lfloor s\rfloor}+\hat{v}^N({\lfloor s\rfloor},u^N_{\lfloor s\rfloor}))\|}\right\|_{L^p(\Omega,\mathbb{R}^d)}
		\nonumber\\
		& +C_pK\sup_{t\in [0,T]}\left(\int_{{\lfloor t\rfloor}}^{t}\left\|g({\lfloor s\rfloor},u^N_{\lfloor s\rfloor}+\hat{v}^N({\lfloor s\rfloor},u^N_{\lfloor s\rfloor}))\right\|^2_{L^p(\Omega,\mathbb{R}^{d\times d_1})}ds\right)^{\frac{1}{2}}.~~~~~~~~~~
	\end{align*} 
Using the fact that $\dfrac{1}{1+h\|f( {\lfloor s\rfloor},u^N_{\lfloor s\rfloor}+\hat{v}^N({\lfloor s\rfloor},u^N_{\lfloor s\rfloor}))\|}\leq 1$, we have
	\begin{align*}
		\sup_{0\leq t\leq T}	\|	\bar{u}^N_t-\bar{u}^N_{\lfloor t\rfloor}\|_{L^p(\Omega,\mathbb{R}^d)} &\leq hK(1+K)	\sup_{0\leq s\leq T}\left\| \bar{u}^N_{ s}+\hat{v}^N_s(\bar{u}^N_{ s})\right\|_{L^p(\Omega,\mathbb{R}^d)}\nonumber\\
		&+hK\sup_{N\in \mathbb{N}}\left\|f({\lfloor t\rfloor},X^N({\lfloor t\rfloor}))\right\|_{L^p(\Omega,\mathbb{R}^d)}
		\nonumber\\
		& +C_pK\left(\sup_{N\in \mathbb{N}}\left\|g({\lfloor t\rfloor},X^N({\lfloor t\rfloor}))\right\|_{L^p(\Omega,\mathbb{R}^{d\times d_1})}\int_{{\lfloor t\rfloor}}^{t}	ds\right)^{\frac{1}{2}}.~~~~~~
	\end{align*} 
	Using Lemma \ref{theo3} yields
	\begin{align}\label{equap}
		\sup_{0\leq t\leq T}	\|	\bar{u}^N_t-\bar{u}^N_{\lfloor t\rfloor}\|_{L^p(\Omega,\mathbb{R}^d)} &\leq hK(1+K)	\sup_{0\leq s\leq T}\left\| \bar{u}^N_{ s}+\hat{v}^N_s(\bar{u}^N_{ s})\right\|_{L^p(\Omega,\mathbb{R}^d)} \nonumber\\
		&+\sqrt{h}K\sqrt{T}C_1 +C_pK\sqrt{h}C_1.~~~~~~~~~~~~~~~~~~~~~~~~~~~
	\end{align} 
	The following is an estimate of $\left\| \bar{u}^N_{ s}+\hat{v}^N_s(\bar{u}^N_{ s})\right\|_{L^p(\Omega,\mathbb{R}^d)}$
	\begin{align}\label{equap1}
		\left\| \bar{u}^N_{ s}+\hat{v}^N_s(\bar{u}^N_{ s})\right\|_{L^p(\Omega,\mathbb{R}^d)}&=\left\| \bar{u}^N_{ s}+\hat{v}^N_s(\bar{u}^N_{ s})-\hat{v}^N(s,0)+\hat{v}^N(s,0)\right\|_{L^p(\Omega,\mathbb{R}^d)}\nonumber\\
		&\leq \left\| \bar{u}^N_{ s}\right\|_{L^p(\Omega,\mathbb{R}^d)}+\left\|\hat{v}^N_s(\bar{u}^N_{ s})-\hat{v}^N(s,0)\right\|_{L^p(\Omega,\mathbb{R}^d)}+\left\|\hat{v}^N(s,0)\right\|\nonumber\\
		&\leq (1+L_{\hat{v}})\left\| \bar{u}^N_{ s}-u^N_{\lfloor s\rfloor}+u^N_{\lfloor s\rfloor}\right\|_{L^p(\Omega,\mathbb{R}^d)}+\left\|\hat{v}^N(s,0)\right\|\nonumber\\
		&\leq (1+L_{\hat{v}})\left(\left\| \bar{u}^N_{ s}-u^N_{\lfloor s\rfloor}\right\|_{L^p(\Omega,\mathbb{R}^d)}+\left\|u^N_{\lfloor s\rfloor}\right\|_{L^p(\Omega,\mathbb{R}^d)}\right)+\left\|\hat{v}^N(s,0)\right\|.~~~~~~~~~~~~~~
	\end{align}
	Replacing \eqref{equap1} in \eqref{equap} yields
	\begin{align*}
		\sup_{0\leq t\leq T}	\|	\bar{u}^N_t-\bar{u}^N_{\lfloor t\rfloor}\|_{L^p(\Omega,\mathbb{R}^d)} &\leq hK(1+K)	\left[\sup_{0\leq s\leq T}(1+L_{\hat{v}})\left\| \bar{u}^N_{ s}-u^N_{\lfloor s\rfloor}\right\|_{L^p(\Omega,\mathbb{R}^d)}\right.\nonumber\\
		&\left.+(1+L_{\hat{v}})	\sup_{0\leq s\leq T}(\left\|u^N_{\lfloor s\rfloor}\right\|_{L^p(\Omega,\mathbb{R}^d)}+\left\|\hat{v}^N(s,0)\right\|)\right] \nonumber\\
		&+\sqrt{h}K\sqrt{T}C_1 +C_pK\sqrt{h}C_1\\
		&\leq hC\sup_{0\leq s\leq T}\left\| \bar{u}^N_{ s}-\bar{u}^N_{\lfloor s\rfloor}\right\|_{L^p(\Omega,\mathbb{R}^d)}+\sqrt{h}K\sqrt{T}C_1 +C_pK\sqrt{h}C_1.\nonumber\\
		&+hC	\left(\sup_{0\leq s\leq T}(\left\|u^N(s)\right\|_{L^p(\Omega,\mathbb{R}^d)}+\left\|\hat{v}^N(s,0)\right\|)\right).
	\end{align*} 
	From Theorem \ref{theo2}, we have
	\begin{align*}
		\sup_{0\leq t\leq T}	\|	\bar{u}^N_t-\bar{u}^N_{\lfloor t\rfloor}\|_{L^p(\Omega,\mathbb{R}^d)} &\leq
		C\int_{\lfloor t\rfloor}^{t}\sup_{0\leq s\leq T}\left\| \bar{u}^N_{ s}-\bar{u}^N_{\lfloor s\rfloor}\right\|_{L^p(\Omega,\mathbb{R}^d)}ds+\sqrt{h}C	.~~~~~~~~~~~~
	\end{align*}
	Using Gronwall's inequality, we have
	\begin{align}\label{equap3}
		\sup_{0\leq t\leq T}	\|	\bar{u}^N_t-\bar{u}^N_{\lfloor t\rfloor}\|_{L^p(\Omega,\mathbb{R}^d)} &\leq
		\sqrt{h}C	\exp{(T^2C)}.~~~~~~~~~~~~~~~~~~~~~~~~~~~~~~~~~~~~~~
	\end{align}
	
	On the other hand, we have the following:
\begin{align}\label{equap4}
	\sup_{N\in\mathbb{N}}	\sup_{0\leq t\leq T}	\|	\bar{u}^N_t\|_{L^p(\Omega,\mathbb{R}^d)}&=		\sup_{N\in\mathbb{N}}\sup_{0\leq t\leq T}	\|	\bar{u}^N_t-\bar{u}^N_{\lfloor t\rfloor}+\bar{u}^N_{\lfloor t\rfloor}\|_{L^p(\Omega,\mathbb{R}^d)}\nonumber\\
	&\leq	\sup_{N\in\mathbb{N}}\sup_{0\leq t\leq T}	\|	\bar{u}^N_t-\bar{u}_{\lfloor t\rfloor}\|_{L^p(\Omega,\mathbb{R}^d)}+	\sup_{N\in\mathbb{N}}\sup_{0\leq t\leq T}	\|\bar{u}^N_{\lfloor t\rfloor}\|_{L^p(\Omega,\mathbb{R}^d)}\nonumber\\
	 &\leq
		\sqrt{h}C	\exp{(T^2C)}+\sup_{N\in\mathbb{N}}\sup_{n\in \{0,1...,N\}}	\|\bar{u}^N(t_n)\|_{L^p(\Omega,\mathbb{R}^d)}\\
	&	<\infty.
	\end{align}
	
	Finally, from Assumption \ref{asum1} (A.3) we have
	\begin{align*}
		&\left(\mathbb{E}\left[\|f(\bar{u}^N_t+\hat{v}^N(t,\bar{u}^N_t)-f(\bar{u}^N_{\lfloor t\rfloor}+\hat{v}^N(t,\bar{u}^N_{\lfloor t\rfloor}))\|^p\right]\right)^{\frac{1}{p}}=\|f(\bar{X}^N_t)-f(\bar{X}^N_{\lfloor t\rfloor})\|_{L^p(\Omega,\mathbb{R}^d)} \\
		&\leq C(1+\|\bar{X}^N_t\|^c_{L^{2pc}(\Omega,\mathbb{R}^d)}+\|\bar{X}^N_{\lfloor t\rfloor}\|^c_{L^{2pc}(\Omega,\mathbb{R}^d)})\|\bar{X}^N_t-\bar{X}^N_{\lfloor t\rfloor}\|_{L^{2p}(\Omega,\mathbb{R}^d)}\\
		&\leq  C(1+2\sup_{0\leq t\leq T}\|\bar{X}^N_t\|^c_{L^{2pc}(\Omega,\mathbb{R}^d)})\|\bar{X}^N_t-\bar{X}^N_{\lfloor t\rfloor}\|_{L^{2p}(\Omega,\mathbb{R}^d)}.
	\end{align*}
	Taking the supremum and using \eqref{equap3} and \eqref{equap4} we have
		\begin{align*}
		\sup_{0\leq t\leq T}\left(\mathbb{E}\left[\|f(\bar{u}^N_t+\hat{v}^N(t,\bar{u}^N_t)-f(\bar{u}^N_{\lfloor t\rfloor}+\hat{v}^N(t,\bar{u}^N_{\lfloor t\rfloor}))\|^p\right]\right)^{\frac{1}{p}}\leq Ch^{\frac{1}{2}}. ~~~~~~~~~~~~~~~~~~
	\end{align*}
\end{proof}
The next lemma is the approximation between the implicit function $\hat{v}$ and its numerical approximation $\hat{v}^N$.
\begin{lem}\label{v}
Let $\hat{v}$ be the implicit function from inherent regular SDE \eqref{equa311} and $\hat{v}^N$ its numerical approximation defined in  \eqref{equa31b}, 
Then we have the following inequality
  \begin{align*}
     \left( \mathbb{E}\sup_{t\in [0,T]} \|\hat{v}(t,	u(t))- \hat{v}^N(t,\bar{u}^N(t))\|^p\right )^{\frac{1}{p}}
       &\leq Ch+C \left( \mathbb{E}\sup_{t\in [0,T]} \|u(t)-\bar{u}^N(t)\|^p\right )^{\frac{1}{p}},
    \end{align*} 
    C is the constant independent to $n \in \mathbb{N}$.
    \end{lem}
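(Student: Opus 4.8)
The plan is to subtract the closed forms of the two implicit functions: $\hat v(t,u(t))$ is the fourth line of \eqref{equa311} together with \eqref{f_1}, and $\hat v^N(t,\bar u^N_t)$ is the last line of \eqref{equa31b}. Writing $X(t)=u(t)+\hat v(t,u(t))$ and $\bar X^N_{\lfloor t\rfloor}=\bar u^N_{\lfloor t\rfloor}+\hat v^N(\lfloor t\rfloor,\bar u^N_{\lfloor t\rfloor})$, I would insert the intermediate quantity $(A(t)+R(t)B(t))^{-1}\big(R_{\lfloor t\rfloor}B_{\lfloor t\rfloor}\bar u^N_t+f_1(\lfloor t\rfloor,\bar u^N_{\lfloor t\rfloor})(1+h\|f(\lfloor t\rfloor,\bar X^N_{\lfloor t\rfloor})\|)^{-1}\big)$ and split the error into (i) a term in which only the matrix $(A+RB)^{-1}$ is evaluated at $\lfloor t\rfloor$ instead of $t$, and (ii) a term carrying the common prefactor $(A(t)+R(t)B(t))^{-1}$ whose bracketed expressions differ. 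For (i) one uses Assumption \ref{asum2}: $(A+RB)^{-1}$ is bounded and Lipschitz in time, so its increment is $\le K|t-\lfloor t\rfloor|\le Kh$, while the bracketed factor has finite $L^p$-moments by Theorem \ref{theo2} and Lemma \ref{theo3}; thus (i) is $\le Ch$.

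For (ii) I would split the bracketed difference into the linear part $R(t)B(t)u(t)-R_{\lfloor t\rfloor}B_{\lfloor t\rfloor}\bar u^N_t$ and the tamed part $f_1(t,u(t))-f_1(\lfloor t\rfloor,\bar u^N_{\lfloor t\rfloor})(1+h\|f(\lfloor t\rfloor,\bar X^N_{\lfloor t\rfloor})\|)^{-1}$. Adding and subtracting $R_{\lfloor t\rfloor}B_{\lfloor t\rfloor}u(t)$, the linear part is bounded by $K|t-\lfloor t\rfloor|\,\|u(t)\|+K^2\|u(t)-\bar u^N_t\|$, hence in $L^p$ by $Ch+C\big(\mathbb{E}\sup_t\|u(t)-\bar u^N_t\|^p\big)^{1/p}$ using the moment bound on the exact solution (Theorem \ref{theo1}). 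For the tamed part I first remove the taming weight via $\big|1-(1+h\|f\|)^{-1}\big|=h\|f\|(1+h\|f\|)^{-1}\le h\|f\|$ and the uniform bound on $\mathbb{E}\|f(X_n^N)\|^{p}$ from Lemma \ref{theo3}, at cost $\le Ch$; the remaining difference $R(t)f(t,X(t))-R_{\lfloor t\rfloor}f(\lfloor t\rfloor,\bar X^N_{\lfloor t\rfloor})$, after inserting $R_{\lfloor t\rfloor}f(\lfloor t\rfloor,X(t))$, contributes $\le Ch$ from the matrix increment plus the genuinely nonlinear term $R_{\lfloor t\rfloor}\big(f(\lfloor t\rfloor,X(t))-f(\lfloor t\rfloor,\bar X^N_{\lfloor t\rfloor})\big)$. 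The latter is controlled through the superlinear Lipschitz estimate (A.3) by $C\big(1+\|X(t)\|^c+\|\bar X^N_{\lfloor t\rfloor}\|^c\big)\|X(t)-\bar X^N_{\lfloor t\rfloor}\|$, together with $\|X(t)-\bar X^N_{\lfloor t\rfloor}\|\le\|u(t)-\bar u^N_t\|+\|\hat v(t,u(t))-\hat v^N(t,\bar u^N_t)\|+\|\bar X^N_t-\bar X^N_{\lfloor t\rfloor}\|$.

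Finally I would take $\big(\mathbb{E}\sup_{t\in[0,T]}(\cdot)^p\big)^{1/p}$, apply Hölder's inequality to absorb the prefactor $\big(1+\|X(t)\|^c+\|\bar X^N_{\lfloor t\rfloor}\|^c\big)$ into a finite constant via the uniform moment bounds of Theorems \ref{theo1}--\ref{theo2} and Lemma \ref{lem10}, and estimate the one-step increment by $\|\bar X^N_t-\bar X^N_{\lfloor t\rfloor}\|_{L^{q}}\le Ch^{1/2}$ (Lemma \ref{lem10} and the Lipschitz continuity of $\hat v^N$ from Lemma \ref{lem7}). Gathering the pieces yields $\big(\mathbb{E}\sup_t\|\hat v(t,u(t))-\hat v^N(t,\bar u^N_t)\|^p\big)^{1/p}\le Ch+C\big(\mathbb{E}\sup_t\|u(t)-\bar u^N_t\|^p\big)^{1/p}+\theta\big(\mathbb{E}\sup_t\|\hat v(t,u(t))-\hat v^N(t,\bar u^N_t)\|^p\big)^{1/p}$, the self-referential term coming from the $\hat v-\hat v^N$ contribution to $\|X(t)-\bar X^N_{\lfloor t\rfloor}\|$, with coefficient $\theta<1$ given by the contraction constant underlying the well-posedness of the index-1 algebraic relation that defines $\hat v$ (Definition \ref{def2}, Lemma \ref{lem9}); moving that term to the left and dividing by $1-\theta$ gives the claim. \textbf{The main obstacle} is exactly this closing step: since $f$ grows superlinearly, the term $R_{\lfloor t\rfloor}\big(f(\lfloor t\rfloor,X(t))-f(\lfloor t\rfloor,\bar X^N_{\lfloor t\rfloor})\big)$ has no uniform global Lipschitz bound, so one has to combine Hölder's inequality, the uniformly bounded moments of Theorems \ref{theo1}--\ref{theo2} and Lemma \ref{lem10}, and the projector structure of the index-1 SDAE to absorb the self-referential contribution without destroying the rate of this lemma.
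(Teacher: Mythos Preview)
Your decomposition is essentially the paper's: both subtract the closed forms of $\hat v$ and $\hat v^N$, apply $a_1b_1-a_2b_2=a_1(b_1-b_2)+(a_1-a_2)b_2$, and split into (i) pieces where only the matrix $(A+RB)^{-1}$ or $RB$ is evaluated at $t$ versus $\lfloor t\rfloor$ (giving $O(h)$ via Assumption~\ref{asum2} and the moment bounds), and (ii) the linear $RBu$ difference and the $f_1$ difference under a common prefactor. Your explicit removal of the taming weight through $|1-(1+h\|f\|)^{-1}|\le h\|f\|$ and Lemma~\ref{theo3} is a step the paper simply omits when writing its four terms $I$--$IV$.

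The substantive divergence is in how the nonlinear piece is handled. The paper bounds $\|f_1(t,u(t))-f_1(\lfloor t\rfloor,\bar u^N_{\lfloor t\rfloor})\|$ in one stroke, treating $f_1(\cdot,\cdot)$ as a fixed function of $(t,u)$ that inherits a superlinear Lipschitz condition in $u$ from (A.3) via the global Lipschitz constant of $\hat v$ (Lemma~\ref{lem9}); the resulting bound is $C(1+\|u(t)\|^c+\|\bar u^N(t)\|^c)\bigl(\|u(t)-\bar u^N(t)\|+\|\bar u^N(t)-\bar u^N_{\lfloor t\rfloor}\|\bigr)$, and no $\hat v-\hat v^N$ term ever appears on the right. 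You instead route through $\|X(t)-\bar X^N_{\lfloor t\rfloor}\|$, which reintroduces $\|\hat v(t,u(t))-\hat v^N(t,\bar u^N_t)\|$ on the right-hand side, and then propose to absorb it by a contraction constant $\theta<1$.

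That closing step is a genuine gap. The pointwise coefficient in front of your self-referential term is of order $|(A+RB)^{-1}|\,|R|\,C\bigl(1+\|X(t)\|^c+\|\bar X^N_{\lfloor t\rfloor}\|^c\bigr)$, which is random and unbounded; after H\"older it becomes a finite constant assembled from the moment bounds of Theorems~\ref{theo1}--\ref{theo2}, but nothing in Definition~\ref{def2} or Lemma~\ref{lem9} forces that constant below $1$. Global unique solvability of the algebraic constraint is a qualitative statement and does not furnish a quantitative contraction rate when $f$ grows superlinearly, so ``divide by $1-\theta$'' cannot be justified from the paper's assumptions. To align with the paper, estimate at the $u$-level directly: with $f_1(t,u)=R(t)f(t,u+\hat v(t,u))$ and $\|u+\hat v(t,u)-v-\hat v(s,v)\|\le(1+L_{\hat v})\|u-v\|+O(|t-s|)$, a single application of (A.3) gives the desired bound without any loop.
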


   \begin{proof}
       From \eqref{equa311} and \eqref{equa31b} and using the following composition rule
  $$a_1b_1-a_2b_2=a_1(b_1-b_2)+(a_1-a_2)b_2,$$ 
 \newpage
  we have
      \begin{align}\label{eq532}
     	&\|\hat{v}(t,u(t))-\hat{v}^N(t,	\bar{u}^N(t))\|^p=\nonumber\\
        &\left\|-(A(t)+R(t)B(t))^{-1}R(t)B(t)u(t)-(A(t)+R(t)B(t))^{-1}f_1(t,u(t))\right.\nonumber\\
        &\left.+(A_{\lfloor t\rfloor}+R_{\lfloor t\rfloor}B_{\lfloor t\rfloor})^{-1}R_{\lfloor t\rfloor}B_{\lfloor t\rfloor}\bar{u}^N_t+\cfrac{(A_{\lfloor t\rfloor}+R_{\lfloor t\rfloor}B_{\lfloor t\rfloor})^{-1}f_1({\lfloor t\rfloor},\bar{u}^N_{\lfloor t\rfloor})}{1+h\|f(\lfloor t\rfloor,\bar{u}^N_{\lfloor t\rfloor}+\bar{v}^N(\lfloor t\rfloor,\bar{u}^N_{\lfloor t\rfloor}))\|}\right\|^p\nonumber\\
        &\leq 2^p\left\|(A(t)+R(t)B(t))^{-1}R(t)B(t)u(t)-(A_{\lfloor t\rfloor}+R_{\lfloor t\rfloor}B_{\lfloor t\rfloor})^{-1}R_{\lfloor t\rfloor}B_{\lfloor t\rfloor}\bar{u}^N(t)\right\|^p\nonumber\\
          &+2^p\left\|(A(t)+R(t)B(t))^{-1}f_1(t,u(t))- (A_{\lfloor t\rfloor}+R_{\lfloor t\rfloor}B_{\lfloor t\rfloor})^{-1}f_1({\lfloor t\rfloor},\bar{u}^N_{\lfloor t\rfloor})\right\|^p\nonumber\\
        &\leq 2^{2p}(I+II+III+IV),
         \end{align}
where
\begin{align*}
    I:=& \left|(A(t)+R(t)B(t))^{-1}\right|^p_1\left\|R(t)B(t)u(t)- R_{\lfloor t\rfloor}B_{\lfloor t\rfloor}\bar{u}^N(t)\right\|^p\\
    II:=&\left|(A(t)+R(t)B(t))^{-1}-(A_{\lfloor t\rfloor}+R_{\lfloor t\rfloor}B_{\lfloor t\rfloor})^{-1}\right|^p_1\left\|R_{\lfloor t\rfloor}B_{\lfloor t\rfloor}\bar{u}^N_(t) \right\|^p\\
    III:=&\left|(A(t)+R(t)B(t))^{-1}\right|^p_1\left\|f_1(t,u(t))-f_1({\lfloor t\rfloor},\bar{u}^N_{\lfloor t\rfloor}) \right\|^p\\
    IV:=&\left|(A(t)+R(t)B(t))^{-1}-(A_{\lfloor t\rfloor}+R_{\lfloor t\rfloor}B_{\lfloor t\rfloor})^{-1}\right|^p_1 \left\|f_1({\lfloor t\rfloor},\bar{u}^N_{\lfloor t\rfloor}) \right\|^p.
\end{align*}
Let us estimate each term. For the first term, we have the following
\begin{align*}
    I:=& \left|(A(t)+R(t)B(t))^{-1}\right|^p_1\left\|R(t)B(t)u(t)- R_{\lfloor t\rfloor}B_{\lfloor t\rfloor}\bar{u}^N(t)\right\|^p\\
    \leq & K^p\left|R(t)B(t)\right|^p_1\left\|u(t)-\bar{u}^N(t)\right\|^p+K^p\left|R(t)B(t)- R_{\lfloor t\rfloor}B_{\lfloor t\rfloor}\right|^p_1\left\|\bar{u}^N(t)\right\|^p\\
    \leq & K^{2p}\left\|u(t)-\bar{u}^N(t)\right\|^p+K^{2p}h^p\left\|\bar{u}^N(t)\right\|^p
\end{align*}
We also have
\begin{align*}
     II:=&\left\|(A(t)+R(t)B(t))^{-1}-(A_{\lfloor t\rfloor}+R_{\lfloor t\rfloor}B_{\lfloor t\rfloor})^{-1}\right\|^p\left\|R_{\lfloor t\rfloor}B_{\lfloor t\rfloor}\bar{u}^N(t) \right\|^p\\
     \leq & K^{2p}h^p\left\|\bar{u}^N(t)\right\|^p
\end{align*}
For the estimation of the third term, we have
\begin{align*}
  III:=&\left|(A(t)+R(t)B(t))^{-1}\right|^p_1\left\|f_1(t,u(t))-f_1({\lfloor t\rfloor},\bar{u}^N_{\lfloor t\rfloor}) \right\|^p\\
  \leq& K^pC(1+\left\|u(t)\right\|^{pc}+\left\|\bar{u}^N(t)\right\|^{pc})(\left\|u(t)-\bar{u}^N(t)\right\|^p+\left\|\bar{u}^N(t)-\bar{u}^N_{\lfloor t\rfloor})\right\|^p)
\end{align*}
Finally for the fourth term, we have
\begin{align*}
   IV:=&\left|(A(t)+R(t)B(t))^{-1}-(A_{\lfloor t\rfloor}+R_{\lfloor t\rfloor}B_{\lfloor t\rfloor})^{-1}\right|^p_1 \left\|f_1({\lfloor t\rfloor},\bar{u}^N_{\lfloor t\rfloor}) \right\|^p \\
   \leq & K^ph^p\left\|f_1({\lfloor t\rfloor},\bar{u}^N_{\lfloor t\rfloor}) \right\|^p 
\end{align*}
By inserting the previous estimates in \eqref{eq532}, we have the following result
  \begin{align*}
     	\|\hat{v}(t,u(t))-\hat{v}^N(t,	\bar{u}^N(t))\|^p&\leq  K^{2p}\left(\left\|u(t)-\bar{u}^N(t)\right\|^p+h^p\left\|\bar{u}^N(t)\right\|^p+h^p\left\|f_1({\lfloor t\rfloor},\bar{u}^N_{\lfloor t\rfloor}) \right\|^p\right)\nonumber\\
        &+K^pC(1+\left\|u(t)\right\|^{pc}+\left\|\bar{u}^N(t)\right\|^{pc})(\left\|u(t)-\bar{u}^N(t)\right\|^p+\left\|\bar{u}^N(t)-\bar{u}^N_{\lfloor t\rfloor})\right\|^p).
        \end{align*}

        Using Lemma \ref{lem10} and  Lemma \ref{theo3} yields
        \begin{align*}
			\left(\mathbb{E}\left[\sup_{t\in [0,T]}\|\hat{v}(t,u(t))-\hat{v}^N(t,	\bar{u}^N(t))\|^p\right]\right)^{\frac{1}{p}}&\leq Ch+C\left(\mathbb{E}\left[\sup_{t\in [0,T]}\|u_t-\bar{u}^N_t\|^p\right]\right)^{\frac{1}{p}}
            \end{align*}
\end{proof}

We have enough material to prove our main result Theorem \ref{theo4}.
\subsection{Proof of  the strong convergence in Theorem \ref{theo4}}
\begin{proof}

	Indeed, we have
   
	\begin{align}\label{equa34}
			\left(\mathbb{E}\left[\sup_{t\in [0,T]}\|X_t-\bar{X}_t^N\|^p\right]\right)^{\frac{1}{p}}&=\left(\mathbb{E}\left[\sup_{t\in [0,T]}\|u_t+\hat{v}_t(u_t)-\bar{u}^N_t-\hat{v}^N_t(\bar{u}^N_t)\|^p\right]\right)^{\frac{1}{p}}\nonumber\\
            \leq & 2\left(\mathbb{E}\left[\sup_{t\in [0,T]}\|u_t-\bar{u}^N_t\|^p+\sup_{t\in [0,T]}\|\hat{v}_t(u_t)-\hat{v}^N_t(\bar{u}^N_t)\|^p\right]\right)^{\frac{1}{p}}\nonumber\\
            \leq &2\left(\mathbb{E}\sup_{t\in [0,T]}\|u_t-\bar{u}_t^N\|^p\right)^{\frac{1}{p}}+2\left(\mathbb{E}\sup_{t\in [0,T]}\|\hat{v}_t(u_t)-\hat{v}^N_t(\bar{u}^N_t)\|^p\right)^{\frac{1}{p}}\nonumber\\
			&\leq Ch+C\left(\mathbb{E}\left[\sup_{t\in [0,T]}\|u_t-\bar{u}^N_t\|^p\right]\right)^{\frac{1}{p}}.
	\end{align}
	Let us prove that $$\left(\mathbb{E}\left[\sup_{t\in [0,T]}\|u_t-\bar{u}^N_t\|^p\right]\right)^{\frac{1}{p}}\leq C_1h^{\frac{1}{2}}. ~~~~~~~~~~~~~~~~~~~~~~~~~~~~~~~~~~~~~~~~~~~~~~$$

	Indeed from \eqref{equa311} and \eqref{equa31b}, we have
		\begin{align*}
		u_t-\bar{u}^N_t &= \int_{0}^{t}P'(s)\left[ u(s)+\hat{v}(s,u(s))\right]-P'_{\lfloor s\rfloor}\left[ \bar{u}^N( s)+\hat{v}^N(\bar{u}^N( s))\right]ds\\
	&	+\int_{0}^{t} A^-(s) B(s)\left[u(s)+\hat{v}(s,u(s))\right]-A^-_{\lfloor s\rfloor} B_{\lfloor s\rfloor}\left[\bar{u}^N( s)+\hat{v}^N(\bar{u}^N( s))\right]ds\\
		&+\int_{0}^{t}A^-(s)f(s,u(s)+\hat{v}(s,u(s))-\frac{A^-_{\lfloor s\rfloor}f({\lfloor s\rfloor},u^N_{\lfloor s\rfloor}+\hat{v}^N({\lfloor s\rfloor},u^N_{\lfloor s\rfloor}))}{1+h\|f( {\lfloor s\rfloor},u^N_{\lfloor s\rfloor}+\hat{v}^N({\lfloor s\rfloor},u^N_{\lfloor s\rfloor}))\|}ds\\
		&+\int_{0}^{t}A^-(s)g(s,u( s)+\hat{v}(s,u(s))-A^-_{\lfloor s\rfloor}g({\lfloor s\rfloor},u^N_{\lfloor s\rfloor}+\hat{v}^N({\lfloor s\rfloor},u^N_{\lfloor s\rfloor}))dW(s).~~~~~~~~
	\end{align*}
    \newpage
	Let us apply the It\^o formula to the function $\|	u_t-\bar{u}^N_t\|^2$ 
	\begin{align}\label{equa45}
	\|	u_t-\bar{u}^N_t\|^2&=2\int_{0}^{t}\langle u_s-\bar{u}^N_s,P'(s)\left[ u(s)+\hat{v}(s,u(s))\right]-P'_{\lfloor s\rfloor}\left[ \bar{u}^N( s)+\hat{v}^N(\bar{u}^N( s))\right] \rangle ds\nonumber\\
	&+2\int_{0}^{t}\langle u_s-\bar{u}^N_s,   A^-(s) B(s)\left[u(s)+\hat{v}(s,u(s))\right]-A^-_{\lfloor s\rfloor} B_{\lfloor s\rfloor}\left[\bar{u}^N( s)+\hat{v}^N(\bar{u}^N( s))\right]\rangle ds\nonumber\\
	&+2\int_{0}^{t}\langle u_s-\bar{u}_s, A^-(s)f(s,u(s)+\hat{v}(u(s)))-\frac{A^-_{\lfloor s\rfloor}f({\lfloor s\rfloor},\bar{u}^N_{\lfloor s\rfloor}+\hat{v}^N(\bar{u}^N_{\lfloor s\rfloor}))}{1+h\|f( \bar{u}^N_{\lfloor s\rfloor}+\hat{v}^N({\lfloor s\rfloor},\bar{u}^N_{\lfloor s\rfloor}))\|} \rangle ds\nonumber\\
		&+2\int_{0}^{t}\langle u_s-\bar{u}_s, A^-(s)g(s,u( s)+\hat{v}(u(s)))-A^-_{\lfloor s\rfloor}g({\lfloor s\rfloor},\bar{u}^N_{\lfloor s\rfloor}+\hat{v}^N(\bar{u}^N_{\lfloor s\rfloor}))\rangle dW(s)\nonumber\\
		&+\int_{0}^{t} |A^-(s)g(s,u( s)+\hat{v}(u(s)))-A^-_{\lfloor s\rfloor}g({\lfloor s\rfloor},\bar{u}^N_{\lfloor s\rfloor}+\hat{v}^N(\bar{u}^N_{\lfloor s\rfloor}))|^2_Fds\nonumber\\
		&=B_1+B_2+B_3+B_4+B_5,
	\end{align}
    where
    \begin{align*}
      B_1:=&2\int_{0}^{t}\langle u_s-\bar{u}^N_s,P'(s)\left[ u(s)+\hat{v}(s,u(s))\right]-P'_{\lfloor s\rfloor}\left[ \bar{u}^N( s)+\hat{v}^N(\bar{u}^N( s))\right]ds \rangle\\
      B_2:=&2\int_{0}^{t}\langle u_s-\bar{u}^N_s,   A^-(s) B(s)\left[u(s)+\hat{v}(s,u(s))\right]-A^-_{\lfloor s\rfloor} B_{\lfloor s\rfloor}\left[\bar{u}^N( s)+\hat{v}^N(\bar{u}^N( s))\right]\rangle ds\\
      B_3:=&2\int_{0}^{t}\langle u_s-\bar{u}_s, A^-(s)f(s,u(s)+\hat{v}(u(s))-\frac{A^-_{\lfloor s\rfloor}f({\lfloor s\rfloor},\bar{u}^N_{\lfloor s\rfloor}+\hat{v}^N(\bar{u}^N_{\lfloor s\rfloor}))}{1+h\|f( \bar{u}^N_{\lfloor s\rfloor}+\hat{v}^N({\lfloor s\rfloor},\bar{u}^N_{\lfloor s\rfloor}))\|} \rangle ds\\
      B_4:=&2\int_{0}^{t}\langle u_s-\bar{u}_s, A^-(s)g(s,u( s)+\hat{v}(u(s)))-A^-_{\lfloor s\rfloor}g({\lfloor s\rfloor},\bar{u}^N_{\lfloor s\rfloor}+\hat{v}^N(\bar{u}^N_{\lfloor s\rfloor}))\rangle dW(s)\\
      B_5:=&\int_{0}^{t} |A^-(s)g(s,u( s)+\hat{v}(u(s)))-A^-_{\lfloor s\rfloor}g({\lfloor s\rfloor},\bar{u}^N_{\lfloor s\rfloor}+\hat{v}^N(\bar{u}^N_{\lfloor s\rfloor}))|^2_Fds
    \end{align*}
Let us estimate $B_1, B_2, B_3, B_4$ and $B_5$. Indeed, using the fact that $ 2\langle a,b\rangle\leq \|a\|^2 +\|b\|^2$, we have
\begin{align}\label{equa46}
	B_1=&2\int_{0}^{t}\langle u_s-\bar{u}^N_s,P'(s)\left[ u(s)+\hat{v}(s,u(s))\right]-P'_{\lfloor s\rfloor}\left[ \bar{u}^N( s)+\hat{v}^N(\bar{u}^N(s))\right] \rangle ds\nonumber\\
	\leq& \int_{0}^{t}\|u_s-\bar{u}^N_s\|^2+\|P'(s)\left[u(s)+\hat{v}(s,u(s))\right]- P'_{\lfloor s\rfloor}\left[\bar{u}^N(s)+\hat{v}^N(\bar{u}^N( s))\right]\|^2ds\nonumber\\
    \leq &\int_{0}^{t}\|u_s-\bar{u}^N_s\|^2+\|P'(s)\left[u(s)+\hat{v}(s,u(s))- \bar{u}^N( s)-\hat{v}^N(\bar{u}^N(s))\right]\nonumber\\
    &+(P'(s)-P'_{\lfloor s\rfloor})[\bar{u}^N(s)+\hat{v}^N(\bar{u}^N(s))]\|^2ds\nonumber\\
    \leq& \int_{0}^{t}\|u_s-\bar{u}^N_s\|^2+2K^2\|u(s)+\hat{v}(s,u(s))- \bar{u}^N( s)-\hat{v}^N(\bar{u}^N(s))\|^2\nonumber\\
    &+2K^2|s-\lfloor s\rfloor|^2\|\bar{u}^N(s)+\hat{v}^N(\bar{u}^N(s))\|^2ds\nonumber\\
	\leq &\int_{0}^{t}\|u_s-\bar{u}^N_s\|^2+4K^2\left[ \|u(s)-\bar{u}^N_s\|^2+\|\hat{v}(s,u(s))-\hat{v}^N(\bar{u}^N(s))\|^2\right]\nonumber~~~~~~~~~~~~~~~~~~~\\
    &+2K^2h^2\|\bar{u}^N_s+\hat{v}^N(\bar{u}^N_s)\|^2ds\nonumber\\
	\leq &(1+4K^2)\int_0^t\|u(s)-\bar{u}^N_s\|^2+\|\hat{v}(s,u(s))-\hat{v}^N(\bar{u}^N(s))\|^2ds+2K^2h^2\int_0^t\|\bar{X}^N_{ s}\|^2ds.
\end{align}
Similarly, we obtain 
\begin{align}\label{equa47}
	B_2=&2\int_{0}^{t}\langle u_s-\bar{u}^N_s,   A^-(s) B(s)\left[u(s)+\hat{v}(s,u(s))\right]-A^-_{\lfloor s\rfloor} B_{\lfloor s\rfloor}\left[\bar{u}^N_s+\hat{v}^N(\bar{u}^N_s)\right]\rangle ds\nonumber\\
		\leq &(1+4K^4)\int_0^t\|u(s)-\bar{u}^N_s\|^2+\|\hat{v}(s,u(s))-\hat{v}^N(\bar{u}^N(s))\|^2ds \nonumber\\
        &+2K^4h^2\int_0^t\|\bar{X}^N(s)\|^2ds.
	\end{align}

	\begin{align*}
		B_5=&\int_{0}^{t} |A^-(s)g(s,u( s)+\hat{v}(u(s)))-A^-_{\lfloor s\rfloor}g({\lfloor s\rfloor},\bar{u}^N_{\lfloor s\rfloor}+\hat{v}^N(\bar{u}^N_{\lfloor s\rfloor}))|_F^2ds~~~~~~~~~~~~~~~~~\nonumber\\
	\leq& 2K^2 \int_{0}^{t}|g(s,u( s)+\hat{v}(u(s)))-g({\lfloor s\rfloor},\bar{u}^N_{\lfloor s\rfloor}+\hat{v}^N(\bar{u}^N_{\lfloor s\rfloor}))|_F^2ds\nonumber\\
    &+2 \int_{0}^{t}|A^-(s)-A^-_{\lfloor s\rfloor}|_F^2|g({\lfloor s\rfloor},\bar{u}^N_{\lfloor s\rfloor}+\hat{v}^N(\bar{u}^N_{\lfloor s\rfloor}))|_F^2ds
    \end{align*}
    Using Assumptions \ref{asum1} and \ref{asum2},  we have
    \begin{align}\label{equa48}
		B_5\leq& 2K^2 L^2\int_{0}^{t}\|u( s)+\hat{v}(u(s))-\bar{u}^N_{\lfloor s\rfloor}-\hat{v}^N(\bar{u}^N_{\lfloor s\rfloor})\|^2ds\nonumber\\
        &+h^2K^2\int_0^t|g({\lfloor s\rfloor},\bar{u}^N_{\lfloor s\rfloor}+\hat{v}^N(\bar{u}^N_{\lfloor s\rfloor}))|_F^2ds\nonumber\\
			\leq &4K^2 L^2\int_{0}^{t}\|u( s)-\bar{u}^N_s+\bar{u}^N_s-\bar{u}^N_{\lfloor s\rfloor})\|^2ds\nonumber\\
            &+ 4K^2 L^2\int_{0}^{t}\|\hat{v}(u(s))-\hat{v}^N(\bar{u}^N(s))+\hat{v}^N(\bar{u}^N(s))-\hat{v}^N(\bar{u}^N_{\lfloor s\rfloor}))\|^2ds\nonumber\\
            &+h^2K^2\int_0^t|g({\lfloor s\rfloor},\bar{u}^N_{\lfloor s\rfloor}+\hat{v}^N(\bar{u}^N_{\lfloor s\rfloor}))|_F^2ds\nonumber\\
		\leq &8K^2 L^2\left[\int_{0}^{t}\|u( s)-\bar{u}^N_s\|^2ds+(1+L_{\hat{v}})\int_{0}^{t}\|\bar{u}^N_s-\bar{u}^N_{\lfloor s\rfloor}\|^2ds\right]\nonumber\\
        &+ 8K^2 L^2\int_0^t\|\hat{v}(s,u(s))-\hat{v}^N(\bar{u}^N(s))\|^2ds\\
        &+h^2K^2\int_0^t|g({\lfloor s\rfloor},\bar{u}^N_{\lfloor s\rfloor}+\hat{v}^N(\bar{u}^N_{\lfloor s\rfloor}))|_F^2ds.
	\end{align}
	Using the fact that $\langle a,b+c\rangle=\langle a,b\rangle+\langle a,c\rangle$ yields
	\begin{align}\label{equa49}
		B_3&=2\int_{0}^{t}\langle u_s-\bar{u}^N_s, A^-(s)f(s,u(s)+\hat{v}(u(s))-\frac{A^-_{\lfloor s\rfloor}f({\lfloor s\rfloor},\bar{u}^N_{\lfloor s\rfloor}+\hat{v}^N(\bar{u}^N_{\lfloor s\rfloor}))}{1+h\|f(\bar{ u}^N_{\lfloor s\rfloor}+\hat{v}^N({\lfloor s\rfloor},\bar{u}^N_{\lfloor s\rfloor}))\|} \rangle ds\nonumber\\
		&=2\int_{0}^{t}\langle u_s-\bar{u}^N_s, A^-(s)f(s,u(s)+\hat{v}(u(s)))-A^-(s)f( s,\bar{u}^N_s+\hat{v}^N(\bar{u}^N_s)) \rangle ds\nonumber\\
		&+2\int_{0}^{t}\langle u_s-\bar{u}^N_s,A^-(s)f(s,\bar{u}^N_s+\hat{v}^N(\bar{u}^N_s))-\frac{A^-_{\lfloor s\rfloor}f({\lfloor s\rfloor},\bar{u}^N_{\lfloor s\rfloor}+\hat{v}^N(\bar{u}_{\lfloor s\rfloor}))}{1+h\|f( \bar{u}^N_{\lfloor s\rfloor}+\hat{v}^N({\lfloor s\rfloor},\bar{u}^N_{\lfloor s\rfloor}))\|} \rangle ds\nonumber\\
		&=B_{31}+B_{32}.
	\end{align}
	Using the one-sided Lipschitz condition in Assumption \ref{asum1} (A.2) and the fact that $u(t)=P(t)X(t)$ and $X(t)=u(t)+\hat{v}(t,u(t))$, we have
	\begin{align}\label{equa49a}
		B_{31}&:=2\int_{0}^{t}\langle u_s-\bar{u}^N_s, A^-(s)f(s,u(s)+\hat{v}(u(s)))-A^-(s)f( s,\bar{u}^N_s+\hat{v}^N(\bar{u}^N_s)) \rangle ds\nonumber\\
        &\leq 2\int_{0}^{t}|\langle P(s)X(s)-P(s)\bar{X}^N_s, A^-(s)f(s,u(s)+\hat{v}(u(s)))-A^-(s)f( s,\bar{u}^N_s+\hat{v}^N(\bar{u}^N_s)) \rangle| ds\nonumber\\
        & \leq 2K^2\int_{0}^{t}|\langle X(s)-\bar{X}^N_s, f(s,X(s))-f( s,\bar{X}^N_s) \rangle| ds\nonumber\\
        &\leq 2K^2C\int_{0}^{t}\|X(s)-\bar{X}^N_s\|^2ds=2K^2C\int_{0}^{t}\|u(s)+\hat{v}(u(s))-\bar{u}^N_s-\hat{v}^N(\bar{u}^N_s)\|^2ds\nonumber\\
        &\leq 4K^2C\int_{0}^{t}\|u(s)-\bar{u}^N_s\|^2+\|\hat{v}(s,u(s))-\hat{v}^N(s,\bar{u}^N(s))\|^2ds
	\end{align}
	For $B_{32}$ using the inequality $2\langle a, b\rangle\leq \|a\|^2+\|b\|^2$ yields
	\begin{align*}
	B_{32}	&:=2\int_{0}^{t}\langle u_s-\bar{u}^N_s,A^-(s)f(s,\bar{u}^N_s+\hat{v}^N(\bar{u}^N_s))-\frac{A^-_{\lfloor s\rfloor}f({\lfloor s\rfloor},\bar{u}^N_{\lfloor s\rfloor}+\hat{v}^N(\bar{u}^N_{\lfloor s\rfloor}))}{1+h\|f( \bar{u}^N_{\lfloor s\rfloor}+\hat{v}^N({\lfloor s\rfloor},\bar{u}^N_{\lfloor s\rfloor}))\|} \rangle ds\nonumber\\
	&\leq 2\int_{0}^{t}\langle u_s-\bar{u}^N_s,A^-(s)f(s,\bar{u}^N_s+\hat{v}^N(\bar{u}^N_s))-A^-_{\lfloor s\rfloor}f({\lfloor s\rfloor},\bar{u}^N_{\lfloor s\rfloor}+\hat{v}^N(\bar{u}^N_{\lfloor s\rfloor}))\rangle ds\\
    &+2\int_{0}^{t}\langle u_s-\bar{u}^N_s,h\frac{A^-(s)f(s,\bar{u}^N_s+\hat{v}^N(\bar{u}^N_s))\|f({\lfloor s\rfloor},\bar{u}^N_{\lfloor s\rfloor}+\hat{v}^N(\bar{u}^N_{\lfloor s\rfloor}))\|}{1+h\|f( \bar{u}^N_{\lfloor s\rfloor}+\hat{v}^N({\lfloor s\rfloor},\bar{u}^N_{\lfloor s\rfloor}))\|} \rangle ds
    \end{align*}
    Using the fact that $\dfrac{1}{1+h\|f( \bar{u}^N_{\lfloor s\rfloor}+\hat{v}^N({\lfloor s\rfloor},\bar{u}^N_{\lfloor s\rfloor}))\|}\leq 1 $, and Assumption \ref{asum1} yields
    \begin{align}\label{equa49b}
		B_{32}	&\leq \int_{0}^{t} \|u_s-\bar{u}^N_s\|^2+\|A^-(s)f(s,\bar{u}^N_s+\hat{v}^N(\bar{u}^N_s))-A^-_{\lfloor s\rfloor}f({\lfloor s\rfloor},\bar{u}^N_{\lfloor s\rfloor}+\hat{v}^N(\bar{u}^N_{\lfloor s\rfloor}))\|^2 ds\nonumber\\
	&+\int_{0}^{t}\| u_s-\bar{u}^N_s\|^2+h^2\|A^-(s)f(s,\bar{u}^N_s+\hat{v}^N(\bar{u}^N_s))\|^2\|f({\lfloor s\rfloor},\bar{u}^N_{\lfloor s\rfloor}+\hat{v}^N(\bar{u}^N_{\lfloor s\rfloor}))\|^2 ds\nonumber\\
		&\leq 2\int_{0}^{t} \|u_s-\bar{u}^N_s\|^2+K^2\|f(s,\bar{u}^N_s+\hat{v}^N(\bar{u}^N_s))-f({\lfloor s\rfloor},\bar{u}^N_{\lfloor s\rfloor}+\hat{v}^N(\bar{u}^N_{\lfloor s\rfloor}))\|^2 ds\nonumber\\
          &+2h^2K^2\int_{0}^{t}\|f({\lfloor s\rfloor},\bar{u}^N_{\lfloor s\rfloor}+\hat{v}^N(\bar{u}^N_{\lfloor s\rfloor}))\|^2ds\nonumber\\
	&+h^2K^2\int_{0}^{t}\|f({\lfloor s\rfloor},\bar{u}^N_{\lfloor s\rfloor}+\hat{v}^N(\bar{u}^N_{\lfloor s\rfloor}))\|^4 +\|f(s,\bar{u}^N_s+\hat{v}^N(\bar{u}^N_s))\|^4ds.
	\end{align}
	Combining \eqref{equa49a} and \eqref{equa49b} in \eqref{equa49} allows to have 
		\begin{align}\label{equa50}
		B_3&\leq 2(1+2K^2C)\int_{0}^{t}\left\|u_s-\bar{u}_s^N\right\|^2 +\|\hat{v}(s,u(s))-\hat{v}^N(\bar{u}^N(s))\|^2  ds\nonumber\\ &+2K^2\int_{0}^{t}\|f(s,\bar{u}^N_s+\hat{v}^N(\bar{u}^N_s))-f({\lfloor s\rfloor},\bar{u}^N_{\lfloor s\rfloor}+\hat{v}^N(\bar{u}^N_{\lfloor s\rfloor}))\|^2ds~~~~~~~~~~~~~~~~~~~~~~~~~~\nonumber\\
          &+2h^2K^2\int_{0}^{t}\|f({\lfloor s\rfloor},\bar{u}^N_{\lfloor s\rfloor}+\hat{v}^N(\bar{u}^N_{\lfloor s\rfloor}))\|^2ds\nonumber\\
		&+h^2K^2\int_{0}^{t}\|f({\lfloor s\rfloor},\bar{u}^N_{\lfloor s\rfloor}+\hat{v}^N(\bar{u}^N_{\lfloor s\rfloor}))\|^4+\|f( s,\bar{u}^N_s+\hat{v}^N(\bar{u}^N_s))\|^4 ds.
		\end{align}
		By inserting \eqref{equa46}, \eqref{equa47}, \eqref{equa48} and \eqref{equa50} in \eqref{equa45},
		 we obtain 
		\begin{align}\label{equa53}
		\|	u_t-\bar{u}^N_t\|^2&\leq 	C_1\int_{0}^{t}\|u_s-\bar{u}^N_s\|^2ds+C_1\int_{0}^{t}\|\bar{u}^N_s-\bar{u}^N_{\lfloor s\rfloor}\|^2ds\nonumber\\
        &+C_1h^2\int_0^t \|\bar{X}^N(s)\|^2+\|g({\lfloor s\rfloor},\bar{u}^N_{\lfloor s\rfloor}+\hat{v}^N(\bar{u}^N_{\lfloor s\rfloor}))\|^2ds\nonumber\\
	 &+2h^2K^2\int_{0}^{t}\|f({\lfloor s\rfloor},\bar{u}^N_{\lfloor s\rfloor}+\hat{v}^N(\bar{u}^N_{\lfloor s\rfloor}))\|^2ds+C_1\int_0^t\|\hat{v}(s,u(s))-\hat{v}^N(\bar{u}^N(s))\|^2ds\nonumber\\
		&+h^2K^2\int_{0}^{t}\|f({\lfloor s\rfloor},\bar{u}^N_{\lfloor s\rfloor}+\hat{v}^N(\bar{u}^N_{\lfloor s\rfloor}))\|^4+\|f( s,\bar{u}^N_s+\hat{v}^N(\bar{u}^N_s))\|^4 ds\nonumber\\
        &+2K^2\int_{0}^{t}\|f(s,\bar{u}^N_s+\hat{v}^N(\bar{u}^N_s))-f({\lfloor s\rfloor},\bar{u}^N_{\lfloor s\rfloor}+\hat{v}^N(\bar{u}^N_{\lfloor s\rfloor}))\|^2ds~~~~~~~~~~~~~~~~~~~~~~~~~~\nonumber\\
		&+2\int_{0}^{t}\langle u_s-\bar{u}^N_s, A^-(s)g(s,u( s)+\hat{v}(u(s)))-A^-_{\lfloor s\rfloor}g({\lfloor s\rfloor},\bar{u}^N_{\lfloor s\rfloor}+\hat{v}^N(\bar{u}^N_{\lfloor s\rfloor}))\rangle dW(s).
	\end{align}
		Taking the supremum, we obtain
				\begin{align}\label{equa54}
			\sup_{0\leq s \leq t}\|	u_s-\bar{u}^N_s\|^2&\leq 	C_1\int_{0}^{t}\|u_s-\bar{u}^N_{s}\|^2ds+ C_1\int_{0}^{t}\|\bar{u}^N_s-\bar{u}^N_{\lfloor s\rfloor}\|^2ds \nonumber\\
            &+C_1h^2\int_{0}^{t}\|\bar{X}^N_s\|^2+\|g(\lfloor s\rfloor,\bar{u}^N_{\lfloor s\rfloor}+\hat{v}^N(\bar{u}^N_{\lfloor s\rfloor}))\|^2ds\nonumber\\
			&+K^2\int_{0}^{t}\|f(s,\bar{u}^N_s+\hat{v}^N(\bar{u}^N_s))-f({\lfloor s\rfloor},\bar{u}^N_{\lfloor s\rfloor}+\hat{v}^N(\bar{u}^N_{\lfloor s\rfloor}))\|^2ds~~~~~~~~~~~~~~~~~~\nonumber\\
			 &+2h^2K^2\int_{0}^{t}\|f({\lfloor s\rfloor},\bar{u}^N_{\lfloor s\rfloor}+\hat{v}^N(\bar{u}^N_{\lfloor s\rfloor}))\|^2ds+C_1\int_0^t\|\hat{v}(s,u(s))-\hat{v}^N(\bar{u}^N(s))\|^2ds\nonumber\\
		&+h^2K^2\int_{0}^{t}\|f({\lfloor s\rfloor},\bar{u}^N_{\lfloor s\rfloor}+\hat{v}^N(\bar{u}^N_{\lfloor s\rfloor}))\|^4+\|f( s,\bar{u}^N_s+\hat{v}^N(\bar{u}^N_s))\|^4 ds+\sup_{0\leq s\leq t}\left|B_4\right|.
		\end{align}
		Taking the $\|\|_{L^{\frac{p}{2}}(\Omega,\mathbb{R})}$ norm in  both sides for all $p\geq 2 $, and using the H\"older inequality yields
				\begin{align*}
		\left\|	\sup_{0\leq s\leq t}\|	u_s-\bar{u}^N_s\|^2\right\|_{L^{\frac{p}{2}}(\Omega,\mathbb{R})}&\leq 	C_1	(p)(\int_{0}^{t}\left\|\|u_s-\bar{u}^N_s\|^2\right\|_{L^{\frac{p}{2}}(\Omega,\mathbb{R})}+ h^2\left\|\|g(\lfloor s\rfloor,\bar{u}^N_{\lfloor s\rfloor}+\hat{v}^N(\bar{u}^N_{\lfloor s\rfloor}))\|^2\right\|_{L^{\frac{p}{2}}(\Omega,\mathbb{R})}ds)\nonumber\\
       & +C_1	(p)\int_{0}^{t}\left\|\|\hat{v}(s,u(s))-\hat{v}^N(\bar{u}^N(s))\|^2\right\|_{L^{\frac{p}{2}}(\Omega,\mathbb{R})}ds\nonumber\\
		&+ C_1(p)\int_{0}^{t}\left\|\|\bar{u}^N_s-\bar{u}^N_{\lfloor s\rfloor}\|^2\right\|_{L^{\frac{p}{2}}(\Omega,\mathbb{R})}+h^2\left\|\|\bar{X}^N_s\|^2\right\|_{L^{\frac{p}{2}}(\Omega,\mathbb{R})}ds\nonumber\\
			&+K^2C_1(p)\int_{0}^{t}\left\|\|f(s,\bar{u}^N_s+\hat{v}^N(\bar{u}^N_s))-f({\lfloor s\rfloor},\bar{u}^N_{\lfloor s\rfloor}+\hat{v}^N(\bar{u}^N_{\lfloor s\rfloor}))\|^2\right\|_{L^{\frac{p}{2}}(\Omega,\mathbb{R})}ds\nonumber\\
			&+h^2C_1(p)\int_{0}^{t}\left\|\|f(\lfloor s\rfloor,\bar{u}^N_{\lfloor s\rfloor}+\hat{v}^N(\bar{u}^N_{\lfloor s\rfloor}))\|^4 \right\|_{L^{\frac{p}{2}}(\Omega,\mathbb{R})}ds\nonumber\\
            &+h^2C_1(p)\int_{0}^{t}\left\|\|f( s,\bar{u}^N_s+\hat{v}^N(\bar{u}^N_s))\|^4 \right\|_{L^{\frac{p}{2}}(\Omega,\mathbb{R})}ds\nonumber\\
			&+h^2C_1(p)\int_{0}^{t}\left\|\|f(\lfloor s\rfloor,\bar{u}^N_{\lfloor s\rfloor}+\hat{v}^N(\bar{u}^N_{\lfloor s\rfloor}))\|^2 \right\|_{L^{\frac{p}{2}}(\Omega,\mathbb{R})}ds+\left\|\sup_{0\leq s\leq t}\left|B_4\right|\right\|_{L^{\frac{p}{2}}(\Omega,\mathbb{R})}.
		\end{align*}
		Since
		\begin{align*}
				\left\|	\sup_{0\leq s\leq t}\|	u_s-\bar{u}^N_s\|^2\right\|_{L^{\frac{p}{2}}(\Omega,\mathbb{R})}=&\left(\mathbb{E}\sup_{0\leq s\leq t}\left(\|	u_s-\bar{u}^N_s\|\right)^{p}\right)^{\frac{2}{p}}\\
				=&\left(\left[\mathbb{E}\sup_{0\leq s\leq t}\left(\|	u_s-\bar{u}^N_s\|\right)^{p}\right]^{\frac{1}{2}}\left[\mathbb{E}\sup_{0\leq s\leq t}\left(\|	u_s-\bar{u}^N_s\|\right)^{p}\right]^{\frac{1}{2}}\right)^{\frac{2}{p}}\\
					=&\left(\mathbb{E}\sup_{0\leq s\leq t}\left(\|	u_s-\bar{u}^N_s\|\right)^{p}\right)^{\frac{1}{p}}\left(\mathbb{E}\sup_{0\leq s\leq t}\left(\|	u_s-\bar{u}^N_s\|\right)^{p}\right)^{\frac{1}{p}}\\
					=&\left\|\sup_{0\leq s\leq t}\|	u_s-\bar{u}^N_s\|\right\|_{L^{p}(\Omega,\mathbb{R})}\left\|\sup_{0\leq s\leq t}\|	u_s-\bar{u}^N_s\|\right\|_{L^{p}(\Omega,\mathbb{R})}
					=\left\|\sup_{0\leq s\leq t}\|	u_s-\bar{u}^N_s\|\right\|^2_{L^{p}(\Omega,\mathbb{R})},
		\end{align*}
		We therefore have
        	\begin{align}\label{equa56a}
		\left\|\sup_{0\leq s\leq t}\|	u_s-\bar{u}^N_s\|\right\|^2_{L^{p}(\Omega,\mathbb{R})}
			&\leq 	C_1	(p)\int_{0}^{t}\left\|\|u_s-\bar{u}^N_s\|\right\|^2_{L^{p}(\Omega,\mathbb{R})}ds+	C_1(p)\int_{0}^{t}\left\|        \|\hat{v}(s,u(s))-\hat{v}^N(\bar{u}^N(s))\|   \right\|^2_{L^{p}(\Omega,\mathbb{R})}ds\nonumber\\
		&+ C_	1(p)\int_{0}^{t}\left\|\|\bar{u}^N_s-\bar{u}^N_{\lfloor s\rfloor}\|\right\|^2_{L^{p}(\Omega,\mathbb{R})}+h^2\left\|\|\bar{X}^N_s\|\right\|^2_{L^{p}(\Omega,\mathbb{R})}ds\nonumber\\
        &+C_1(p)h^2\int_{0}^{t}\left\|\|g(\lfloor s\rfloor,\bar{u}^N_{\lfloor s\rfloor}+\hat{v}^N(\bar{u}^N_{\lfloor s\rfloor}))\|\right\|^2_{L^{p}(\Omega,\mathbb{R})}ds\nonumber\\
			&+C_1(p)\int_{0}^{t}\left\|\|f(s,\bar{u}^N_s+\hat{v}^N(\bar{u}^N_s))-f({\lfloor s\rfloor},\bar{u}^N_{\lfloor s\rfloor}+\hat{v}^N(\bar{u}^N_{\lfloor s\rfloor}))\|\right\|^2_{L^{p}(\Omega,\mathbb{R})}ds\nonumber\\
			&+h^2C_1(p)\int_{0}^{t}\left\|\|f(\lfloor s\rfloor,\bar{u}^N_{\lfloor s\rfloor}+\hat{v}^N(\bar{u}^N_{\lfloor s\rfloor}))\|\right\|^2_{L^{2p}(\Omega,\mathbb{R})}ds\nonumber\\
              &+C_1(p)h^2\int_{0}^{t}\left\|\|f(\lfloor s\rfloor,\bar{u}^N_{\lfloor s\rfloor}+\hat{v}^N(\bar{u}^N_{\lfloor s\rfloor}))\|\right\|^2_{L^{p}(\Omega,\mathbb{R})}ds
            +\left\|\sup_{0\leq s\leq t}\left|B_4\right|\right\|_{L^{\frac{p}{2}}(\Omega,\mathbb{R})}.
		\end{align}
        Using  Lemma \ref{v}, Lemma \ref{lem10} and Lemma \ref{theo3},  the inequality \eqref{equa56a} becomes
        		
			\begin{align}\label{equa56}
		\left\|\sup_{0\leq s\leq t}\|	u_s-\bar{u}^N_s\|\right\|^2_{L^{p}(\Omega,\mathbb{R})}
			&\leq 	C_1	(p)\int_{0}^{t}\left\|\|u_s-\bar{u}^N_s\|\right\|^2_{L^{p}(\Omega,\mathbb{R})}ds+ C_1h
            +\left\|\sup_{0\leq s\leq t}\left|B_4\right|\right\|_{L^{\frac{p}{2}}(\Omega,\mathbb{R})}.
		\end{align}

    Let us  bound $\left\|\sup_{0\leq s\leq t}\left|B_4\right|\right\|_{L^{\frac{p}{2}}(\Omega,\mathbb{R})}$ by using Lemma \ref{lem11},
		\begin{align*}
			\left\|\sup_{0\leq s\leq t}\left|B_4\right|\right\|_{L^{\frac{p}{2}}(\Omega,\mathbb{R})}
	&:=	\left\|\sup_{0\leq s\leq t}\left|2\int_{0}^{t}\langle u_s-\bar{u}^N_s, A^-(s)g(s,u( s)+\hat{v}(u(s)))\right.\right.\nonumber\\
	&\left.\left.-A^-_{\lfloor s\rfloor}g({\lfloor s\rfloor},\bar{u}^N_{\lfloor s\rfloor}+\hat{v}^N(\bar{u}^N_{\lfloor s\rfloor}))\rangle dW(s)\right|\right\|_{L^{\frac{p}{2}}(\Omega,\mathbb{R})}\nonumber\\
	&\leq C_p\left(\int_{0}^{t}\left\|2\langle u_s-\bar{u}^N_s, A^-(s)g(s,u( s)+\hat{v}(u(s)))\right.\right.\nonumber~~~~~~~~~~~~~~~~~~~~~~~~\\
	&\left.\left.-A^-_{\lfloor s\rfloor}g({\lfloor s\rfloor},\bar{u}^N_{\lfloor s\rfloor}+\hat{v}^N(\bar{u}^N_{\lfloor s\rfloor}))\rangle \right\|^2_{L^{\frac{p}{2}}(\Omega,\mathbb{R}^{d})}ds\right)^{\frac{1}{2}}.
		\end{align*}
	Using the fact that  $\|ab\|\leq\|a\|\|b\| $ yields
			\begin{align*}
			\left\|\sup_{0\leq s\leq t}\left|B_4\right|\right\|_{L^{\frac{p}{2}}(\Omega,\mathbb{R})}
			&\leq 2C_p\left(\int_{0}^{t}\left\|\|u_s-\bar{u}^N_s\|\right\|^2_{L^{p}(\Omega,\mathbb{R})}\times\right.\\
		&	\left.\left\|A^-(s)g(s,u( s)+\hat{v}(u(s)))-A^-_{\lfloor s\rfloor}g({\lfloor s\rfloor},\bar{u}^N_{\lfloor s\rfloor}+\hat{v}^N(\bar{u}^N_{\lfloor s\rfloor}))\right\|^2_{L^{\frac{p}{2}}(\Omega,\mathbb{R}^{d\times d_1})}ds
		\right)^{\frac{1}{2}}\\
		&\leq2C_p\left\| \sup_{0\leq s\leq t}\|u_s-\bar{u}^N_s\|\right\|_{L^{p}(\Omega,\mathbb{R})}\times\\
		&\left(\int_{0}^{t} K\left \|g(s,u( s)+\hat{v}(u(s)))-g({\lfloor s\rfloor},\bar{u}^N_{\lfloor s\rfloor}+\hat{v}^N(\bar{u}^N_{\lfloor s\rfloor}))\right\|^2_{L^{p}(\Omega,\mathbb{R}^{d\times d_1})}\right.\\
        &\left.+h^2\left\|g({\lfloor s\rfloor},\bar{u}^N_{\lfloor s\rfloor}+\hat{v}^N(\bar{u}^N_{\lfloor s\rfloor}))\right\|^2_{L^{p}(\Omega,\mathbb{R}^{d\times d_1})}
		ds\right)^{\frac{1}{2}}\\
			&\leq C_1 \left\|\sup_{0\leq s\leq t}\|u_s-\bar{u}^N_s\|\right\|_{L^{p}(\Omega,\mathbb{R})}\left(\int_{0}^{t} \left\|u_ s-\bar{u}^N_{\lfloor s\rfloor}\right\|^2_{L^{p}(\Omega,\mathbb{R}^{d})}\right.\\
             &\left.+\left\|\hat{v}(u_ s)-\hat{v}^N(\bar{u}^N_{\lfloor s\rfloor})\right\|^2_{L^{p}(\Omega,\mathbb{R}^{d})}+h^2\left\|g({\lfloor s\rfloor},\bar{u}^N_{\lfloor s\rfloor}+\hat{v}^N(\bar{u}^N_{\lfloor s\rfloor}))\right\|^2_{L^{p}(\Omega,\mathbb{R}^{d\times d_1})}
		ds\right)^{\frac{1}{2}}.
		\end{align*}
		Using the fact that $2ab\leq a^2+b^2$ yields
		\begin{align}\label{equa57}
			\left\|\sup_{0\leq s\leq t}\left|B_4\right|\right\|_{L^{\frac{p}{2}}(\Omega,\mathbb{R})}
				&\leq \frac{1}{2}\left\|\sup_{0\leq s\leq t}\|u_s-\bar{u}^N_s\|\right\|_{L^{p}(\Omega,\mathbb{R})}^2+2C_1\int_{0}^{t} \left\|u_ s-\bar{u}^N_{\lfloor s\rfloor}\right\|^2_{L^{p}(\Omega,\mathbb{R}^{d})}\nonumber\\
                &+h^2\left\|g({\lfloor s\rfloor},\bar{u}^N_{\lfloor s\rfloor}+\hat{v}^N(\bar{u}^N_{\lfloor s\rfloor}))\right\|^2_{L^{p}(\Omega,\mathbb{R}^{d\times d_1})}+\left\|\hat{v}(u_ s)-\hat{v}^N(\bar{u}^N_{\lfloor s\rfloor})\right\|^2_{L^{p}(\Omega,\mathbb{R}^{d})}ds\nonumber\\
                &\leq \frac{1}{2}\left\|\sup_{0\leq s\leq t}\|u_s-\bar{u}^N_s\|\right\|_{L^{p}(\Omega,\mathbb{R})}^2+2C_1\int_{0}^{t} \left\|u_ s-\bar{u}^N_s\right\|^2_{L^{p}(\Omega,\mathbb{R}^{d})}\nonumber\\
                 &+h^2\left\|g({\lfloor s\rfloor},\bar{u}^N_{\lfloor s\rfloor}+\hat{v}^N(\bar{u}^N_{\lfloor s\rfloor}))\right\|^2_{L^{p}(\Omega,\mathbb{R}^{d\times d_1})}+\left\|\hat{v}(u_ s)-\hat{v}^N(\bar{u}^N_s)\right\|^2_{L^{p}(\Omega,\mathbb{R}^{d})}\nonumber\\
                 &+ (1+L_{\hat{v}})\left\|\bar{u}^N_s-\bar{u}^N_{\lfloor s\rfloor}\right\|^2_{L^{p}(\Omega,\mathbb{R}^{d})}ds\nonumber\\
                  &\leq \frac{1}{2}\left\|\sup_{0\leq s\leq t}\|u_s-\bar{u}^N_s\|\right\|_{L^{p}(\Omega,\mathbb{R})}^2+2C_1\int_{0}^{t} \left\|u_ s-\bar{u}^N_s\right\|^2_{L^{p}(\Omega,\mathbb{R}^{d})}ds+C_1h
			\end{align}
			By replacing \eqref{equa57} in \eqref{equa56}, we obtain
           \begin{align*}
		\left\|\sup_{0\leq s\leq t}\|	u_s-\bar{u}^N_s\|\right\|^2_{L^{p}(\Omega,\mathbb{R})}
			&\leq 	C_1	(p)\int_{0}^{t}\left\|\sup_{0\leq s\leq t}\|u_s-\bar{u}^N_s\|\right\|^2_{L^{p}(\Omega,\mathbb{R})}ds+C_1h
		\end{align*}
			Using Gronwall's lemma, we obtain
						
	 \begin{align*}
		\left\|\sup_{0\leq s\leq t}\|	u_s-\bar{u}^N_s\|\right\|^2_{L^{p}(\Omega,\mathbb{R})}
			&\leq 	C_1(p)h\exp{(C(p)T)}.
            \end{align*}
	
	This means that
	\begin{align}\label{equa59}
		\left(\mathbb{E}\left[\sup_{0\leq t\leq T}\|u_t-\bar{u}^N_t\|^p\right]\right)^{\frac{1}{p}}
		\leq C_1(p)h^{\frac{1}{2}}.~~~~~~~~~~~~~~~~
	\end{align}
	Finally,  we replace \eqref{equa59} in \eqref{equa34} and obtain
	\begin{align*}
		\left(\mathbb{E}\left[\sup_{0\leq t\leq T}\|X_t-\bar{X}_t^N\|^p\right]\right)^{\frac{1}{p}}
		&\leq C_1h^{\frac{1}{2}}.
	\end{align*}
Note that the constant $C_1$ changes from line to line.

\end{proof}
\section{ Numerical simulations}
This example aims to check whether the theoretical strong convergence order of scheme \eqref{equa31a}, as stated in Theorem \ref{theo4} is consistent with  simulations. To achieve that aims, we consider the following SDAE

	\begin{equation}\label{solu1}
		\begin{pmatrix}
			1 & 0 &1  \\
			0 & 0& 0\\
			1&0 &0
		\end{pmatrix}dX=[BX(t)+f(t,X(t))]dt+g(t,X(t))dW(t),~~ X(0)=(10^{-2};0;10^{-2}), ~~t\in\left[0,T \right],
	\end{equation}
	with
    $B=\begin{pmatrix}
			0 & 0 &0  \\
			0 & -1& 0\\
			0&0 &1
		\end{pmatrix} ,$
	$f(x_1,x_2,x_3) =\begin{pmatrix}
		x_1  \\
		x_2-x_2^5\\
		x_3-x_3^3
	\end{pmatrix} ,$ and $$ g(x_1,x_2,x_3)=\begin{pmatrix}
	x_1-x_3 & x_2&x_3  \\
		0 & 0& 0\\
		x_2-x_3&	x_1&x_2-x_1
	\end{pmatrix}.$$
	
Let us check that the assumptions used in Theorem \ref{theo1} are satisfied. This will ensure that equation \eqref{solu1} admits a unique solution. We can easily  check  that the pseudo-inverse matrix $A^-$ is computed as
$$A^-=	\begin{pmatrix}
0 & 0 &1  \\
	0 & 0& 0\\
	1&0&-1
\end{pmatrix}.$$
The projector matrix $R$ is  therefore given by $$R=I_{3\times 3}-AA^-=	\begin{pmatrix}
	0 & 0 &0  \\
	0 & 1& 0\\
	0&0&0
\end{pmatrix}.$$
The $X(t)$ can be written as $X(t)=U(t)+V(t)$ and  the constraint equation (AE) is given by
\begin{align*}
	A(t)V(t)+R(t)&[B(U(t)+V(t))+f(U(t)+V(t))]=	
	&\begin{pmatrix}
		v_1(t)+v_3(t)   \\
		-(v_2(t)+u_2(t))^5\\
		v_1(t)
	\end{pmatrix}
	&=\begin{pmatrix}
	0   \\
		0\\
	0
	\end{pmatrix}.\\
\end{align*}
Therefore 
$$v_1(t)=0~~v_3(t)=0 \text{ and } v_2(t)=-u_2(t) ,~\forall~t\in [0,T].$$
 In addition, the noise sources do not appear into the constraints AE  which is globally solvable. This implies that equation \eqref{solu1} is an index-1 SDAE.

 Let is  prove that the function $f(\cdot, \cdot)$  satisfies the Assumption \ref{asum1}. Indeed  we have 
\begin{align*}
&\left\langle (X-Y)^T,f(t,X)-f(t,Y)\right\rangle=\begin{pmatrix}
	x_1-y_1  \\
	  x_2-y_2\\
   x_3-y_3
	\end{pmatrix}\cdot\begin{pmatrix}
	x_1-y_1  \\
		x_2-x_2^5-y_2+y_2^5\\
		x_3-x_3^3-y_3+y_3^3
	\end{pmatrix}~~~~~~~~~~~~~~~~~~~~~~~~~~~~~~\\
	&=(x_1-y_1)^2+(x_2-y_2)^2-(x_2-y_2)(x_2^5-y_2^5)+( x_3-y_3)^2-( x_3-y_3)(x_3^3-y_3^3)\\
    &=\|X-Y\|^2-(x_2-y_2)(x_2^5-y_2^5)-( x_3-y_3)(x_3^3-y_3^3).\\
    &\leq \|X-Y\| ^2.
\end{align*}

In another hand, we also have
\begin{align*}
 \|f(t,X)-f(t,Y)\| ^2&= \|x_1-y_1; x_2-y_2-x_2^5+y_2^5;x_3-y_3-x_3^3+y_3^5\|\\
 &=  |x_1-y_1|^2+ |x_2-y_2-x_2^5+y_2^5|^2+|x_3-y_3-x_3^3+y_3^3|^2\\
 &\leq  |x_1-y_1|^2+2|x_2-y_2|^2+2|x_3-y_3|^2+ 2|-x_2^5+y_2^5|^2+2|-x_3^3+y_3^3|^2\\
& \leq c\|X-Y\|^2(1+\|X\|^k+\|Y\|^k )^2, ~c\geq 0, ~k\geq 4.
\end{align*}
We can  then conclude that the unique solution $X(\cdot)$  of equation \eqref{solu1}  exists. For the numerical simulation we  have taken $N=2^{18}, ~T=1,~ X_0=(10^{-2}; 0;10^{-2}),~ t_0=0$. The  strong error is evaluated with 300 samples. 
 In addition, we can also prove that the Assumption 2 (A2.3) is satisfied. Indeed
 $I_{3} +hM_1(t_n),~n\in\mathbb{N}$ is non singular matrix as
  \begin{align*}
    I_{3} +hM_1(t_n) = \begin{pmatrix}
1 & 0 &0  \\
	0 & 1& 0\\
	0&0&1
\end{pmatrix}+\begin{pmatrix}
0 & 0 &h  \\
	0 & 0& 0\\
	0&0&-h
\end{pmatrix}=\begin{pmatrix}
1 & 0 &-h  \\
	0 & 1& 0\\
	0&0&1+h
\end{pmatrix}
  \end{align*}
  This means that for all $h$ the matrix $I_{3} +hM_1(t_n),~n\in\mathbb{N}$ is non singular matrix and 
   $(I_{3} +hM_1(t_n))^{-1}=\begin{pmatrix}
1 & 0 &\frac{h}{h+1}  \\
	0 & 1& 0\\
	0&0&\frac{1}{1+h}
\end{pmatrix},~n\in\mathbb{N}$ finally 
$$|(I_{3} +hM_1(t_n))^{-1} |_1=\max_{i}\sum_j|a_{ij}|=\frac{1}{h+1}+\frac{h}{1+h}=1\leq \exp{(kh)},\,\,\,\, \forall h$$

 \begin{figure}[h]
	\centering
	\includegraphics[width=1\linewidth]{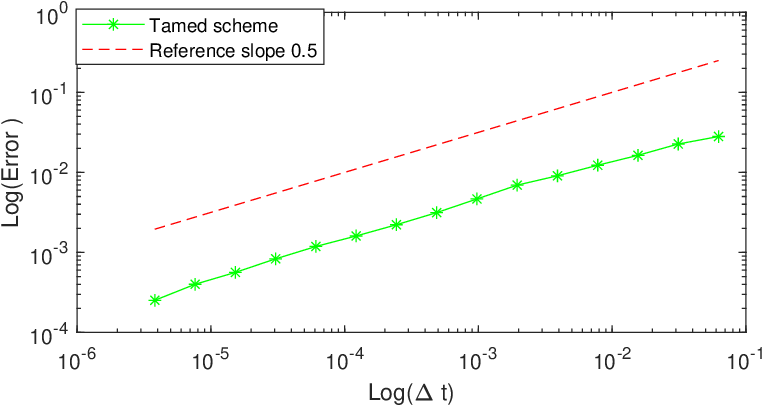}
	\caption{Strong convergence with our novel  semi-implicit tamed  scheme for SDAEs}
	\label{fig:eroorr}
\end{figure}
Figure \ref{fig:eroorr} shows the strong   convergence  in log scale with rate $\frac{1}{2}$. This confirms the theoretical result stated in Theorem \ref{theo4}.


\textbf{Funding:} Open access funding provided by Western Norway University Of Applied
Sciences Not applicable


\end{document}